\newtheorem{theorem}{Theorem}[section]
\newtheorem{proposition}{Proposition}[section]
\newtheorem{lemma}{Lemma}[section]
\newtheorem{definition}{Definition}[section]
\newtheorem{remark}{Remark}[section]
\numberwithin{equation}{section}
\begin{document}

\title{Homogenization of Smoluchowski-type equations with transmission 
boundary conditions}

\author{Bruno Franchi$^1$ and Silvia Lorenzani$^2$ }

\date{\em $^{1}$ Dipartimento di Matematica, Universit\`a di Bologna,
Piazza di Porta S. Donato 5, 40126 Bologna, Italy \\
$^{2}$ Dipartimento di Matematica, Politecnico di Milano, Piazza Leonardo da
Vinci 32, 20133 Milano, Italy}

\maketitle

\begin{abstract}
In this work, we prove a two-scale homogenization result for a set
of diffusion-coagulation Smoluchowski-type equations with transmission
boundary conditions. This system is meant to describe the aggregation and
diffusion of pathological tau proteins in the cerebral tissue, a process
associated with the onset and evolution of a large variety of tauopathies
(such as Alzheimer's disease). We prove the existence, positivity and
boundedness of solutions to the model equations derived at the microscale
(that is the scale of single neurons).
Then, we study the convergence of the homogenization process to the solution
of a macro-model asymptotically consistent with the microscopic one.

\end{abstract}

\par\noindent
\textbf{Keywords}:
Smoluchowski equation, transmission boundary conditions,
two-scale homogenization, tauopathies

\par\noindent
\textbf{MSC 2020:} 35K55, 80M40, 92B05

\section{Introduction}\label{sec1}

Let us consider a bounded open set
$\Omega$ in $\mathbb{R}^3$ with a smooth boundary $\partial \Omega$.
This domain is decomposed into long cylindrical cavities,
periodically distributed with period $\epsilon$, having generators parallel
to the $z$-axis.
More precisely, given a bounded domain $D$ in $\mathbb{R}^2$,
let us set $\Omega:=D \times ]0,L[$ with $x \in D$ and $z \in ]0,L[$.
We denote by $\Gamma_L:=\partial D \times ]0,L[$ the lateral boundary
and by $\Gamma_B:=\overline{D} \times \{0,L\}$ the upper
and lower sides of $\partial \Omega$.
Let $Y=[0,1] \times [0,1]$ be the unit periodicity cell in $\mathbb{R}^2$
having the paving property, i.e. the disjoint union of translated copies of
$Y$ can indeed cover the whole space.
Let $X$ be an open subset of $Y$
with a smooth boundary $R=\partial X$, such that $\overline {X} \subset 
\mathrm{Int}\, Y$, and $Z=Y \setminus X$.
Then, we define $G_{\epsilon}$ to be the set of all translated images of
$\epsilon \, \overline{X}$:

$$G_{\epsilon}:=\cup \bigg\{ \epsilon (k+\overline {X}) \mid \epsilon (k+
\overline {X}) \subset D, k \in \mathbb {Z}^2 \bigg\}$$
and $D_{\epsilon}=D \setminus G_{\epsilon}$, as well as

$$R_{\epsilon}:=\cup \bigg\{ \partial (\epsilon (k+\overline {X})) \mid
\epsilon (k+\overline {X}) \subset D, k \in \mathbb {Z}^2 \bigg\}.$$
Thus, the geometric structure of the domain in $\mathbb{R}^3$ is given by:
$\Pi_{\epsilon}:=G_{\epsilon} \times ]0,L[,$
$\Omega_{\epsilon}:=D_{\epsilon} \times ]0,L[,$ and
$\Gamma_{\epsilon}:=R_{\epsilon} \times ]0,L[.$

Throughout this paper, $\epsilon$ will denote the general term of a
sequence of positive reals which converges to zero.
Let us introduce two nonnegative vector-valued functions,
$u^{\epsilon}: [0,T] \times \Pi_{\epsilon} \rightarrow \mathbb{R}^M$,
$u^{\epsilon} (t,x,z)=(u_1^{\epsilon}, \ldots, u_M^{\epsilon})$ and
$v^{\epsilon}: [0,T] \times \Omega_{\epsilon} \rightarrow \mathbb{R}^M$,
$v^{\epsilon} (t,x,z)=(v_1^{\epsilon}, \ldots, v_M^{\epsilon})$, which
solve the following system of discrete Smoluchowski-type equations
\cite{Smo} with
transmission boundary conditions.

For $1 \leq m \leq M$ we have:

\begin{eqnarray} \label{2.1a}
\begin{cases}
\frac{\displaystyle \partial{u_m^{\epsilon}}}{\displaystyle \partial t}-
{\epsilon}^2 \, D_m \,
\Delta_x u_m^{\epsilon}-\tilde{D}_m \, \partial_z^2 u_m^{\epsilon} \\
=L_m(u^{\epsilon})+f_m^{\epsilon}(t,x,z)
 & \text{in } [0,T] \times \Pi_{\epsilon} \\

\\ 
\frac{\displaystyle \partial{v_m^{\epsilon}}}{\displaystyle \partial t}-
d_m \, \Delta v_m^{\epsilon}= N_m(v^{\epsilon})
 & \text{in } [0,T] \times \Omega_{\epsilon} \\

\\

\\
\epsilon \, D_m \, \nabla_x u_m^{\epsilon} 
\cdot \nu_{\epsilon}=
-c_m(x,z) (u_m^{\epsilon}-v_m^{\epsilon})_{+} & \text{on } 
[0,T] \times \Gamma_{\epsilon}  \\

\\
d_m \, \nabla_x v_m^{\epsilon} 
\cdot \nu_{\epsilon}={\epsilon}^2 \, D_m \, \nabla_x u_m^{\epsilon}
\cdot \nu_{\epsilon}
 & \text{on } 
[0,T] \times \Gamma_{\epsilon}  \\

\\
\nabla_x v_m^{\epsilon}
\cdot \nu_{\epsilon}=0 & \text{on } [0,T] \times 
\Gamma_L  \\

\\
\partial_z u_m^{\epsilon}=\partial_z v_m^{\epsilon}=0 &
\text{on } [0,T] \times
\Gamma_B  \\

\\
u_m^{\epsilon}(0,x,z)= U_m^{\epsilon}(x,z) & \text{in } \Pi_{\epsilon}
\\
v_m^{\epsilon}(0,x,z)= 0 & \text{in } \Omega_{\epsilon}
\end{cases}
\end{eqnarray}
where

\begin{equation} \label{2.2a}
L_1(u^{\epsilon})=-\sum_{j=1}^M a_{1,j} u_1^{\epsilon} u_j^{\epsilon}
\end{equation}

\begin{equation} \label{2.2b}
L_m(u^{\epsilon})=\frac{1}{2} \, \sum_{j=1}^{m-1} a_{j,m-j} \, 
u_j^{\epsilon} \, u_{m-j}^{\epsilon}
-\sum_{j=1}^M a_{m,j} u_m^{\epsilon} u_j^{\epsilon}
\; \; \; (1 < m < M)
\end{equation}

\begin{equation} \label{2.2c}
L_M(u^{\epsilon})= \frac{1}{2} \, \sum_{\substack{j+k \geq M \\ 
k< M \\ j<M}} 
a_{j,k} \, u_j^{\epsilon} \, u_k^{\epsilon} 
\end{equation}

\begin{equation} \label{2.3a}
N_1(v^{\epsilon})=-\sum_{j=1}^M b_{1,j} v_1^{\epsilon} v_j^{\epsilon}
\end{equation}

\begin{equation} \label{2.3b}
N_m(v^{\epsilon})=\frac{1}{2} \, \sum_{j=1}^{m-1} b_{j,m-j} \, v_j^{\epsilon} 
\, v_{m-j}^{\epsilon}
-\sum_{j=1}^M b_{m,j} v_m^{\epsilon} v_j^{\epsilon}
\; \; \; (1 < m < M)
\end{equation}

\begin{equation} \label{2.3c}
N_M(v^{\epsilon})=\frac{1}{2} \, \sum_{\substack{j+k \geq M 
\\ k< M \\ j<M}} 
b_{j,k} \, v_j^{\epsilon} \, v_k^{\epsilon}
\end{equation}

and 
$$
f_m^{\epsilon}(t,x,z)=
\begin{cases}
f^{\epsilon}(t,x,z) & \text{if } m=1 \\
0 & \text{if } 1 <m \leq M
\end{cases}
$$
$$
U_m^{\epsilon}(x,z)=
\begin{cases}
U_1^{\epsilon}(x,z)>0 & \text{if } m=1 \\
0 & \text{if } 1 <m \leq M
\end{cases}
$$
In the system of equations above, 
$\nu_{\epsilon}$ is the outer normal on $\Gamma_{\epsilon}$ with respect
to $\Omega_{\epsilon}$; $f^{\epsilon}(t,x,z) \in C^1([0,T] \times 
\Pi_{\epsilon})$, $c_i(x,z) \in L^{\infty}(\Gamma_{\epsilon}) 
(i=1, \ldots, M)$ and $U_1^{\epsilon}(x,z) \in L^{\infty}(\Pi_{\epsilon})$
are given positive functions.
In (\ref{2.1a}), the unknowns $u_m^{\epsilon}$ and
$v_m^{\epsilon}$ ($1 \leq m < M$) represent the concentration of
$m$-clusters, that is,
clusters consisting of $m$ identical elementary particles (monomers),
while $u_M^{\epsilon}$ and $v_M^{\epsilon}$ take into account aggregations of
more than $M-1$ monomers.
We assume that the only reaction allowing clusters to coalesce to form
larger clusters is a binary coagulation mechanism, while the approach
of two clusters leading to aggregation results only from a diffusion
process.
In particular, we indicate with $D_i$, $\tilde{D}_i$, $d_i$ ($1 \leq i \leq M$)
the positive diffusion coefficients of $i$-clusters in different regions of
the domain and along different directions.
The kinetic coefficients $a_{i,j}$ and $b_{i,j}$ represent a reaction in
which an ($i+j$)-cluster is formed from an $i$-cluster and a $j$-cluster.
Therefore, they can be interpreted as 'coagulation rates' and are
symmetric $a_{i,j}=a_{j,i} >0$, $b_{i,j}=b_{j,i} >0$ ($i,j=1, \ldots, M$).

Our main statement shows that it is possible to homogenize the system of
equations (\ref{2.1a}) as $\epsilon \rightarrow 0$.


\begin{theorem} \label{t6.5}

Let $v_m^{\epsilon}(t,x,z)$ and $u_m^{\epsilon}(t,x,z)$ ($1 \leq m \leq M$)
be a family of weak solutions to the system (\ref{2.1a})
(see Definition \ref{d2.1}).
The sequences $v_m^{\epsilon}$, $\nabla_x v_m^{\epsilon}$, $u_m^{\epsilon}$,
$\epsilon \nabla_x u_m^{\epsilon}$ ($1 \leq m \leq M$), two-scale converge
to: $v_m (t,x,z)$, $\nabla_x v_m (t,x,z)+\nabla_y \tilde{v}_m (t,x,y,z)$,
$u_m(t,x,y,z)$, $\nabla_y u_m(t,x,y,z)$, respectively.
The limiting functions
$v_m \in L^2(0,T; H^1(\Omega)) \bigcap H^1(0,T; L^2(\Omega))$,
$\tilde{v}_m \in L^2([0,T] \times \Omega; H^1_{\sharp}(Y)/\mathbb{R})$,
$u_m \in L^2([0,T] \times \Omega; H^1_{\sharp}(Y)) \bigcap H^1(0,T; L^2(\Omega
\times Y))$
are the unique solutions of the following two-scale homogenized systems.

For $1 \leq m \leq M$ we have:

\begin{equation} \label{hom2.1} 
\begin{split}
&\vert Z \vert \, \frac{\displaystyle \partial v_m}
{\displaystyle \partial t}(t,x,z)
-\text{div}_x \bigg[ d_m \, A \, 
\nabla_x v_m(t,x,z) \bigg]-
d_m \, \vert Z \vert \, \partial_z^2 v_m(t,x,z) \\
&= \displaystyle \int_{\Gamma} c_m(x,z) \, (u_m(t,x,y,z)-
v_m(t,x,z))_+ \, 
d\sigma(y) +\vert Z \vert \,N_m(v) 
\, \, \, 
\; \; \; \; \; \; \; \; \text{ in} \, \, [0,T] \times \Omega
\end{split}
\end{equation}

\begin{equation} \label{hom2.2}
[ A \, \nabla_x v_m (t,x,z)] \cdot \nu=0 \; \; \; \; \; \;  
\; \; \; \; \; \; \; \; \; \; \; \; \; \; \; \; \; \; \; \; \; \; \; \;
\; \; \; \; \; \; \; \; \; \; \; \; \; 
\text{ on} \, \, 
[0,T] \times \Gamma_{L}
\end{equation}

\begin{equation} \label{hom2.3}
\partial_z v_m(t,x,z)=0 \, \, \, \qquad \; \; \; \; \; \; \; \;  \; \; \; 
\; \; \; \; \; \; \; \; \; \; \; \; 
\; \; \; \; \; \; \; \; \; \; \; \; \; \; \; \; \; \; \; \; \; \; \; \;
\text{ on} \, \, [0,T] \times \Gamma_B
\end{equation}

\begin{equation} \label{hom2.4}
v_m (t=0,x,z)=0 \, \, \, \qquad \; \; \; \; \; \; \; \;  \; \; \; \; \; \; \; \;
\; \; \; \; \; \; \; \; \; \; \; \; \; \; \; \; \; \; \; \; \; \; \; \;
\; \; \; \; \; \; \; \; \; \; \; 
\text{ in} \, \, \Omega
\end{equation}
where

$$N_1(v)=-\sum_{j=1}^M b_{1,j} v_1(t, x, z) \, v_j(t, x, z)
$$
$$N_m(v)=\frac{1}{2} \, \sum_{j=1}^{m-1} b_{j,m-j} \, v_j(t, x, z) 
\, v_{m-j}(t, x, z)
-\sum_{j=1}^M b_{m,j} v_m(t, x, z) \, v_j(t, x, z)
\; \; \; (1 < m < M)
$$
$$N_M(v)=\frac{1}{2} \, \sum_{\substack{j+k \geq M 
\\ k< M \\ j<M}} 
b_{j,k} \, v_j(t, x, z) \, v_k(t, x, z)
$$

In (\ref{hom2.1}) and (\ref{hom2.2}), $A$ is a matrix with constant 
coefficients defined by

\begin{equation} \label{hom3.10}
A_{ij}=\displaystyle \int_{Z} (\nabla_y  w_i+ \hat{e}_i) \cdot
(\nabla_y w_j+ \hat{e}_j) \, dy
\end{equation}
with $\hat{e}_i$ being the $i$th unit vector in $\mathbb{R}^3$, and
$(w_i)_{1 \leq i \leq 3}$ the family of solutions of the cell problem

\begin{eqnarray} \label{hom3.11}
\begin{cases}
-\text{div}_y [\nabla_y w_i+ \hat{e}_i]=0 \, \, \, 
\qquad \; \; \; \; \; 
\text{ in} \, \, Z \\
(\nabla_y w_i+\hat{e}_i) \cdot \nu=0 \, \, \, \qquad \; \; \; \; 
\; \; \; \;
\text{ on} \, \, \Gamma \\
y \rightarrow w_i(y) \; \; \; \; \; Y-\text{periodic}
\end{cases}
\end{eqnarray}
appearing in the limiting function

\begin{equation} \label{hom3.9}
{\tilde v}_m(t,x,y,z)=\sum_{i=1}^3 \, w_i (y) \, 
\frac{\displaystyle \partial v_m}{\displaystyle \partial x_i}(t,x,z)
\; \; \; \; (1 \leq m \leq M)
\end{equation}
 
Furthermore, for $1 \leq m \leq M$ we have: 

\begin{equation} \label{hom2.5} 
\begin{split}
&\frac{\displaystyle \partial u_m}{\displaystyle \partial t}(t,x,y,z)-
D_m \, \triangle_y u_m (t,x,y,z)
-\tilde{D}_m \, \partial_z^2 u_m (t,x,y,z) \\
&=L_m(u)+f_m(t,x,y,z) 
\; \; \; \; \; \; \; \; \; \; \; \; \; \; \; \; \; \; \;
t>0, \; \; (x,z) \in \Omega, \; \; y \in X
\end{split}
\end{equation}

\begin{equation} \label{hom2.6}
\begin{split}
&D_m \, \nabla_y  u_m (t,x,y,z) \cdot \nu \\
&=-c_m(x,z) \,
(u_m (t,x,y,z)-v_m (t,x,z))_+  \; \; \; \; \; \; \;
 \;  t>0, \; \;
(x,z) \in \Omega, \; \; y \in \Gamma
\end{split}
\end{equation}

\begin{equation} \label{hom2.7}
\partial_z u_m (t,x,y,z)=0\, \, \, \; \; \; \; \; \; \; \; \; \; \; \;
\; \; \; \; \;
\qquad t>0, \; \;
(x,z) \in \overline{D} \times \{0,L \}, \; \; y \in X
\end{equation}

\begin{equation} \label{hom2.8}
u_m (t=0,x,y,z)=U_m(x,y,z) \, \, \, \qquad \; \; \; \; \; 
\; \; \; \; \; \; \; \; \; \; \; \; \; \; \; \; \; \; \;
\; \; \; \; 
(x,z) \in \Omega, \; \; y \in X
\end{equation}
where

$$L_1(u)=-\sum_{j=1}^M a_{1,j} u_1(t,x,y,z) \, u_j(t,x,y,z)
$$
$$L_m(u)=\frac{1}{2} \, \sum_{j=1}^{m-1} a_{j,m-j} \, 
u_j(t,x,y,z) \, u_{m-j}(t,x,y,z)
-\sum_{j=1}^M a_{m,j} u_m(t,x,y,z) \, u_j(t,x,y,z)
\; \; \; (1 < m < M)
$$
$$L_M(u)= \frac{1}{2} \, \sum_{\substack{j+k \geq M \\ 
k< M \\ j<M}} 
a_{j,k} \, u_j(t,x,y,z) \, u_k(t,x,y,z) 
$$
and

$$
f_m(t,x,y,z)=
\begin{cases}
f(t,x,y,z) & \text{if } m=1 \\
0 & \text{if } 1 <m \leq M
\end{cases}
$$
$$
U_m(x,y,z)=
\begin{cases}
U_1(x,y,z)>0 & \text{if } m=1 \\
0 & \text{if } 1 <m \leq M
\end{cases}
$$

\end{theorem}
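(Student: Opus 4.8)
The plan is to prove Theorem \ref{t6.5} by the two-scale convergence method of Nguetseng and Allaire, proceeding in five stages: a priori estimates, extraction of two-scale limits, identification of the limit equations via well-chosen oscillating test functions, derivation of the cell problem, and finally uniqueness of the homogenized system. Throughout I assume that the existence, positivity and boundedness of the microscopic weak solutions $u_m^\epsilon$, $v_m^\epsilon$ (stated earlier in the excerpt) are available, together with the uniform $L^\infty$ bounds they provide.

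\textbf{Step 1: uniform a priori estimates.}
First I would test the $v_m^\epsilon$-equation with $v_m^\epsilon$ itself and the $u_m^\epsilon$-equation with $u_m^\epsilon$, sum over $m$, integrate by parts, and use the transmission conditions on $\Gamma_\epsilon$ to handle the coupling terms; the sign of $(u_m^\epsilon-v_m^\epsilon)_+$ and the structure of $L_m$, $N_m$ (which are quadratic but controlled by the $L^\infty$ bounds) give, via Gronwall, the estimates
\begin{equation*}
\|v_m^\epsilon\|_{L^2(0,T;H^1(\Omega_\epsilon))}+\|\partial_t v_m^\epsilon\|_{L^2(0,T;L^2(\Omega_\epsilon))}\le C,
\qquad
\|u_m^\epsilon\|_{L^2(0,T;L^2(\Pi_\epsilon))}+\epsilon\|\nabla_x u_m^\epsilon\|_{L^2(0,T;L^2(\Pi_\epsilon))}\le C,
\end{equation*}
with $C$ independent of $\epsilon$, together with the analogous bound on $\partial_t u_m^\epsilon$ and $\tilde D_m\partial_z u_m^\epsilon$, $\tilde D_m\partial_z v_m^\epsilon$. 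The scaling $\epsilon^2 D_m\Delta_x$ in the $u$-equation is exactly what makes $\epsilon\nabla_x u_m^\epsilon$ bounded, which is the correct quantity to two-scale converge. I would also record the standard trace/extension estimates on the perforated domain $\Omega_\epsilon$ so that $v_m^\epsilon$ may be regarded (after extension) as bounded in $L^2(0,T;H^1(\Omega))$, uniformly in $\epsilon$.

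\textbf{Step 2: two-scale limits.}
By the compactness theorem for two-scale convergence, up to a subsequence $v_m^\epsilon\rightharpoonup v_m(t,x,z)$, $\nabla_x v_m^\epsilon\rightharpoonup \nabla_x v_m+\nabla_y\tilde v_m$ with $\tilde v_m\in L^2([0,T]\times\Omega;H^1_\sharp(Y)/\mathbb{R})$, $u_m^\epsilon\rightharpoonup u_m(t,x,y,z)$ and $\epsilon\nabla_x u_m^\epsilon\rightharpoonup\nabla_y u_m$ with $u_m\in L^2([0,T]\times\Omega;H^1_\sharp(Y))$; the $z$-derivatives pass to the limit in the usual strong/weak sense since there is no oscillation in $z$. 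For the surface terms I would use two-scale convergence of traces on the periodic hypersurface $\Gamma_\epsilon$: $\epsilon\int_{\Gamma_\epsilon}\varphi(t,x,z,x/\epsilon)\,d\sigma \to \int_\Omega\int_\Gamma\varphi\,d\sigma(y)\,dxdz$, which is where the boundary integral $\int_\Gamma c_m(u_m-v_m)_+\,d\sigma(y)$ in \eqref{hom2.1} and the Robin condition \eqref{hom2.6} originate. To pass to the limit in the quadratic reaction terms I need \emph{strong} two-scale (or strong $L^2$) convergence of $u_m^\epsilon$ and $v_m^\epsilon$; this I would obtain from the time-derivative bounds plus an Aubin--Lions/Simon argument adapted to two-scale convergence (or from energy-equality arguments), so that products converge to products of the limits.

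\textbf{Step 3: identification of the homogenized equations.}
For the macroscopic $v_m$-equation I would use test functions of the form $\phi(t,x,z)+\epsilon\psi(t,x,z)\chi(x/\epsilon)$ with $\phi\in C^\infty([0,T]\times\overline\Omega)$, $\chi\in C^\infty_\sharp(Y)$, plug into the weak formulation on $\Omega_\epsilon$, and pass to the two-scale limit. Choosing $\psi\equiv0$ yields the macroscopic balance \eqref{hom2.1} with the still-unknown corrector contribution $\int_Z(\nabla_x v_m+\nabla_y\tilde v_m)\cdot\nabla_x\phi$; choosing $\phi\equiv0$ gives the cell equation
\begin{equation*}
\int_Z(\nabla_x v_m+\nabla_y\tilde v_m)\cdot\nabla_y\chi\,dy=0\qquad\text{for all }\chi\in H^1_\sharp(Y),
\end{equation*}
whose solution is $\tilde v_m=\sum_{i=1}^3 w_i\,\partial_{x_i}v_m$ with $w_i$ solving the cell problem \eqref{hom3.11}; substituting back produces the constant effective matrix $A$ of \eqref{hom3.10}. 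For the microscopic $u_m$-equation I would instead test with $\epsilon\,\theta(t,x,z)\,\eta(x/\epsilon)$, $\eta\in C^\infty_\sharp(Y)$ supported near $\overline X$, divide by $\epsilon$... the $\epsilon^2D_m\Delta_x$ term becomes $D_m\Delta_y u_m$, the reaction and source pass to the limit by Step 2, and the transmission term $-c_m(u_m^\epsilon-v_m^\epsilon)_+$ on $\Gamma_\epsilon$ becomes the Robin condition \eqref{hom2.6}; the boundary/initial conditions \eqref{hom2.7}, \eqref{hom2.8} follow from the corresponding microscopic ones and the assumed two-scale convergence of the data $U_1^\epsilon\rightharpoonup U_1$, $f^\epsilon\rightharpoonup f$.

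\textbf{Step 4: uniqueness, and conclusion.}
Uniqueness of the coupled two-scale system I would establish by a Gronwall argument on the difference of two solutions: subtract the equations, test the $v$-difference with itself on $\Omega$ and the $u$-difference with itself on $\Omega\times X$, use that the map $s\mapsto s_+$ is $1$-Lipschitz and monotone so the transmission couplings have a favorable sign (up to terms absorbable by Young's inequality), and control the quadratic reaction differences by the a priori $L^\infty$ bounds on the limits; coercivity of $A$ (which follows from \eqref{hom3.10} since $\xi^\top A\xi=\int_Z|\nabla_y(w\cdot\xi)+\xi|^2\,dy\ge c|\xi|^2$ by the Poincaré--Wirtinger inequality on $Z$) gives the needed ellipticity of the macroscopic diffusion. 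Once uniqueness is known, the whole sequence (not merely a subsequence) converges, which completes the proof.

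\textbf{Main obstacle.}
I expect the delicate point to be the passage to the limit in the \emph{nonlinear} terms, i.e. the quadratic Smoluchowski coagulation kernels $L_m$, $N_m$ and, above all, the nonsmooth transmission nonlinearity $(u_m^\epsilon-v_m^\epsilon)_+$ living on the oscillating surface $\Gamma_\epsilon$. Weak two-scale convergence is not enough for products; I must upgrade to strong convergence of $u_m^\epsilon$ and $v_m^\epsilon$ (and of their traces on $\Gamma_\epsilon$) via compactness in time, and then argue that $(u_m^\epsilon-v_m^\epsilon)_+\rightharpoonup(u_m-v_m)_+$ in the two-scale sense on $\Gamma_\epsilon$ using the continuity of the positive-part map together with the trace two-scale compactness; carrying this through rigorously on the perforated geometry, uniformly in $\epsilon$, is the technical heart of the argument.
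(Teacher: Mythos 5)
Your proposal follows essentially the same route as the paper: uniform a priori estimates (the paper's Lemma 3.2) together with the extension operators on the perforated domains, two-scale compactness for $v_m^{\epsilon}$, $\nabla_x v_m^{\epsilon}$, $u_m^{\epsilon}$, $\epsilon\nabla_x u_m^{\epsilon}$ and for traces on $\Gamma_{\epsilon}$, then oscillating test functions $\phi^0(t,x,z)+\epsilon\,\tilde\phi(t,x,x/\epsilon,z)$ in the $v$-equation and purely oscillating $\tilde\psi(t,x,x/\epsilon,z)$ in the $u$-equation, yielding the cell problem, the effective matrix $A$, and the two limit systems. Your Step 4 (a Gronwall uniqueness argument for the coupled two-scale system, which upgrades subsequence to whole-sequence convergence) is a worthwhile addition that the paper does not spell out, and your flagged obstacle --- strong convergence needed for the quadratic terms and for $(u_m^{\epsilon}-v_m^{\epsilon})_+$ on $\Gamma_{\epsilon}$ --- is exactly the point the paper treats via the strong $C^0([0,T];L^2(\Omega))$ convergence of $v_m^{\epsilon}$ and the two-scale convergence of traces.
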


\subsection{Motivation}\label{subsec1.1}

The system of equations in (\ref{2.1a}), known under the generic name
of {\sl discrete Smoluchowski equations with diffusion}, is meant to model the
aggregation and diffusion of the pathological tau protein in the
brain, a process associated with the development of a large
variety of cerebral diseases called tauopathies.
Indeed, pathological accumulations of hyperphosphorylated tau protein 
aggregates,
known as neurofibrillary tangles (NFTs), are detected in several
neurodegenerative tauopathies, including Alzheimer's disease (AD)
\cite{GS2, GL, ILGG, PDL}.
Tau is a highly soluble, natively unfolded protein which is predominantly
located in the axons of neurons of the central nervous system.
Here, its physiological function is to support assembly and stabilization
of axonal microtubules.
Under pathological conditions, tau can assume abnormal conformations,
due to two transformations: hyperphosphorylation and misfolding.
In particular, hyperphosphorylation has a negative impact on the
biological function of tau proteins since it inhibits the binding to
microtubules, compromising their stabilization and axonal transport and
promotes self-aggregation.
Thus, misfolded tau monomers constitute the building unit for the formation of
oligomers which
in turn lead to highly structured and insoluble fibrils.
For many years, cell autonomous mechanisms were believed to be responsible
for the evolution of neurodegenerative diseases, implying that the same
aggregation events occur independently in different brain cells.
However, accumulating evidence now demonstrates that the progression of tau
pathology reflects cell-to-cell propagation of the disease, achieved through
the release of tau into the extracellular space and the uptake by surrounding
healthy neurons \cite{FSJGK, Yam}.
Extracellular tau then seeds physiological tau in the recipient cells to
propagate the pathological process through neural pathways made by
bundles of axons called tracts.
This mechanism, often referred to as 'prion-like' propagation of tau
pathology, has led to the idea that extracellular tau could be a novel
therapeutic target to halt the spread of the disease \cite{PLBMMG}.
For this reason, in the present work, we focus on a model which describes
in a simplified way the tau diffusion along tracts starting from the
microscopic release and uptake mechanism.

In this context, the set $\Pi_{\epsilon}$ represents a bundle of axons
in the white matter (a tract), while the
domain $\Omega_{\epsilon}$ indicates the extracellular region filled by
cerebrospinal fluid.
The variables $u_m^{\epsilon} \geq 0$ and
$v_m^{\epsilon} \geq 0$ refer to the concentration of $m$-clusters of
hyperphosphorylated tau spreading within the neuronal axons (represented
by the long cylindrical cavities) and in extracellular space, respectively.
Concerning the diffusion process, the choice made in our model to define
two coefficients reflects the biological properties of the tau protein
that diffuses differently in extracellular space and in the axon \cite{GS2}.
Moreover, within the neuronal axon, we further distinguish between two
different diffusion coefficients, since tau propagates preferentially along
the $z$-axis of the cylinder that represents the axon.
The initial condition $U_1^{\epsilon}(x,z)$, given at $t=0$ by the
concentration of monomers diffusing within the axons, represents the amount
of endogenous misfolded tau protein,
while the source term $f^{\epsilon}(t,x,z)$, in the system
(\ref{2.1a}) for $m=1$, indicates the production of 
hyperphosphorylated tau monomers.

\subsection{Background of this work}\label{subsec1.2}

There is a large literature related to the use of the Smoluchowski equation
in various physical contexts (e.g., 
\cite{Wrz, LM, LM2, FL, Rez, AM}),
but only a few works
concerning its application in the biomedical field \cite{CIA, BFRT}.
Recently, the important role of the Smoluchowski equations in modelling
at different scales the evolution of neurodegenerative diseases, such as
AD, has been investigated in 
\cite{MP, AFMT, BFMTT, BFRT, RTGCCRLF, FHT, FL1, FHL} .
In fact, the present work is part of a broader research effort carried 
on by various 
groups of researchers with diverse collaborations on mathematical models of 
the progression of AD, and represents an initial bridge 
between microscopic models of tau diffusion developed in biology 
and macroscopic mathematical 
models based on graph theory.
To this end, the homogenization technique, introduced by the mathematicians
in the seventies to carry out a sort of averaging process on the solutions
of partial differential equations with rapidly varying coefficients or
describing media with microstructures, has been applied
\cite{CD, CP, dalmaso, GAGP, Jos, GNRP, CK}.

It is nowadays generally accepted that tau protein, in synergetic
combination with another protein, the so-called $\beta$-amyloid peptide, 
plays a key role in the
development of AD (see \cite{BH}).
We refer for instance to \cite{BFRT} for a discussion on macroscopic
mathematical modeling of this interaction. Here, 
our interest is focused on the tau protein 
that diffuses through the neural pathway, whereas we ignore
deliberately the action of the $\beta$-amyloid.

Unlike the approach proposed for instance in \cite{RTGCCRLF}, \cite{BFRT}, 
where the modeling of
 tau coa\-gu\-la\-tion-diffusion processes has been carried out
on a large scale, that is the scale of the neural network characterized
by the connectivity of different regions through bundles of axons (tracts),
in this paper only the mesoscopic dynamics within a portion of tract has
been investigated.
Starting from the derivation of model equations valid at the microscale,
by using the so-called {\sl two-scale homogenization} technique 
we have proved that the solution
two-scale converges to the solution of a macromodel asymptotically consistent
with the original one.
The notion of two-scale homogenization has been first introduced by 
Nguetseng \cite{Ngu} and
Allaire \cite{All} in the deterministic periodic setting, and later
generalized to the stochastic framework by Zhikov and Piatnitsky \cite{ZP}.
Unlike other homogenization techniques (see \cite{dalmaso}, \cite{CD} for a
review),
the two-scale convergence method is self-contained in that, in a single
step, one can derive the homogenized equations and prove the convergence
of the sequence of solutions to the problem at hand.

To stress the novelty of the present paper,
it is worth noting that two-scale homogenization techniques have been 
already used by the authors to pass from microscopic to macroscopic
model of the diffusion of toxic proteins in the cerebral parenchyma affected
by AD, but in utterly different biological perspectives and
geometries.
Indeed, in \cite{FL1} and \cite{FHL}, the authors aimed to describe
production, aggregation and diffusion of $\beta$-amyloid peptide 
in the cerebral
tissue (macroscopic scale), a process associated with the development of
AD, starting from the derivation of a model at the single
neuron level (microscopic scale). Thus, the present paper differs from
 our previous works  both for the biological meaning
 and, consequently, 
for the geometry of the problem and the
boundary conditions introduced in order to take into account the
peculiarities of tau protein propagation.

There is a large literature devoted to the study of transmission boundary
conditions somehow akin to those of \eqref{2.1a}, especially in the 
framework of porous media \cite{JMN}.
The main results in this respect, which are relevant to our work, can be
found in \cite{HJM, NEUSS, KAM}.
In particular, in \cite{HJM}, it is assumed that the porous medium is
composed of periodically arranged cubic cells of size $\epsilon$, split up
into a solid part (a ball surrounded by semi-permeable membranes) and a
fluid part.
In this setting, the diffusion and reactions of chemical species in the
fluid and in the solid part are studied, while transmission boundary
conditions are imposed on the interface.
The same geometry is considered also in \cite{KAM}, where deposition effects
under the influence of thermal gradients are analyzed.
The model takes into account the motion of populations of colloidal particles
dissolved in the water interacting together via Smoluchowski coagulation
terms.
The colloidal matter cannot penetrate the solid grain boundary, but it
deposits there. This process is again described by transmission boundary
conditions.
A domain decomposed into long cylindrical cavities periodically distributed
has been defined in \cite{NEUSS} to model a porous medium consisting of a
fluid part and solid bars.
Chemical substances, dissolved in the fluid, are transported by diffusion
and adsorbed on the surface of the bars (through transmission boundary
conditions) where chemical reactions take place.

\subsection{Outline of the paper}\label{subsec1.3}

The paper is organized as follows.
In Section \ref{sec2} we prove the existence of weak solutions to the
system of Smoluchowski-type equations (\ref{2.1a}), while
Section \ref{sec3} is devoted to the proof of their positivity and
boundedness.
A priori estimates on the derivatives of the solutions are also obtained and
reported at the end of Section \ref{sec3}.
In Section \ref{sec4} we study the convergence of the homogenization process
and prove our main result concerning the two-scale limit of the solutions
to the set of equations (\ref{2.1a}).
Finally, in the Appendices we recall some basic Theorems related to
functional analysis and on the two-scale convergence method.

\section{Existence of solutions}\label{sec2}

Let us consider the following truncation of the nonlinear terms
in the Smoluchowski-type equations (\ref{2.1a}) for $1 \leq i \leq M$ 
\cite{KAM}:

\begin{equation} \label{2.4}
L_i^{\tilde M}(u^{\epsilon}):=L_i(\sigma_{\tilde M}(u_1^{\epsilon}),
\sigma_{\tilde M}(u_2^{\epsilon}), \dots, \sigma_{\tilde M}(u_M^{\epsilon}))
\end{equation}

\begin{equation} \label{2.5}
N_i^{\tilde M}(v^{\epsilon}):=N_i(\sigma_{\tilde M}(v_1^{\epsilon}),
\sigma_{\tilde M}(v_2^{\epsilon}), \dots, \sigma_{\tilde M}(v_M^{\epsilon}))
\end{equation}
where

\begin{equation} \label{2.6}
\sigma_{\tilde M}(s):=\begin{cases}
0, \; \; \; \; s<0 \\
s, \; \; \; \; s \in [0, {\tilde M}] \\
{\tilde M}, \; \; \; \; s>{\tilde M}
\end{cases}
\end{equation}
with ${\tilde M}>0$ being a fixed threshold.
If ${\tilde M}$ is large enough, the estimates derived later in this paper
will give bounds that will remain below ${\tilde M}$.
This means that the results obtained in the following hold also for the
uncutted coagulation terms.

\begin{definition} \label{d2.1}
The functions $u_i^{\epsilon} \in H^1 ([0,T]; L^2(\Pi_{\epsilon})) 
\cap L^{\infty} ([0,T]; H^1(\Pi_{\epsilon}))$
and  \\
$v_i^{\epsilon} \in H^1 ([0,T]; L^2(\Omega_{\epsilon})) 
\cap L^{\infty} ([0,T]; H^1(\Omega_{\epsilon}))$
($1\leq i \leq M$)
are solutions to
the problem (\ref{2.1a}) if the following relations hold, a.e.
in $[0,T]$ and for a fixed value of $\epsilon >0$.

If $1 \leq i \leq M$:

\begin{eqnarray} \label{2.7a} \nonumber
&\displaystyle \int_{\Pi_{\epsilon}} \partial_t u_i^{\epsilon} \, \psi_i
\, dx \, dz+
\epsilon^2 \displaystyle \int_{\Pi_{\epsilon}} D_i 
\nabla_x u_i^{\epsilon} \cdot \nabla_x \psi_i \, dx \, dz+
\displaystyle \int_{\Pi_{\epsilon}} \tilde{D}_i \, \partial_z u_i^{\epsilon} 
\cdot \partial_z \psi_i \, dx \, dz\\ \nonumber
&+\epsilon \displaystyle \int_{\Gamma_{\epsilon}} c_i(x,z) 
(u_i^{\epsilon}-v_i^{\epsilon})_{+} \, \psi_i \, d\sigma_{\epsilon}  
=\displaystyle \int_{\Pi_{\epsilon}} L_i^{\tilde M}(u^{\epsilon}) \, \psi_i
\, dx \, dz \\
&+\displaystyle \int_{\Pi_{\epsilon}} f_i^{\epsilon}(t,x,z) \, \psi_i
\, dx \, dz
\end{eqnarray}
for all $\psi_i \in H^1(\Pi_{\epsilon})$, and

\begin{eqnarray} \label{2.8a}  \nonumber
&\displaystyle \int_{\Omega_{\epsilon}} \partial_t v_i^{\epsilon} \, \phi_i
\, dx \, dz+
\displaystyle \int_{\Omega_{\epsilon}} d_i 
\nabla v_i^{\epsilon} \cdot \nabla \phi_i \, dx \, dz-
\epsilon \displaystyle \int_{\Gamma_{\epsilon}} c_i(x,z) 
(u_i^{\epsilon}-v_i^{\epsilon})_{+} \, \phi_i  \, d\sigma_{\epsilon}\\  
&=\displaystyle \int_{\Omega_{\epsilon}} N_i^{\tilde M}(v^{\epsilon}) \, 
\phi_i \, dx \, dz
\end{eqnarray}
for all $\phi_i \in H^1(\Omega_{\epsilon})$, along with the initial conditions
$u_i^{\epsilon}(0,x,z)$  and $v_i^{\epsilon}(0,x,z)$.

\end{definition}

\begin{remark}
In Eqs. (\ref{2.7a})-(\ref{2.8a}) the integrals over the boundary
$\Gamma_{\epsilon}$ are well defined thanks to the regularity of the
functions $u_i^{\epsilon}$ and $v_i^{\epsilon}$, as stated in
Definition \ref{d2.1}, and the existence of the interpolation-trace
inequality given by Eq.(\ref{A.7}) in Appendix A.
With an abuse of notation, in Eqs. (\ref{2.7a})-(\ref{2.8a}) we have indicated
with $u_i^{\epsilon}$ and $v_i^{\epsilon}$ also the trace of these functions.

\end{remark}

\begin{lemma} \label{l4.1}
For a given small $\epsilon >0$,
the system (\ref{2.1a}) has a solution \\
$u_i^{\epsilon} \in  H^1 ([0,T]; L^2(\Pi_{\epsilon})) \cap L^{\infty} ([0,T]; 
H^1(\Pi_{\epsilon}))$
and
$v_i^{\epsilon} \in H^1 ([0,T]; L^2(\Omega_{\epsilon})) \cap L^{\infty} ([0,T]; 
H^1(\Omega_{\epsilon}))$ ($1 \leq i \leq M$)
in the sense of the Definition \ref{d2.1}.
\end{lemma}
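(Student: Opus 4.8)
The plan is to construct the solution by a Galerkin-type fixed point argument for the truncated system, combined with the standard parabolic theory for the (now globally Lipschitz) truncated reaction terms $L_i^{\tilde M}$ and $N_i^{\tilde M}$. Concretely, I would fix $\epsilon>0$ and set up a fixed point map on the space $\bigl(L^2([0,T]\times\Pi_\epsilon)\bigr)^M\times\bigl(L^2([0,T]\times\Omega_\epsilon)\bigr)^M$: given a candidate pair $(\bar u,\bar v)$, one freezes the nonlinearities $L_i^{\tilde M}(\bar u)$, $N_i^{\tilde M}(\bar v)$ and the transmission coupling term $(\bar u_i-\bar v_i)_+$, and solves the resulting \emph{linear} decoupled parabolic problems for $(u_i^\epsilon,v_i^\epsilon)$. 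Because the coupling through $\Gamma_\epsilon$ is still linear-looking only after freezing, I would in fact keep the transmission terms $(u_i^\epsilon-v_i^\epsilon)_+$ implicit and treat the coupled pair $(u_i^\epsilon,v_i^\epsilon)$ as the unknown of a single linear (in the unknown) parabolic transmission problem with a monotone boundary term — existence for that subproblem follows from the Lions–Stampacchia / Galerkin machinery, using that $s\mapsto s_+$ is monotone and Lipschitz.

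The key steps, in order, are: (1) \emph{Well-posedness of the linearized transmission problem.} For fixed right-hand sides $g_i\in L^2([0,T]\times\Pi_\epsilon)$, $h_i\in L^2([0,T]\times\Omega_\epsilon)$, solve the coupled system for $(u_i^\epsilon,v_i^\epsilon)$ with the boundary term $\mp c_i(u_i^\epsilon-v_i^\epsilon)_+$ on $\Gamma_\epsilon$ via a Faedo–Galerkin approximation in $H^1(\Pi_\epsilon)\times H^1(\Omega_\epsilon)$; the bilinear form plus monotone boundary perturbation is coercive on the product space (the $\mp$ signs make the boundary contributions combine to $+\int_{\Gamma_\epsilon}c_i|(u_i^\epsilon-v_i^\epsilon)_+|^2\,d\sigma_\epsilon\ge0$ when testing with $(u_i^\epsilon,v_i^\epsilon)$), which gives $L^\infty([0,T];L^2)\cap L^2([0,T];H^1)$ bounds, and then testing with $\partial_t u_i^\epsilon$, $\partial_t v_i^\epsilon$ upgrades to the regularity in Definition \ref{d2.1}. (2) \emph{A priori bound for the fixed point map.} Since $|\sigma_{\tilde M}|\le\tilde M$, the truncated terms $L_i^{\tilde M},N_i^{\tilde M}$ are bounded in $L^\infty$ uniformly, so the energy estimate from step (1) gives a ball in the relevant function space that the map sends into itself. (3) \emph{Compactness and continuity.} The $H^1([0,T];L^2)\cap L^2([0,T];H^1)$ bound and Aubin–Lions give compactness in $L^2([0,T]\times\Pi_\epsilon)\times L^2([0,T]\times\Omega_\epsilon)$; continuity of the map follows from the Lipschitz continuity of $\sigma_{\tilde M}$ and of $s\mapsto s_+$, together with the interpolation-trace inequality (\ref{A.7}) to control the boundary terms under $L^2$-convergence in the bulk. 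Then Schauder's fixed point theorem yields a solution of the truncated system, which is exactly a solution in the sense of Definition \ref{d2.1}.

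A couple of technical points deserve care. The handling of the trace terms on $\Gamma_\epsilon$ is delicate: convergence of $(u_i^\epsilon-v_i^\epsilon)_+$ restricted to $\Gamma_\epsilon$ must be justified, and here the interpolation-trace inequality (\ref{A.7}) mentioned in the Remark is the essential tool — it lets one bound $\|w\|_{L^2(\Gamma_\epsilon)}$ by $\|w\|_{L^2}^{1/2}\|w\|_{H^1}^{1/2}$ (up to constants), so that $L^2$-in-bulk plus bounded-in-$H^1$ gives strong trace convergence along a subsequence. One should also verify the cascade structure is respected: $L_i^{\tilde M}$ depends only on $u_1^\epsilon,\dots,u_i^\epsilon$, so one could alternatively solve the equations successively in $i$ rather than as a full fixed point, simplifying the argument (solve for $(u_1^\epsilon,v_1^\epsilon)$ first, then feed into the $m=2$ equations, etc.), since the only genuinely nonlinear self-coupling in the $m$-th equation is the boundary term $(u_m^\epsilon-v_m^\epsilon)_+$, which is monotone. \textbf{The main obstacle} I anticipate is handling the nonlinear monotone boundary coupling between the two domains simultaneously with the passage to the limit in the Galerkin scheme: one must show that the monotone boundary operator passes to the limit, which requires either a Minty-type monotonicity argument on $\Gamma_\epsilon$ or, more simply, strong trace convergence via (\ref{A.7}) — getting the function spaces in Definition \ref{d2.1} to be exactly the right ones for that trace inequality to apply uniformly is the crux.
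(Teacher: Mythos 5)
Your main route is viable but genuinely different from the paper's. The paper does a one-level Faedo--Galerkin approximation of the full truncated nonlinear system: it expands $u_{i,n}^{\epsilon}$, $v_{i,n}^{\epsilon}$ in orthonormal bases of $H^1(\Pi_{\epsilon})$ and $H^1(\Omega_{\epsilon})$, observes that the resulting finite-dimensional ODE system for the coefficients has a globally Lipschitz (sublinearly growing) right-hand side --- because $\sigma_{\tilde M}$ is bounded and Lipschitz and $s\mapsto s_+$ is Lipschitz, so the boundary functionals $F_{1,k},G_{1,k}$ are Lipschitz in the coefficients --- and invokes a Picard--Lindel\"of-type theorem to solve it on all of $[0,T]$; then it derives the uniform-in-$n$ estimates (testing with $u_{i,n}^{\epsilon}$, $v_{i,n}^{\epsilon}$ and with $\partial_t u_{i,n}^{\epsilon}$, $\partial_t v_{i,n}^{\epsilon}$, using the interpolation-trace inequality (\ref{A.7}) and Gronwall), and passes to the limit via Banach--Alaoglu and Aubin--Lions--Simon, recovering the weak formulation and the initial data. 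Your scheme instead freezes the bulk nonlinearities, proves well-posedness of a coupled problem with the monotone boundary coupling kept implicit, and closes with Schauder. What the paper's route buys is that no outer fixed-point iteration is needed at all: the only existence input is ODE theory in finite dimensions, and one never has to verify continuity and compactness of a solution map or justify passing to the limit in the frozen data. Your route buys modularity (the frozen transmission problem with monotone boundary term is a clean subproblem), but at the price of extra work --- in particular the continuity of the map $(\bar u,\bar v)\mapsto(u^{\epsilon},v^{\epsilon})$ through the boundary terms, which you correctly identify as the crux and which the paper sidesteps entirely because its boundary term is evaluated on the Galerkin unknowns themselves.

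One concrete error in a side remark: the system is \emph{not} lower-triangular in $i$. The truncated terms $L_i^{\tilde M}$ and $N_i^{\tilde M}$ contain the loss contributions $\sum_{j=1}^{M}a_{i,j}\,\sigma_{\tilde M}(u_i^{\epsilon})\sigma_{\tilde M}(u_j^{\epsilon})$ and $\sum_{j=1}^{M}b_{i,j}\,\sigma_{\tilde M}(v_i^{\epsilon})\sigma_{\tilde M}(v_j^{\epsilon})$, which involve all clusters $j\leq M$, including $j>i$ (e.g.\ $L_1$ already couples $u_1^{\epsilon}$ to $u_M^{\epsilon}$). So the proposed simplification of solving successively in $i$ does not apply; the equations for all $i$ must be treated simultaneously, as both the paper's Galerkin scheme and your fixed-point formulation on the full vector $(u^{\epsilon},v^{\epsilon})$ in fact do. The cascade structure is only exploitable later, in the boundedness proof of Lemma \ref{l3.1}, where the gain terms (which are genuinely triangular) drive the induction.
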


\begin{proof}

Let \{$\xi_j$\} be an orthonormal basis of
$H^1 (\Pi_{\epsilon})$ and \{$\eta_j$\} of $H^1 (\Omega_{\epsilon})$.
We denote by $u_{i,n}^{\epsilon}$ and $v_{i,n}^{\epsilon}$ ($1 \leq i \leq M$)
the Galerkin
approximations of $u_i^{\epsilon}$ and $v_i^{\epsilon}$, respectively, that is

\begin{equation} \label{4.1}
u_{i,n}^{\epsilon} (t,x,z):= \sum_{j=1}^n \alpha_{i,j}^n (t) \, \xi_j (x,z)
\end{equation}
for all $t \in [0,T]$, $(x,z) \in \Pi_{\epsilon}$ and

\begin{equation} \label{4.2}
v_{i,n}^{\epsilon} (t,x,z):= \sum_{j=1}^n \beta_{i,j}^n (t) \, \eta_j (x,z)
\end{equation}
for all $t \in [0,T]$, $(x,z) \in \Omega_{\epsilon}$.
Let now $i=1$.

Since \{$\xi_j$\} is an orthonormal basis of $H^1 (\Pi_{\epsilon})$,
for each $n \in \mathbb {N}$, there exists

\begin{equation} \label{4.3}
u_{1,n}^{\epsilon,0} (x,z)= \sum_{j=1}^n \alpha_{1,j}^{0,n} \, \xi_j (x,z)
\end{equation}
so that $u_{1,n}^{\epsilon,0} \rightarrow U_1^{\epsilon}(x,z)$ in $H^1 
(\Pi_{\epsilon})$
as $n \rightarrow \infty$.

Likewise, for each $n \in \mathbb {N}$, there exists

\begin{equation} \label{4.4}
v_{1,n}^{\epsilon,0} (x,z)= \sum_{j=1}^n \beta_{1,j}^{0,n} \, \eta_j (x,z)
\end{equation}
so that $v_{1,n}^{\epsilon,0} \rightarrow 0$ in $H^1 (\Omega_{\epsilon})$
as $n \rightarrow \infty$.

To derive the coefficients of the Galerkin approximations, we impose that
the functions $u_{1,n}^{\epsilon}$ and $v_{1,n}^{\epsilon}$
satisfy Eqs. (\ref{2.7a}) and (\ref{2.8a}).
Therefore, for all $\psi_1 \in {\mathrm span}\{\xi_j\}_{j=1}^n $,
Eq. (\ref{2.7a}) can be written as:

\begin{eqnarray} \label{4.5} \nonumber
&\displaystyle \int_{\Pi_{\epsilon}} \partial_t u_{1,n}^{\epsilon} \, \psi_1
\, dx \, dz+
\epsilon^2 \, \displaystyle \int_{\Pi_{\epsilon}} D_1 \,
\nabla_x u_{1,n}^{\epsilon} \cdot \nabla_x \psi_1 \, dx \, dz+
\displaystyle \int_{\Pi_{\epsilon}} \tilde{D}_1 \, \partial_z u_{1,n}^{\epsilon}
\cdot \partial_z \psi_1 \, dx \, dz\\ \nonumber
&+ \epsilon \,
\displaystyle \int_{\Gamma_{\epsilon}} c_1(x,z) \,
(u_{1,n}^{\epsilon}-v_{1,n}^{\epsilon})_{+} \, \psi_1 \, d\sigma_{\epsilon}
=\displaystyle \int_{\Pi_{\epsilon}} L_1^{\tilde M} (u_{i,n}^{\epsilon}) \,
\psi_1 \, dx \, dz \\
&+\displaystyle \int_{\Pi_{\epsilon}} f^{\epsilon}(t,x,z) 
\, \psi_1 \, dx \, dz
\end{eqnarray}
By testing Eq. (\ref{4.5}) with $\psi_1=\xi_k$, we get

\begin{eqnarray} \label{4.6} \nonumber
&\displaystyle \int_{\Pi_{\epsilon}}  \partial_t \bigg[ 
\sum_{j=1}^n \alpha_{1,j}^n (t) \, \xi_j (x,z) \bigg] \, \xi_k \, dx \, dz+
\epsilon^2 \, \displaystyle \int_{\Pi_{\epsilon}} D_1 \,
\bigg[ \sum_{j=1}^n \alpha_{1,j}^n (t) \, \nabla_x \xi_j (x,z) \bigg] \cdot
\nabla_x \xi_k \, dx \, dz \\ \nonumber
&+\displaystyle \int_{\Pi_{\epsilon}} \tilde{D}_1 \, 
\bigg[ \sum_{j=1}^n \alpha_{1,j}^n (t) \, \partial_z \xi_j (x,z) \bigg] \cdot
\partial_z \xi_k \, dx \, dz\\ \nonumber 
&+\epsilon \, \displaystyle \int_{\Gamma_{\epsilon}} c_1(x,z) \,
\bigg[ \sum_{j=1}^n \alpha_{1,j}^n (t) \, \xi_j (x,z)-
\sum_{j=1}^n \beta_{1,j}^n (t) \, \eta_j (x,z) \bigg]_{+} \, \xi_k 
\, d\sigma_{\epsilon}\\ \nonumber
&=-\displaystyle \int_{\Pi_{\epsilon}} \xi_k \, 
\sum_{a=1}^M a_{1,a} \, \sigma_{\tilde M} \bigg[
\sum_{b=1}^n \alpha_{1,b}^n (t) \, \xi_b (x,z) \bigg] \,
\sigma_{\tilde M} \bigg[ \sum_{c=1}^n \alpha_{a,c}^n (t) \, \xi_c (x,z) \bigg] 
\, dx \, dz
\\&+\displaystyle \int_{\Pi_{\epsilon}} f^{\epsilon}(t,x,z) \, \xi_k \, 
dx \, dz.
\end{eqnarray}
Hence, for $k \in \{1, \ldots ,n \}$:

\begin{eqnarray} \label{4.7} \nonumber
&\partial_t \alpha_{1,k}^n (t)+\sum_{j=1}^n A_{1jk} \, \alpha_{1,j}^n (t)=
-F_{1,k} (\alpha_1^n (t), \beta_1^n (t)) \\ \nonumber
&-\displaystyle \int_{\Pi_{\epsilon}} \xi_k \,
\sum_{a=1}^M a_{1,a} \, \sigma_{\tilde M} \bigg[
\sum_{b=1}^n \alpha_{1,b}^n (t) \, \xi_b (x,z) \bigg] \,
\sigma_{\tilde M} \bigg[ \sum_{c=1}^n \alpha_{a,c}^n (t) \, \xi_c (x,z) \bigg]
\, dx \, dz
\\&+\displaystyle \int_{\Pi_{\epsilon}} f^{\epsilon}(t,x,z) \, \xi_k \, 
dx \, dz
\end{eqnarray}
where the coefficients $A_{1jk}$ are defined by

\begin{equation} \label{4.8}
A_{1jk}:=\epsilon^2 \displaystyle \int_{\Pi_{\epsilon}} D_1 \,
\nabla_x \xi_j \cdot \nabla_x \xi_k \, dx\, dz+
\displaystyle \int_{\Pi_{\epsilon}} \tilde{D}_1 \, \partial_z \xi_j \cdot
\partial_z \xi_k \, dx\, dz  
\end{equation}
and

\begin{equation} \label{4.9}
F_{1,k} (\alpha_1^n (t), \beta_1^n (t)):=\epsilon 
\displaystyle \int_{\Gamma_{\epsilon}} c_1(x,z) \, 
(u_{1,n}^{\epsilon}-v_{1,n}^{\epsilon})_{+} \, \xi_k \, \, d\sigma_{\epsilon}
\end{equation}

We choose now $v_{1,n}^{\epsilon}$ satisfying Eq. (\ref{2.8a}) for all
$\phi_1 \in {\mathrm span}\{\eta_j\}_{j=1}^n $, i.e.:

\begin{eqnarray} \label{4.10} \nonumber
&\displaystyle \int_{\Omega_{\epsilon}} \partial_t v_{1,n}^{\epsilon} \, 
\phi_1 \, dx\, dz+
\displaystyle \int_{\Omega_{\epsilon}} d_1 \,
\nabla v_{1,n}^{\epsilon} \cdot \nabla \phi_1 \, dx\, dz \\
&- \epsilon \,
\displaystyle \int_{\Gamma_{\epsilon}} c_1(x,z) \,
(u_{1,n}^{\epsilon}-v_{1,n}^{\epsilon})_{+} \, \phi_1 \, d\sigma_{\epsilon}
=\displaystyle \int_{\Omega_{\epsilon}} N_1^{\tilde M} (v_{i,n}^{\epsilon}) \,
\phi_1 \, dx\, dz
\end{eqnarray}
By testing Eq. (\ref{4.10}) with $\phi_1=\eta_k$, we get

\begin{eqnarray} \label{4.11} \nonumber
&\displaystyle \int_{\Omega_{\epsilon}}  \partial_t \bigg[ 
\sum_{j=1}^n \beta_{1,j}^n (t) \, \eta_j (x,z) \bigg] \, \eta_k \, dx\, dz+
\displaystyle \int_{\Omega_{\epsilon}} d_1 \,
\bigg[ \sum_{j=1}^n \beta_{1,j}^n (t) \, \nabla \eta_j (x,z) \bigg] \cdot
\nabla \eta_k \, dx\, dz \\ \nonumber
&-\epsilon \, \displaystyle \int_{\Gamma_{\epsilon}} c_1(x,z) \,
\bigg[ \sum_{j=1}^n \alpha_{1,j}^n (t) \, \xi_j (x,z)-
\sum_{j=1}^n \beta_{1,j}^n (t) \, \eta_j (x,z) \bigg]_{+} \, \eta_k 
\, d\sigma_{\epsilon}\\
&=-\displaystyle \int_{\Omega_{\epsilon}} \eta_k \, 
\sum_{a=1}^M b_{1,a} \, \sigma_{\tilde M} \bigg[
\sum_{b=1}^n \beta_{1,b}^n (t) \, \eta_b (x,z) \bigg] \,
\sigma_{\tilde M} \bigg[ \sum_{c=1}^n \beta_{a,c}^n (t) \, \eta_c (x,z) \bigg]
\, dx\, dz
\end{eqnarray}
Hence, for $k \in \{1, \ldots ,n \}$:

\begin{eqnarray} \label{4.12} \nonumber
&\partial_t \beta_{1,k}^n (t)+\sum_{j=1}^n B_{1jk} \, \beta_{1,j}^n (t)=
G_{1,k} (\alpha_1^n (t), \beta_1^n (t)) \\
&-\displaystyle \int_{\Omega_{\epsilon}} \eta_k \,
\sum_{a=1}^M b_{1,a} \, \sigma_{\tilde M} \bigg[
\sum_{b=1}^n \beta_{1,b}^n (t) \, \eta_b (x,z) \bigg] \,
\sigma_{\tilde M} \bigg[ \sum_{c=1}^n \beta_{a,c}^n (t) \, \eta_c (x,z) \bigg]
\, dx\, dz
\end{eqnarray}
where the coefficients $B_{1jk}$ are defined by

\begin{equation} \label{4.13}
B_{1jk}:=\displaystyle \int_{\Omega_{\epsilon}} d_1 \,
\nabla \eta_j \cdot \nabla \eta_k \, dx \, dz
\end{equation}
and

\begin{equation} \label{4.14}
G_{1,k} (\alpha_1^n (t), \beta_1^n (t)):=\epsilon 
\displaystyle \int_{\Gamma_{\epsilon}} c_1(x,z) \, 
(u_{1,n}^{\epsilon}-v_{1,n}^{\epsilon})_{+} \, \eta_k \, \, d\sigma_{\epsilon}
\end{equation}

Eqs.(\ref{4.7}) and (\ref{4.12}) represent a system of $2 n$ ordinary
differential
equations for the coefficients $\alpha_1^n=(\alpha_{1,k}^n)_{k=1, \ldots,n}$,
$\beta_1^n=(\beta_{1,k}^n)_{k=1, \ldots,n}$.
By taking into account formulas (\ref{4.3}) and (\ref{4.4}), we assume as
initial conditions:

\begin{equation} \label{4.14a}
\alpha_{1,k}^{n}(0)=\alpha_{1,k}^{0,n}; \; \; \; \; 
\beta_{1,k}^{n}(0)=\beta_{1,k}^{0,n}.
\end{equation}

Since the left-hand side of this system of ordinary differential equations
is linear and the right-hand side has a sublinear growth (thanks to the
Lipschitz property of the functions),
one can conclude that the
Cauchy problem (\ref{4.7}), (\ref{4.12}), (\ref{4.14a}) has a unique solution
extended to the whole interval $[0,T]$
(see the generalization of the Picard-Lindel\"of theorem in \cite{Hart},
Theorem 5.1, p. 156).
Moreover, $\alpha_1^n=(\alpha_{1,k}^n)_{k=1, \ldots,n}(t)$,
$\beta_1^n=(\beta_{1,k}^n)_{k=1, \ldots,n}(t) \in H^1([0,T])$
for $t \in [0,T]$.
We prove in the following the global Lipschitz property of $F_{1,k}$.
The proof of the Lipschitz continuity of $G_{1,k}$ is similar.

Let $(u_{1,n}^{\epsilon}, v_{1,n}^{\epsilon})$ and
$(\tilde {u}_{1,n}^{\epsilon}, \tilde {v}_{1,n}^{\epsilon})$ be of the form
(\ref{4.1}) and (\ref{4.2}), with coefficients
($\alpha_1^n(t)=(\alpha_{1,k}^n)_{k=1, \ldots, n}$,
$\beta_1^n(t)=(\beta_{1,k}^n)_{k=1, \ldots, n}$)  and
($\tilde {\alpha}_{1}^n(t)=(\tilde {\alpha}_{1,k}^n)_{k=1, \ldots, n}$,
$\tilde {\beta}_{1}^n(t)=(\tilde {\beta}_{1,k}^n)_{k=1, \ldots, n}$),
respectively.

One has:

\begin{eqnarray} \label{4.15} \nonumber
&F_{1,k} (\alpha_1^n (t), \beta_1^n (t))-F_{1,k} (\tilde {\alpha}_1^n (t), 
\tilde {\beta}_1^n (t)) \\  \nonumber
&=\epsilon \displaystyle \int_{\Gamma_{\epsilon}} c_1(x,z) \,
\bigg[ (u_{1,n}^{\epsilon}-v_{1,n}^{\epsilon})_{+}-
(\tilde {u}_{1,n}^{\epsilon}-\tilde {v}_{1,n}^{\epsilon})_{+} \bigg] 
\xi_k (x,z) \,
\, d\sigma_{\epsilon}\\ \nonumber
& \leq \epsilon \, C_1 \displaystyle \int_{\Gamma_{\epsilon}} c_1(x,z) \,
\vert (u_{1,n}^{\epsilon}-\tilde {u}_{1,n}^{\epsilon})-
(v_{1,n}^{\epsilon}-\tilde {v}_{1,n}^{\epsilon}) \vert \, \vert \xi_k (x,z) 
\vert
\, d\sigma_{\epsilon} \\ \nonumber
&=\epsilon  \, C_1 \displaystyle \int_{\Gamma_{\epsilon}} c_1(x,z) 
\bigg \vert \sum_{j=1}^n (\alpha_{1,j}^n (t)-\tilde {\alpha}_{1,j}^n (t))
\xi_j (x,z) \\ \nonumber
&-\sum_{j=1}^n (\beta_{1,j}^n (t)-\tilde {\beta}_{1,j}^n (t)) 
\eta_j (x,z)
\bigg \vert  \vert \xi_k (x,z) \vert \, \, d\sigma_{\epsilon} \\ \nonumber
&\leq \epsilon \, C_1 \sum_{j=1}^n \vert \alpha_{1,j}^n (t)-
\tilde {\alpha}_{1,j}^n (t) \vert \displaystyle \int_{\Gamma_{\epsilon}} 
c_1(x,z) \, \vert \xi_j (x,z) \vert \, \vert \xi_k (x,z) \vert \, \, 
d\sigma_{\epsilon}
\\ \nonumber
&+\epsilon \, C_1 \sum_{j=1}^n \vert \beta_{1,j}^n (t)-
\tilde {\beta}_{1,j}^n (t) \vert \displaystyle \int_{\Gamma_{\epsilon}}
c_1(x,z) \, \vert \eta_j (x,z) \vert \, \vert \xi_k (x,z) \vert 
\, d\sigma_{\epsilon} \\ \nonumber
\nonumber
& \leq \epsilon \,  C_1 \max \{c_{jk}\} \sum_{j=1}^n \vert \alpha_{1,j}^n (t)-
\tilde {\alpha}_{1,j}^n (t) \vert \\
&+\epsilon \, C_1 \max \{d_{jk}\}
\sum_{j=1}^n \vert \beta_{1,j}^n (t)-\tilde {\beta}_{1,j}^n (t) \vert
\end{eqnarray}
where the coefficients ($c_{jk}$) and ($d_{jk}$) are given by

\begin{equation} \label{4.16}
c_{jk}:=\displaystyle \int_{\Gamma_{\epsilon}} c_1(x,z) \vert \xi_j (x,z) \vert 
\, \vert \xi_k (x,z) \vert \, \, d\sigma_{\epsilon}
\end{equation}

\begin{equation} \label{4.17}
d_{jk}:=\displaystyle \int_{\Gamma_{\epsilon}} c_1(x,z) \vert \eta_j (x,z) 
\vert \, \vert \xi_k (x,z) \vert \, \, d\sigma_{\epsilon}
\end{equation}
for $j,k=1, \ldots ,n$.
Hence, we get

\begin{equation} \label{4.18}
\vert F_{1,k} (\alpha_1^n, \beta_1^n)-F_{1,k} (\tilde {\alpha}_1^n, 
\tilde {\beta}_1^n) \vert \leq
\epsilon \, C_2(n) \bigg[ \vert \alpha_1^n-\tilde {\alpha}_1^n \vert+
\vert \beta_1^n-\tilde {\beta}_1^n \vert \bigg].
\end{equation}
The same conclusions can be drawn also when $1 < i \leq M$ by applying
exactly the arguments considered above.

\noindent

\textbf{Uniform estimates}

\noindent

Let us now prove uniform estimates in $n$ for $u_{i,n}^{\epsilon}$ and
$v_{i,n}^{\epsilon}$ ($1 \leq i \leq M$).
In the case $i=1$, we take in Eq. (\ref{4.5})
$\psi_1=u_{1,n}^{\epsilon}(t, \cdot)$ as test
function:

\begin{eqnarray} \label{4.18a}\nonumber
&\frac{\displaystyle 1}{\displaystyle 2} 
  \displaystyle \int_{\Pi_{\epsilon}} \partial_t (u_{1,n}^{\epsilon})^2  
\, dx \, dz
+\epsilon^2 \displaystyle \int_{\Pi_{\epsilon}} D_1
\vert \nabla_x u_{1,n}^{\epsilon} \vert^2 \, dx \, dz \\ \nonumber
&+\displaystyle \int_{\Pi_{\epsilon}} \tilde{D}_1 \vert \partial_z 
u_{1,n}^{\epsilon} \vert^2 \, dx \, dz
+ \epsilon \displaystyle \int_{\Gamma_{\epsilon}} c_1(x,z) 
(u_{1,n}^{\epsilon}-v_{1,n}^{\epsilon})_{+} \, u_{1,n}^{\epsilon}
\, d\sigma_{\epsilon} \\
&=-\displaystyle \int_{\Pi_{\epsilon}} \bigg[ \sum_{j=1}^{M} a_{1,j} \,
\sigma_{\tilde M} (u_{1,n}^{\epsilon})  
\sigma_{\tilde M} (u_{j,n}^{\epsilon}) 
\bigg] \, u_{1,n}^{\epsilon} \, dx \, dz
+\displaystyle \int_{\Pi_{\epsilon}} f^{\epsilon}(t,x,z) \, u_{1,n}^{\epsilon}
\, dx \, dz
\end{eqnarray}
Since the first term on the right-hand side is always negative due to the
truncation of the coagulation terms (see Eq. (\ref{2.6})), one obtains:

\begin{eqnarray} \label{4.19} \nonumber
&\frac{\displaystyle 1}{\displaystyle 2} \partial_t \Vert u_{1,n}^{\epsilon}
\Vert^2_{L^2(\Pi_{\epsilon})}+\epsilon^2 \, D_1 \Vert
\nabla_x u_{1,n}^{\epsilon} \Vert^2_{L^2(\Pi_{\epsilon})} 
+\tilde{D}_1 \Vert \partial_z u_{1,n}^{\epsilon} \Vert^2_{L^2(\Pi_{\epsilon})}
\\ \nonumber
&\leq
-\epsilon \displaystyle \int_{\Gamma_{\epsilon}} c_1(x,z) (u_{1,n}^{\epsilon}
-v_{1,n}^{\epsilon})_{+} \, u_{1,n}^{\epsilon} \, d\sigma_{\epsilon} 
+\frac{\displaystyle 1}{\displaystyle 2} \Vert f^{\epsilon}(t,x,z)
\Vert^2_{L^2(\Pi_{\epsilon})} \\
&+\frac{\displaystyle 1}{\displaystyle 2}
\Vert u_{1,n}^{\epsilon}\Vert^2_{L^2(\Pi_{\epsilon})}
\end{eqnarray}
where we have applied the H\"older inequality to the last term on the
right-hand side of Eq. (\ref{4.18a}).
Since the function $f^{\epsilon}(t,x,z)$ is bounded in
$L^2([0,T] \times \Pi_{\epsilon})$,
Eq. (\ref{4.19}) reads:

\begin{eqnarray} \label{4.19a} \nonumber
&\frac{\displaystyle 1}{\displaystyle 2} \partial_t \Vert u_{1,n}^{\epsilon}
\Vert^2_{L^2(\Pi_{\epsilon})}+\epsilon^2 \, D_1 \Vert
\nabla_x u_{1,n}^{\epsilon} \Vert^2_{L^2(\Pi_{\epsilon})} 
+\tilde{D}_1 \Vert \partial_z u_{1,n}^{\epsilon} \Vert^2_{L^2(\Pi_{\epsilon})}
\\
&\leq
-\epsilon \displaystyle \int_{\Gamma_{\epsilon}} c_1(x,z) (u_{1,n}^{\epsilon}
-v_{1,n}^{\epsilon})_{+} \, u_{1,n}^{\epsilon} \, d\sigma_{\epsilon}
+C_f+\frac{\displaystyle 1}{\displaystyle 2}
\Vert u_{1,n}^{\epsilon} \Vert^2_{L^2(\Pi_{\epsilon})}
\end{eqnarray}
where $C_f$ is a positive constant.
By testing now Eq. (\ref{4.10}) with $\phi_1=v_{1,n}^{\epsilon}(t, \cdot)$,
we get

\begin{eqnarray} \nonumber
&\frac{\displaystyle 1}{\displaystyle 2} 
  \displaystyle \int_{\Omega_{\epsilon}} \partial_t (v_{1,n}^{\epsilon})^2  
\, dx \, dz
+\displaystyle \int_{\Omega_{\epsilon}} d_1 
\vert \nabla v_{1,n}^{\epsilon} \vert^2 \, dx \, dz
-\epsilon \displaystyle \int_{\Gamma_{\epsilon}} c_1(x,z) 
(u_{1,n}^{\epsilon}-v_{1,n}^{\epsilon})_{+} \, v_{1,n}^{\epsilon}
\, d\sigma_{\epsilon}
\\  
&=-\displaystyle \int_{\Omega_{\epsilon}} \bigg[ \sum_{j=1}^{M} b_{1,j} \,
\sigma_{\tilde M} (v_{1,n}^{\epsilon}) \, 
\sigma_{\tilde M} (v_{j,n}^{\epsilon}) 
\bigg] \, v_{1,n}^{\epsilon} \, dx \, dz
\end{eqnarray}
Since again the term on the right-hand side is negative, we conclude:

\begin{equation} \label{4.20}
\frac{\displaystyle 1}{\displaystyle 2} \partial_t \Vert v_{1,n}^{\epsilon}
\Vert^2_{L^2(\Omega_{\epsilon})}+d_1 \Vert
\nabla v_{1,n}^{\epsilon} \Vert^2_{L^2(\Omega_{\epsilon})} \leq
\epsilon \displaystyle \int_{\Gamma_{\epsilon}} c_1(x,z) (u_{1,n}^{\epsilon}
-v_{1,n}^{\epsilon})_{+} \, v_{1,n}^{\epsilon} \, d\sigma_{\epsilon}
\end{equation}
Adding the inequalities (\ref{4.19a}) and (\ref{4.20}), it follows that

\begin{eqnarray} \label{4.21} \nonumber
&\frac{\displaystyle 1}{\displaystyle 2} \partial_t \Vert u_{1,n}^{\epsilon}
\Vert^2_{L^2(\Pi_{\epsilon})}+
\frac{\displaystyle 1}{\displaystyle 2} \partial_t \Vert v_{1,n}^{\epsilon}
\Vert^2_{L^2(\Omega_{\epsilon})}
+\epsilon^2 \, D_1 \Vert
\nabla_x u_{1,n}^{\epsilon} \Vert^2_{L^2(\Pi_{\epsilon})} 
+\tilde{D}_1 \Vert \partial_z u_{1,n}^{\epsilon} \Vert^2_{L^2(\Pi_{\epsilon})}
\\ \nonumber
&+d_1 \Vert \nabla v_{1,n}^{\epsilon} 
\Vert^2_{L^2(\Omega_{\epsilon})} 
\leq C_f+
\epsilon \displaystyle \int_{\Gamma_{\epsilon}} c_1(x,z) (u_{1,n}^{\epsilon}
-v_{1,n}^{\epsilon})_{+} \, (v_{1,n}^{\epsilon}-u_{1,n}^{\epsilon}) 
\, d\sigma_{\epsilon}\\
&+\frac{\displaystyle 1}{\displaystyle 2} \Vert u_{1,n}^{\epsilon}
\Vert^2_{L^2(\Pi_{\epsilon})}
\end{eqnarray}
Let us estimate the second term on the right-hand side of (\ref{4.21}):

\begin{eqnarray} \label{estim} \nonumber
&I=\epsilon \displaystyle \int_{\Gamma_{\epsilon}} c_1(x,z) (u_{1,n}^{\epsilon}
-v_{1,n}^{\epsilon})_{+} \, (v_{1,n}^{\epsilon}-u_{1,n}^{\epsilon})
\, d\sigma_{\epsilon}
\leq \epsilon \displaystyle \int_{\Gamma_{\epsilon}} c_1(x,z)
\vert u_{1,n}^{\epsilon}-v_{1,n}^{\epsilon} \vert^2 \, d\sigma_{\epsilon}
\\ \nonumber
&\leq  \epsilon \Vert c_1(x,z) \Vert_{L^{\infty}(\Gamma_{\epsilon})}
\displaystyle \int_{\Gamma_{\epsilon}} (\vert u_{1,n}^{\epsilon} \vert^2
+\vert v_{1,n}^{\epsilon} \vert^2) \, d\sigma_{\epsilon}\\
&\leq \epsilon \Vert c_1(x,z) \Vert_{L^{\infty}(\Gamma_{\epsilon})}
\bigg( \Vert u_{1,n}^{\epsilon} \Vert^2_{L^2(\Gamma_{\epsilon})}+
\Vert v_{1,n}^{\epsilon} \Vert^2_{L^2(\Gamma_{\epsilon})} \bigg)
\end{eqnarray}
Applying the generalized interpolation-trace inequality (\ref{A.7})
in Appendix A to each
term inside the round brackets, one has:

\begin{equation} \label{4.22}
I \leq C_1 \, \epsilon \, \eta \bigg( \Vert \nabla u_{1,n}^{\epsilon}
\Vert^2_{L^2(\Pi_{\epsilon})}+\Vert \nabla v_{1,n}^{\epsilon}
\Vert^2_{L^2(\Omega_{\epsilon})} \bigg)+C_2 \, \epsilon \, {\eta}^{-1}
\bigg( \Vert  u_{1,n}^{\epsilon}\Vert^2_{L^2(\Pi_{\epsilon})}+
\Vert v_{1,n}^{\epsilon} \Vert^2_{L^2(\Omega_{\epsilon})} \bigg)
\end{equation}
where $\eta$ is a small positive constant.
If we take into account the estimate (\ref{4.22}), the inequality (\ref{4.21})
reads:

\begin{eqnarray} \label{4.23} \nonumber
&\frac{\displaystyle 1}{\displaystyle 2} \partial_t \Vert u_{1,n}^{\epsilon}
\Vert^2_{L^2(\Pi_{\epsilon})}+
\frac{\displaystyle 1}{\displaystyle 2} \partial_t \Vert v_{1,n}^{\epsilon}
\Vert^2_{L^2(\Omega_{\epsilon})}
+(\epsilon^2 \, D_1-C_1 \, \epsilon \, \eta) \Vert
\nabla_x u_{1,n}^{\epsilon} \Vert^2_{L^2(\Pi_{\epsilon})} \\ \nonumber
&+(\tilde{D}_1-C_1 \, \epsilon \, \eta) \Vert
\partial_z u_{1,n}^{\epsilon} \Vert^2_{L^2(\Pi_{\epsilon})}
+(d_1-C_1 \, \epsilon \, \eta) \Vert \nabla v_{1,n}^{\epsilon} 
\Vert^2_{L^2(\Omega_{\epsilon})} 
\leq C_f \\
&+\bigg(C_2 \, \epsilon \, {\eta}^{-1}+
\frac{\displaystyle 1}{\displaystyle 2} \bigg) \Vert  u_{1,n}^{\epsilon}
\Vert^2_{L^2(\Pi_{\epsilon})} 
+C_2 \, \epsilon \, {\eta}^{-1} \Vert v_{1,n}^{\epsilon} 
\Vert^2_{L^2(\Omega_{\epsilon})} 
\end{eqnarray}
If one chooses $\eta < \min \{\frac{\epsilon D_1}{C_1},
\frac{\tilde{D}_1}{\epsilon \, C_1},
\frac{d_1}{\epsilon \, C_1} \}$, the last three terms on the
left-hand side are positive and Eq. (\ref{4.23}) reduces to:

\begin{equation} \label{4.24}
\partial_t \Vert u_{1,n}^{\epsilon}
\Vert^2_{L^2(\Pi_{\epsilon})}+
\partial_t \Vert v_{1,n}^{\epsilon}
\Vert^2_{L^2(\Omega_{\epsilon})} \leq 2 \, C_f+
2 \,C_2 \, \epsilon \, {\eta}^{-1} \bigg( \Vert  u_{1,n}^{\epsilon}
\Vert^2_{L^2(\Pi_{\epsilon})}+\Vert v_{1,n}^{\epsilon}
\Vert^2_{L^2(\Omega_{\epsilon})} \bigg)
\end{equation}
Integrating Eq. (\ref{4.24}) over $[0,t]$ with $t \in [0,T]$,
we get

\begin{eqnarray} \label{4.25} \nonumber
&\Vert u_{1,n}^{\epsilon}(t) \Vert^2_{L^2(\Pi_{\epsilon})}+
\Vert v_{1,n}^{\epsilon}(t)  \Vert^2_{L^2(\Omega_{\epsilon})} \leq
\Vert u_{1,n}^{\epsilon,0} \Vert^2_{L^2(\Pi_{\epsilon})}+
\Vert v_{1,n}^{\epsilon,0}  \Vert^2_{L^2(\Omega_{\epsilon})} \\
&+2 \, C_f \, T
+2 \, C_2 \, \epsilon \, {\eta}^{-1}
\displaystyle \int_0^t ds \bigg(  
\Vert u_{1,n}^{\epsilon}\Vert^2_{L^2(\Pi_{\epsilon})}+
\Vert v_{1,n}^{\epsilon} \Vert^2_{L^2(\Omega_{\epsilon})} \bigg)
\end{eqnarray}
Since the sequences $(u_{1,n}^{\epsilon,0})_{n \in \mathbb{N}}$ and
$(v_{1,n}^{\epsilon,0})_{n \in \mathbb{N}}$ converge in $H^1 (\Pi_{\epsilon})$
and $H^1 (\Omega_{\epsilon})$, respectively, they are bounded in $L^2$.
Therefore, Eq. (\ref{4.25}) reads

\begin{eqnarray} \label{4.25a} \nonumber
&\Vert u_{1,n}^{\epsilon}(t) \Vert^2_{L^2(\Pi_{\epsilon})}+
\Vert v_{1,n}^{\epsilon}(t)  \Vert^2_{L^2(\Omega_{\epsilon})} \leq
C \\
&+2 \, C_2 \, \epsilon \, {\eta}^{-1}
\displaystyle \int_0^t ds \bigg(  
\Vert u_{1,n}^{\epsilon}\Vert^2_{L^2(\Pi_{\epsilon})}+
\Vert v_{1,n}^{\epsilon} \Vert^2_{L^2(\Omega_{\epsilon})} \bigg)
\end{eqnarray}
Applying Gronwalls's inequality, we obtain

\begin{equation} \label{4.25b}
\Vert u_{1,n}^{\epsilon}(t) \Vert^2_{L^2(\Pi_{\epsilon})}+
\Vert v_{1,n}^{\epsilon}(t)  \Vert^2_{L^2(\Omega_{\epsilon})}
\leq C+2 \, C \, C_2 \, \epsilon \, {\eta}^{-1}
\displaystyle \int_0^t e^{2 \, C_2 \, \epsilon \, {\eta}^{-1} (t-s)} \, ds
\end{equation}
Therefore, given $\epsilon \in [0,1]$, $\eta$ is fixed
and for $t \in [0,T]$ we get:

\begin{equation} \label{4.26}  
\Vert u_{1,n}^{\epsilon}(t, \cdot) \Vert^2_{L^2(\Pi_{\epsilon})}+
\Vert v_{1,n}^{\epsilon}(t, \cdot)  \Vert^2_{L^2(\Omega_{\epsilon})} \leq C_3
\end{equation}
where $C_3$ is a positive constant independent of $n$ and $\epsilon$.

By testing Eqs. (\ref{2.7a}) and (\ref{2.8a}),
in the case $1 < i \leq M$,
with $\psi_i=u_{i,n}^{\epsilon}(t, \cdot)$
and $\phi_i=v_{i,n}^{\epsilon}(t, \cdot)$, respectively, one gets:

\begin{eqnarray} \label{add1} \nonumber
&\frac{\displaystyle 1}{\displaystyle 2} \partial_t \Vert u_{i,n}^{\epsilon}
\Vert^2_{L^2(\Pi_{\epsilon})}+\epsilon^2 \, D_i \Vert
\nabla_x u_{i,n}^{\epsilon} \Vert^2_{L^2(\Pi_{\epsilon})} 
+\tilde{D}_i \Vert \partial_z u_{i,n}^{\epsilon} \Vert^2_{L^2(\Pi_{\epsilon})}
\\
&\leq
-\epsilon \displaystyle \int_{\Gamma_{\epsilon}} c_i(x,z) (u_{i,n}^{\epsilon}
-v_{i,n}^{\epsilon})_{+} \, u_{i,n}^{\epsilon} \, d\sigma_{\epsilon}
+C+\frac{\displaystyle 1}{\displaystyle 4}
\Vert u_{i,n}^{\epsilon} \Vert^2_{L^2(\Pi_{\epsilon})}
\end{eqnarray}

\begin{eqnarray} \label{add2} \nonumber
&\frac{\displaystyle 1}{\displaystyle 2} \partial_t \Vert v_{i,n}^{\epsilon}
\Vert^2_{L^2(\Omega_{\epsilon})}+d_i \Vert
\nabla v_{i,n}^{\epsilon} \Vert^2_{L^2(\Omega_{\epsilon})} \leq
\epsilon \displaystyle \int_{\Gamma_{\epsilon}} c_i(x,z) (u_{i,n}^{\epsilon}
-v_{i,n}^{\epsilon})_{+} \, v_{i,n}^{\epsilon} \, d\sigma_{\epsilon}\\
&+\tilde{C}+\frac{\displaystyle 1}{\displaystyle 4}
\Vert v_{i,n}^{\epsilon} \Vert^2_{L^2(\Omega_{\epsilon})}
\end{eqnarray}
where $C$ and $\tilde{C}$ are two positive constants.
Adding the inequalities (\ref{add1}) and (\ref{add2}), and exploiting
the estimate (\ref{estim}) (which holds also when $1 < i \leq M$)
along with the interpolation-trace inequality (\ref{A.7}), we obtain that
also the functions $u_{i,n}^{\epsilon}$ and $v_{i,n}^{\epsilon}$
($1 < i \leq M$) satisfy Eq. (\ref{4.24}).
The rest of the proof carries over verbatim, leading to Eq. (\ref{4.26})
also for the case $1 < i \leq M$.

Thus, we can conclude that:

$\{u_{i,n}^{\epsilon}\}$ is bounded in
$L^{\infty} ([0,T]; L^2(\Pi_{\epsilon}))$, and $\{v_{i,n}^{\epsilon}\}$
is bounded in
$L^{\infty} ([0,T]; L^2(\Omega_{\epsilon}))$.

Let us now derive uniform estimates in $n$ for $\partial_t u_{i,n}^{\epsilon}$,
$\nabla u_{i,n}^{\epsilon}$
and $\partial_t v_{i,n}^{\epsilon}$, $\nabla v_{i,n}^{\epsilon}$
($1 \leq i \leq M$).
In the case $i=1$,
we take in Eq. (\ref{4.5}) $\psi_1=\partial_t u_{1,n}^{\epsilon}(t, \cdot)$ as
test function:

\begin{eqnarray} \label{4.27} \nonumber
&  \displaystyle \int_{\Pi_{\epsilon}} \vert \partial_t u_{1,n}^{\epsilon} 
\vert^2 \, dx \, dz 
+\frac{\displaystyle \epsilon^2}{\displaystyle 2} \displaystyle 
\int_{\Pi_{\epsilon}} D_1 \,
\partial_t (\vert \nabla_x u_{1,n}^{\epsilon} \vert^2) \, dx \, dz \\ \nonumber
&+\frac{\displaystyle 1}{\displaystyle 2} \displaystyle
\int_{\Pi_{\epsilon}} \tilde{D}_1 \,
\partial_t (\vert \partial_z u_{1,n}^{\epsilon} \vert^2) \, dx \, dz
+\epsilon \displaystyle \int_{\Gamma_{\epsilon}} c_1(x,z) 
(u_{1,n}^{\epsilon}-v_{1,n}^{\epsilon})_{+} \, (\partial_t u_{1,n}^{\epsilon})
\, d\sigma_{\epsilon} \\ 
&=-\displaystyle \int_{\Pi_{\epsilon}}
w_{\tilde M}^{\epsilon} \, (\partial_t u_{1,n}^{\epsilon}) \, dx \, dz
+\displaystyle \int_{\Pi_{\epsilon}} f^{\epsilon}(t,x,z) \, (\partial_t 
u_{1,n}^{\epsilon}) \, dx \, dz
\end{eqnarray}
where $w_{\tilde M}^{\epsilon}=:\sum_{j=1}^{M} a_{1,j} \, 
\sigma_{\tilde M} (u_{1,n}^{\epsilon}) \, 
\sigma_{\tilde M} (u_{j,n}^{\epsilon})$.

By taking into account the H\"older and Young inequalities, Eq. (\ref{4.27})
becomes
\begin{eqnarray} \label{4.28} \nonumber
&  \displaystyle \int_{\Pi_{\epsilon}} \vert \partial_t u_{1,n}^{\epsilon} 
\vert^2  \, dx \, dz
+\frac{\displaystyle \epsilon^2 \, D_1}{\displaystyle 2} \displaystyle 
\int_{\Pi_{\epsilon}}  \,
\partial_t (\vert \nabla_x u_{1,n}^{\epsilon} \vert^2) \, dx \, dz
+\frac{\displaystyle \tilde{D}_1}{\displaystyle 2} \displaystyle
\int_{\Pi_{\epsilon}}  \,
\partial_t (\vert \partial_z u_{1,n}^{\epsilon} \vert^2) \, dx \, dz
\\ \nonumber
&\leq
\frac{\displaystyle 1}{\displaystyle 2}
\Vert w_{\tilde M}^{\epsilon} \Vert^2_{L^2(\Pi_{\epsilon})}+
\frac{\displaystyle 1}{\displaystyle 2}
\Vert \partial_t u_{1,n}^{\epsilon} \Vert^2_{L^2(\Pi_{\epsilon})} 
-\epsilon \displaystyle \int_{\Gamma_{\epsilon}} c_1(x,z) 
(u_{1,n}^{\epsilon}-v_{1,n}^{\epsilon})_{+} \, 
(\partial_t u_{1,n}^{\epsilon}) \, d\sigma_{\epsilon}
\\
&+\eta^{-1} \Vert f^{\epsilon}(t,x,z) \Vert^2_{L^2(\Pi_{\epsilon})} +
\eta \Vert \partial_t u_{1,n}^{\epsilon} \Vert^2_{L^2(\Pi_{\epsilon})}
\end{eqnarray}
Exploiting the truncation of the coagulation terms and choosing
$\eta=\frac{\displaystyle 1}{\displaystyle 4}$, we get:

\begin{eqnarray} \label{4.29} \nonumber
& \frac{\displaystyle 1}{\displaystyle 4}
\Vert \partial_t u_{1,n}^{\epsilon} \Vert^2_{L^2(\Pi_{\epsilon})} 
+\frac{\displaystyle \epsilon^2}{\displaystyle 2} \,  D_1 \,
\partial_t \displaystyle 
\int_{\Pi_{\epsilon}} 
\vert \nabla_x u_{1,n}^{\epsilon} \vert^2 \, dx \, dz
+\frac{\displaystyle \tilde{D}_1}{\displaystyle 2} \, 
\partial_t \displaystyle \int_{\Pi_{\epsilon}} \vert \partial_z 
u_{1,n}^{\epsilon} \vert^2 \, dx \, dz\\
&\leq C^1_{\tilde M} 
-\epsilon \displaystyle \int_{\Gamma_{\epsilon}} c_1(x,z) 
(u_{1,n}^{\epsilon}-v_{1,n}^{\epsilon})_{+} \, 
(\partial_t u_{1,n}^{\epsilon}) \, d\sigma_{\epsilon}
+4 \, \Vert f^{\epsilon}(t,x,z) \Vert^2_{L^2(\Pi_{\epsilon})}
\end{eqnarray}
where $C^1_{\tilde M}$ is a positive constant which depends on $\tilde M$.

Let us now test Eq. (\ref{4.10}) with the function
$\phi_1=\partial_t v_{1,n}^{\epsilon}(t, \cdot)$:

\begin{eqnarray} \label{4.30} \nonumber
&  \displaystyle \int_{\Omega_{\epsilon}} \vert \partial_t v_{1,n}^{\epsilon} 
\vert^2  \, dx \, dz
+\frac{\displaystyle d_1}{\displaystyle 2} \displaystyle 
\int_{\Omega_{\epsilon}} 
\partial_t (\vert \nabla v_{1,n}^{\epsilon} \vert^2) \, dx \, dz \\ \nonumber
&-\epsilon \displaystyle \int_{\Gamma_{\epsilon}} c_1(x,z) 
(u_{1,n}^{\epsilon}-v_{1,n}^{\epsilon})_{+} \, 
(\partial_t v_{1,n}^{\epsilon}) \, d\sigma_{\epsilon}
\\  
&=-\displaystyle \int_{\Omega_{\epsilon}}
\bigg[ \sum_{j=1}^{M} b_{1,j} \,
\sigma_{\tilde M} (v_{1,n}^{\epsilon}) \,
\sigma_{\tilde M} (v_{j,n}^{\epsilon}) \bigg]
\, (\partial_t v_{1,n}^{\epsilon}) \, dx \, dz
\end{eqnarray}
By applying once again the H\"older and Young inequalities to the
right-hand side as above, and exploiting Eq. (\ref{2.6}), we end up with the
following expression:

\begin{eqnarray} \label{4.31}  \nonumber
& \frac{\displaystyle 1}{\displaystyle 2}
\Vert \partial_t v_{1,n}^{\epsilon} \Vert^2_{L^2(\Omega_{\epsilon})} 
+\frac{\displaystyle d_1}{\displaystyle 2} \, 
\partial_t \displaystyle 
\int_{\Omega_{\epsilon}} 
\vert \nabla v_{1,n}^{\epsilon} \vert^2 \, dx \, dz \\
&\leq C^2_{\tilde M} 
+\epsilon \displaystyle \int_{\Gamma_{\epsilon}} c_1(x,z) 
(u_{1,n}^{\epsilon}-v_{1,n}^{\epsilon})_{+} \, (\partial_t v_{1,n}^{\epsilon})
\, d\sigma_{\epsilon}
\end{eqnarray}
where $C^2_{\tilde M}$ is a positive constant which depends on $\tilde M$.
Adding (\ref{4.29}) and (\ref{4.31}), and taking into account the
$L^2$-boundedness of $f^{\epsilon}(t,x,z)$,
one obtains:

\begin{eqnarray} \label{4.32} \nonumber
&\frac{\displaystyle 1}{\displaystyle 4} \Vert  \partial_t u_{1,n}^{\epsilon}
\Vert^2_{L^2(\Pi_{\epsilon})}+
\frac{\displaystyle 1}{\displaystyle 2} \Vert \partial_t v_{1,n}^{\epsilon}
\Vert^2_{L^2(\Omega_{\epsilon})}
+\frac{\displaystyle \epsilon^2}{\displaystyle 2} \,  D_1 \,
\partial_t \displaystyle \int_{\Pi_{\epsilon}} 
\vert \nabla_x u_{1,n}^{\epsilon} \vert^2 \, dx \, dz\\ \nonumber
&+\frac{\displaystyle \tilde{D}_1}{\displaystyle 2}
\partial_t \displaystyle \int_{\Pi_{\epsilon}}
\vert \partial_z u_{1,n}^{\epsilon} \vert^2 \, dx \, dz
+\frac{\displaystyle d_1}{\displaystyle 2} \, 
\partial_t \displaystyle \int_{\Omega_{\epsilon}} 
\vert \nabla v_{1,n}^{\epsilon} \vert^2  \, dx \, dz\\
&\leq C+
\epsilon \displaystyle \int_{\Gamma_{\epsilon}} c_1(x,z) (u_{1,n}^{\epsilon}
-v_{1,n}^{\epsilon})_{+} \, (\partial_t v_{1,n}^{\epsilon}-
\partial_t u_{1,n}^{\epsilon}) \, d\sigma_{\epsilon}
\end{eqnarray}
where $C$ is a positive constant.
If we decompose now the function $\partial_t (v_{1,n}^{\epsilon}-
u_{1,n}^{\epsilon})$ on the right-hand side in its positive and negative
parts, Eq. (\ref{4.32}) can be rewritten as:

\begin{eqnarray} \label{4.33} \nonumber
&\frac{\displaystyle 1}{\displaystyle 4} \Vert  \partial_t u_{1,n}^{\epsilon}
\Vert^2_{L^2(\Pi_{\epsilon})}+
\frac{\displaystyle 1}{\displaystyle 2} \Vert \partial_t v_{1,n}^{\epsilon}
\Vert^2_{L^2(\Omega_{\epsilon})}
+\frac{\displaystyle \epsilon^2}{\displaystyle 2} \,  D_1 \,
\partial_t \displaystyle \int_{\Pi_{\epsilon}} 
\vert \nabla_x u_{1,n}^{\epsilon} \vert^2 \, dx \, dz \\
&+\frac{\displaystyle \tilde{D}_1}{\displaystyle 2} \, 
\partial_t \displaystyle \int_{\Pi_{\epsilon}} 
\vert \partial_z u_{1,n}^{\epsilon} \vert^2 \, dx \, dz
+\frac{\displaystyle d_1}{\displaystyle 2} \, 
\partial_t \displaystyle \int_{\Omega_{\epsilon}} 
\vert \nabla v_{1,n}^{\epsilon} \vert^2 \, dx \, dz \\
&\leq C
-\frac{\displaystyle \epsilon}{\displaystyle 2} 
\displaystyle \int_{\Gamma_{\epsilon}} c_1(x,z) 
\partial_t [ (u_{1,n}^{\epsilon}-v_{1,n}^{\epsilon})^2_{+} ] \, 
d\sigma_{\epsilon}
\end{eqnarray}
Integrating over $[0,t]$ with $t \in [0,T]$, we deduce:

\begin{eqnarray} \label{4.34} \nonumber
&\frac{\displaystyle 1}{\displaystyle 4} \displaystyle \int_{0}^{t} ds \,
\Vert  \partial_s u_{1,n}^{\epsilon}
\Vert^2_{L^2(\Pi_{\epsilon})}+
\frac{\displaystyle 1}{\displaystyle 2} \displaystyle \int_{0}^{t} ds \,
\Vert \partial_s v_{1,n}^{\epsilon}
\Vert^2_{L^2(\Omega_{\epsilon})} \\ \nonumber
&+\frac{\displaystyle \epsilon^2}{\displaystyle 2} \,  D_1 \,
\displaystyle \int_{\Pi_{\epsilon}} 
\vert \nabla_x u_{1,n}^{\epsilon} \vert^2 \, dx \, dz
-\frac{\displaystyle \epsilon^2}{\displaystyle 2} \,  D_1 \,
\displaystyle \int_{\Pi_{\epsilon}}
\vert \nabla_x u_{1,n}^{\epsilon}(0) \vert^2 \, dx \, dz \\ \nonumber
&+\frac{\displaystyle \tilde{D}_1}{\displaystyle 2} \,
\displaystyle \int_{\Pi_{\epsilon}}
\vert \partial_z u_{1,n}^{\epsilon} \vert^2 \, dx \, dz-
\frac{\displaystyle \tilde{D}_1}{\displaystyle 2} \,
\displaystyle \int_{\Pi_{\epsilon}}
\vert \partial_z u_{1,n}^{\epsilon}(0) \vert^2 \, dx \, dz\\ \nonumber
&+\frac{\displaystyle d_1}{\displaystyle 2} \, 
\displaystyle \int_{\Omega_{\epsilon}} 
\vert \nabla v_{1,n}^{\epsilon} \vert^2 \, dx \, dz-
\frac{\displaystyle d_1}{\displaystyle 2} \,
\displaystyle \int_{\Omega_{\epsilon}}
\vert \nabla v_{1,n}^{\epsilon}(0) \vert^2 \, dx \, dz \leq
C \, T \\
&+\frac{\displaystyle \epsilon}{\displaystyle 2} 
\displaystyle \int_{\Gamma_{\epsilon}} c_1(x,z) 
(u_{1,n}^{\epsilon}-v_{1,n}^{\epsilon})^2_{+} (0) \, d\sigma_{\epsilon}
-\frac{\displaystyle \epsilon}{\displaystyle 2}
\displaystyle \int_{\Gamma_{\epsilon}} c_1(x,z)
(u_{1,n}^{\epsilon}-v_{1,n}^{\epsilon})^2_{+} \, d\sigma_{\epsilon}
\end{eqnarray}
Hence, taking into account that the last term on the right-hand side
of Eq.(\ref{4.34}) is negative, one has

\begin{eqnarray} \label{4.34a} \nonumber
& \displaystyle \int_{0}^{t} ds \,
\Vert  \partial_s u_{1,n}^{\epsilon}
\Vert^2_{L^2(\Pi_{\epsilon})}+
2 \, \displaystyle \int_{0}^{t} ds \,
\Vert \partial_s v_{1,n}^{\epsilon}
\Vert^2_{L^2(\Omega_{\epsilon})}
+2 \,{\displaystyle \epsilon^2} \,  D_1 \,
\Vert \nabla_x u_{1,n}^{\epsilon} \Vert^2_{L^2(\Pi_{\epsilon})} \\ \nonumber
&+2 \, \tilde{D}_1 \Vert \partial_z u_{1,n}^{\epsilon} 
\Vert^2_{L^2(\Pi_{\epsilon})}
+2 \,d_1 \Vert \nabla v_{1,n}^{\epsilon} 
\Vert^2_{L^2(\Omega_{\epsilon})} 
\leq 4 \, C \, T \\ \nonumber
&+2 \, D_1 \,\displaystyle \int_{\Pi_{\epsilon}}
\vert \nabla_x u_{1,n}^{\epsilon}(0) \vert^2 \, dx \, dz+
2 \, \tilde{D}_1 \, \displaystyle \int_{\Pi_{\epsilon}}
\vert \partial_z u_{1,n}^{\epsilon}(0) \vert^2 \, dx \, dz \\
&+2 \, d_1 \,\displaystyle \int_{\Omega_{\epsilon}}
\vert \nabla v_{1,n}^{\epsilon}(0) \vert^2 \, dx \, dz
+2 \,\displaystyle \int_{\Gamma_{\epsilon}} c_1(x,z)
(u_{1,n}^{\epsilon}-v_{1,n}^{\epsilon})^2_{+}(0) \, d\sigma_{\epsilon}
\end{eqnarray}
Since the sequences
$u_{1,n}^{\epsilon}(0)$ and $v_{1,n}^{\epsilon}(0)$ are bounded, it follows:

\begin{eqnarray} \label{4.35} \nonumber
&\displaystyle \int_{0}^{T} ds \, \Vert  \partial_s u_{1,n}^{\epsilon}
\Vert^2_{L^2(\Pi_{\epsilon})}+
2 \,\displaystyle \int_{0}^{T} ds \, \Vert \partial_s v_{1,n}^{\epsilon}
\Vert^2_{L^2(\Omega_{\epsilon})}+
2 \, \epsilon^2 \,  D_1 \, 
\Vert \nabla_x u_{1,n}^{\epsilon} \Vert^2_{L^2(\Pi_{\epsilon})} \\
&+2 \, \tilde{D}_1 \, \Vert \partial_z u_{1,n}^{\epsilon} 
\Vert^2_{L^2(\Pi_{\epsilon})}
+2 \, d_1 \,
\Vert \nabla v_{1,n}^{\epsilon} \Vert^2_{L^2(\Omega_{\epsilon})} 
\leq \tilde {C} \; \; \; \; \; \; \mbox{for} \; \;t \in [0,T]
\end{eqnarray}
where $\tilde {C}$ is a  positive constant independent of $n$ and $\epsilon$.

In the case $1 < i \leq M$, by testing Eqs. (\ref{2.7a}) and (\ref{2.8a})
with $\psi_i=\partial_t u_{i,n}^{\epsilon}(t, \cdot)$ and
$\phi_i=\partial_t v_{i,n}^{\epsilon}(t, \cdot)$, respectively, one obtains:

\begin{eqnarray} \label{add3} \nonumber
& \frac{\displaystyle 1}{\displaystyle 4}
\Vert \partial_t u_{i,n}^{\epsilon} \Vert^2_{L^2(\Pi_{\epsilon})} 
+\frac{\displaystyle \epsilon^2}{\displaystyle 2} \,  D_i \,
\partial_t \displaystyle 
\int_{\Pi_{\epsilon}} 
\vert \nabla_x u_{i,n}^{\epsilon} \vert^2 \, dx \, dz
+\frac{\displaystyle \tilde{D}_i}{\displaystyle 2} \, 
\partial_t \displaystyle \int_{\Pi_{\epsilon}} \vert \partial_z 
u_{i,n}^{\epsilon} \vert^2 \, dx \, dz\\
&\leq C 
-\epsilon \displaystyle \int_{\Gamma_{\epsilon}} c_i(x,z) 
(u_{i,n}^{\epsilon}-v_{i,n}^{\epsilon})_{+} \, (\partial_t u_{i,n}^{\epsilon})
\, d\sigma_{\epsilon}
\end{eqnarray}

\begin{eqnarray} \label{add4} \nonumber
& \frac{\displaystyle 1}{\displaystyle 4}
\Vert \partial_t v_{i,n}^{\epsilon} \Vert^2_{L^2(\Omega_{\epsilon})} 
+\frac{\displaystyle d_i}{\displaystyle 2} \, 
\partial_t \displaystyle 
\int_{\Omega_{\epsilon}} 
\vert \nabla v_{i,n}^{\epsilon} \vert^2 \, dx \, dz \\
&\leq \tilde{C} 
+\epsilon \displaystyle \int_{\Gamma_{\epsilon}} c_i(x,z) 
(u_{i,n}^{\epsilon}-v_{i,n}^{\epsilon})_{+} \, (\partial_t v_{i,n}^{\epsilon})
\, d\sigma_{\epsilon}
\end{eqnarray}
due to the boundedness of the coagulation terms given by (\ref{2.6}).
In Eqs. (\ref{add3}) and (\ref{add4}), $C$ and $\tilde{C}$ are two positive
constants which depend on $\tilde{M}$.
Adding the two inequalities (\ref{add3}) and (\ref{add4}), and applying
exactly the same arguments considered for $i=1$, we obtain that
also the functions $u_{i,n}^{\epsilon}$ and $v_{i,n}^{\epsilon}$
($1 < i \leq M$) satisfy Eq. (\ref{4.35}).

Thus, combining the estimates (\ref{4.26}) and (\ref{4.35}), one concludes
that:

$\{u_{i,n}^{\epsilon}\}$ is bounded in
$H^1 ([0,T]; L^2(\Pi_{\epsilon})) \cap L^{\infty} ([0,T]; 
H^1(\Pi_{\epsilon}))$,
and $\{v_{i,n}^{\epsilon}\}$ is bounded in
$H^1 ([0,T]; L^2(\Omega_{\epsilon})) \cap L^{\infty} ([0,T];
H^1(\Omega_{\epsilon}))$.
Hence, we can state the following proposition.

\begin{proposition} \label{conv}
Since $(u_{i,n}^{\epsilon})_{n\in \mathbb N}$ is bounded in
$L^\infty([0,T]; H^1(\Pi_{\epsilon}))$, by the Banach-Alaoglu theorem
we may assume that, up to a subsequence,
$u_{i,n}^{\epsilon} \rightharpoonup u_i^{\epsilon}$ weakly*
in
$L^\infty([0,T]; H^1(\Pi_{\epsilon})) $, i.e.
for all $\psi\in L^1([0,T]; H^1(\Pi_{\epsilon}))$

\begin{equation}\label{banach} \begin{split}
\int_0^T \int_{\Pi_{\epsilon}} & \big( \psi(t,x,z) u_{i,n}^{\epsilon} (t,x,z)+ 
\nabla \psi(t,x,z) \cdot \nabla u_{i,n}^{\epsilon} (t,x,z)\big) \, dt \, dx\, 
dz 
\\&
\longrightarrow \int_0^T \int_{\Pi_{\epsilon}} \big( \psi(t,x,z) u_i^{\epsilon} 
(t,x,z)+ \nabla \psi(t,x,z)\cdot\nabla u_i^{\epsilon}(t,x,z)\big) \, dt \, 
dx\, dz 
\end{split}
\end{equation}
as $n\to\infty$.
Since $ L^2([0,T]; H^1(\Pi_{\epsilon}))\subset  
L^1([0,T]; H^1(\Pi_{\epsilon}))$, formula \eqref{banach} holds
also for $\psi \in  L^2([0,T]; H^1(\Pi_{\epsilon}))$.

In addition, $(u_{i,n}^{\epsilon})_{n\in \mathbb N}$ is bounded in
$H^1([0,T]; L^2(\Pi_{\epsilon}))$, thus we can also assume
that $u_{i,n}^{\epsilon} \rightharpoonup u_i^{\epsilon}$ weakly in
$ H^1([0,T]; L^2(\Pi_{\epsilon}))$ as $n\to\infty$.

Finally, since in particular
$$
\mbox{$(u_{i,n}^{\epsilon})_{n\in \mathbb N}$ is bounded in}
\quad H^1([0,T]; L^2(\Pi_{\epsilon})) \cap
L^\infty([0,T]; H^1(\Pi_{\epsilon})),
$$
by the Aubin-Lions-Simon theorem (\cite{BF}, Theorem II.5.16, p. 102)
(see Appendix B)  we can infer that

$u_{i,n}^{\epsilon} \rightarrow u_i^{\epsilon}$ strongly in
$C^0([0,T]; L^2(\Pi_{\epsilon}))$ as $n\to\infty$.

\end{proposition}
An analogous proposition can be proved also for the sequence
$(v_{i,n}^{\epsilon})_{n\in \mathbb N}$.

Now, integrating Eq. (\ref{4.5}) with respect to time and using as a test
function $\psi_1=\phi(t) \zeta_1 (x,z)$, with $\phi \in {\mathcal D} ([0,T])$
and $\zeta_1 \in H^1(\Pi_{\epsilon})$, we get

\begin{eqnarray} \label{4.36} \nonumber
&\displaystyle \int_{0}^{T} \int_{\Pi_{\epsilon}} 
\partial_t u_{1,n}^{\epsilon} \, \phi(t) \, \zeta_1(x,z) \, dt \, dx \, dz+
\epsilon^2 \, \displaystyle \int_{0}^{T} \int_{\Pi_{\epsilon}} D_1 
\,\nabla_x u_{1,n}^{\epsilon} \cdot \nabla_x \zeta_1 \, \phi(t) 
\, dt \, dx \, dz \\ \nonumber
&+\displaystyle \int_{0}^{T} \int_{\Pi_{\epsilon}} \tilde{D}_1 \,
\partial_z u_{1,n}^{\epsilon} \cdot \partial_z \zeta_1 \, \phi(t)
\, dt \, dx \, dz \\ \nonumber
&+ \epsilon \,
\displaystyle \int_{0}^{T} \int_{\Gamma_{\epsilon}} c_1(x,z) \,
(u_{1,n}^{\epsilon}-v_{1,n}^{\epsilon})_{+} \, \phi(t) \, \zeta_1(x,z) 
\, dt \, d\sigma_{\epsilon}\\ \nonumber
&=\displaystyle \int_{0}^{T} \int_{\Pi_{\epsilon}} 
L_1^{\tilde M} (u_{i,n}^{\epsilon}) \, \phi(t) \, \zeta_1(x,z) 
\, dt \, dx \, dz \\
&+\displaystyle \int_{0}^{T} \int_{\Pi_{\epsilon}}
f^{\epsilon}(t,x,z) \, \phi(t) \, \zeta_1(x,z) \, dt \, dx \, dz
\end{eqnarray}
where we denote by ${\mathcal D} ([0,T])$ the set of indefinitely
differentiable functions whose support is a compact set.

Exploiting the convergence results stated in Proposition \ref{conv}
we can pass to the limit
as $n \rightarrow \infty$ to obtain:

\begin{eqnarray} \label{4.37} \nonumber
&\displaystyle \int_{0}^{T} \int_{\Pi_{\epsilon}} 
\partial_t u_{1}^{\epsilon} \, \phi(t) \, \zeta_1(x,z)
\, dt \, dx \, dz+
\epsilon^2 \, \displaystyle \int_{0}^{T} \int_{\Pi_{\epsilon}} D_1 
\,\nabla_x u_{1}^{\epsilon} \cdot \nabla_x \zeta_1 \, \phi(t) 
\, dt \, dx \, dz\\ \nonumber
&+\displaystyle \int_{0}^{T} \int_{\Pi_{\epsilon}} \tilde{D}_1 \,
\partial_z u_{1}^{\epsilon} \cdot \partial_z \zeta_1 \, \phi(t)
\, dt \, dx \, dz \\ \nonumber
&+ \epsilon \,
\displaystyle \int_{0}^{T} \int_{\Gamma_{\epsilon}} c_1(x,z) \,
(u_{1}^{\epsilon}-v_{1}^{\epsilon})_{+} \, \phi(t) \, \zeta_1(x,z) 
\,dt \, d\sigma_{\epsilon} \\ \nonumber
&=\displaystyle \int_{0}^{T} \int_{\Pi_{\epsilon}} 
L_1^{\tilde M} (u_{i}^{\epsilon}) \, \phi(t) \, \zeta_1(x,z) 
\, dt \, dx \, dz \\ 
&+\displaystyle \int_{0}^{T} \int_{\Pi_{\epsilon}} f^{\epsilon}(t,x,z) \, 
\phi(t) \, \zeta_1(x,z) \, dt \, dx \, dz
\end{eqnarray}
where we have taken into account that the term
$L_1^{\tilde M} (u_{i}^{\epsilon})$ is Lipschitz continuous.
Eq. (\ref{4.37}), which holds for arbitrary $\phi(t) \in {\mathcal D}([0,T])$
and $\zeta_1(x,z) \in H^1(\Pi_{\epsilon})$, is exactly the variational
equation (\ref{2.7a}).

By using the same arguments handled above for
$u_{1}^{\epsilon}$, we can prove that the functions $v_{1}^{\epsilon}$
and $u_{i}^{\epsilon}$,
$v_{i}^{\epsilon}$ ($1 < i \leq M$) satisfy Eqs. (\ref{2.7a}), (\ref{2.8a}).

It remains to show that the initial conditions hold.
By the Aubin-Lions-Simon theorem it follows that
$u_{1,n}^{\epsilon} \rightarrow u_1^{\epsilon}$ strongly in
$C^0([0,T]; L^2(\Pi_{\epsilon}))$ as $n\to\infty$.
Since $u_{1,n}^{\epsilon}(t=0)=u_{1,n}^{\epsilon,0}$ and
$u_{1,n}^{\epsilon,0} \rightarrow U^{\epsilon}_1(x,z)$ strongly in
$H^1(\Pi_{\epsilon})$
as $n\to\infty$, we conclude that $u_{1}^{\epsilon}(0,x,z)=U^{\epsilon}_1(x,z)$.
Following the same line of arguments, we also obtain
$v_{1}^{\epsilon}(0,x,z)=0$ along with
$u_{i}^{\epsilon}(0,x,z)=v_{i}^{\epsilon}(0,x,z)=0$ ($1 < i \leq M$).

\end{proof}

\section{Positivity and boundedness of solutions}\label{sec3}

\begin{lemma} \label{l3.1}
For a given small $\epsilon >0$,
let $u_i^{\epsilon}(t,x,z)$ and $v_i^{\epsilon}(t,x,z)$ ($1 \leq i \leq M$)
be  solutions of the system (\ref{2.1a})
in the sense of the Definition \ref{d2.1}.
Then $0 \leq u_i^{\epsilon}(t,x,z) < M_i$ a.e. in $(0,T) \times \Pi_{\epsilon}$
and $0 \leq v_i^{\epsilon}(t,x,z) < \overline{M}_i$ a.e. in $(0,T) \times 
\Omega_{\epsilon}$, where $M_i >0$ and $\overline{M}_i >0$ are
constants independent of $\tilde M$ and $\epsilon$.
\end{lemma}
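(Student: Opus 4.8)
The plan is to prove both assertions by testing the weak formulation \eqref{2.7a}--\eqref{2.8a} with suitable truncations of the solutions themselves, exploiting the sign structure of the truncated reaction terms \eqref{2.4}--\eqref{2.5}, of the transmission boundary integrals and of the data, together with an induction on the cluster index $m$. Since the constants produced will turn out to be independent of $\tilde M$, once they are found one fixes $\tilde M$ larger than all of them and the truncation becomes inactive, so the bounds are valid for the original system.

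\emph{Positivity.} For fixed $m$ I would first test the $v_m^\epsilon$-equation \eqref{2.8a} with $\phi_m=\min(v_m^\epsilon,0)=:(v_m^\epsilon)_-\in H^1(\Omega_\epsilon)$. The parabolic term yields $\frac12\frac{d}{dt}\Vert(v_m^\epsilon)_-\Vert^2_{L^2(\Omega_\epsilon)}$, the diffusion term yields $d_m\Vert\nabla(v_m^\epsilon)_-\Vert^2_{L^2}\ge 0$, and the boundary term $-\epsilon\int_{\Gamma_\epsilon}c_m(u_m^\epsilon-v_m^\epsilon)_+(v_m^\epsilon)_-\,d\sigma_\epsilon\ge 0$ because $c_m\ge 0$, $(u_m^\epsilon-v_m^\epsilon)_+\ge 0$ and $(v_m^\epsilon)_-\le 0$. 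On the right-hand side the loss part of $N_m^{\tilde M}$ vanishes against $(v_m^\epsilon)_-$ since $\sigma_{\tilde M}(s)\cdot\min(s,0)\equiv 0$ by \eqref{2.6}, while the gain part (a nonnegative combination of products $\sigma_{\tilde M}(v_j^\epsilon)\sigma_{\tilde M}(v_{m-j}^\epsilon)$, including $m=M$) times $(v_m^\epsilon)_-\le 0$ is $\le 0$. Hence $\frac{d}{dt}\Vert(v_m^\epsilon)_-\Vert^2_{L^2}\le 0$, and since $v_m^\epsilon(0)=0$ we get $v_m^\epsilon\ge 0$ for every $m$. Knowing $v_m^\epsilon\ge 0$, the same computation applied to \eqref{2.7a} tested with $(u_m^\epsilon)_-$ works: the boundary term $\epsilon\int_{\Gamma_\epsilon}c_m(u_m^\epsilon-v_m^\epsilon)_+(u_m^\epsilon)_-\,d\sigma_\epsilon$ now vanishes, because on $\{u_m^\epsilon<0\}$ one has $u_m^\epsilon-v_m^\epsilon<0$, so $(u_m^\epsilon-v_m^\epsilon)_+=0$; the reaction term is controlled as above; and the source contributes $\int_{\Pi_\epsilon}f_m^\epsilon(u_m^\epsilon)_-\le 0$ since $f_1^\epsilon=f^\epsilon\ge 0$ and $f_m^\epsilon=0$ for $m>1$. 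With $u_m^\epsilon(0)=U_m^\epsilon\ge 0$ this gives $u_m^\epsilon\ge 0$.

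\emph{Boundedness.} I would construct, by induction on $m$, spatially constant supersolutions. For $m=1$, since $u_j^\epsilon\ge 0$ the truncated loss term $-\sum_j a_{1,j}\sigma_{\tilde M}(u_1^\epsilon)\sigma_{\tilde M}(u_j^\epsilon)\le 0$; testing \eqref{2.7a} with $(u_1^\epsilon-\bar u(t))_+$ where $\bar u(t):=\Vert U_1^\epsilon\Vert_{L^\infty(\Pi_\epsilon)}+t\,\Vert f^\epsilon\Vert_{L^\infty([0,T]\times\Pi_\epsilon)}$ (both quantities bounded uniformly in $\epsilon$ by the standing assumptions on the data), the boundary term is $\ge 0$, the reaction term is $\le 0$, and $\int f^\epsilon(u_1^\epsilon-\bar u)_+\le\bar u'(t)\int(u_1^\epsilon-\bar u)_+$ cancels the contribution of $\partial_t\bar u$, so $\frac{d}{dt}\Vert(u_1^\epsilon-\bar u)_+\Vert^2_{L^2}\le 0$; as $(u_1^\epsilon-\bar u)_+(0)=0$ we obtain $u_1^\epsilon\le M_1:=\Vert U_1^\epsilon\Vert_{L^\infty}+T\Vert f^\epsilon\Vert_{L^\infty}$, independent of $\tilde M$ and $\epsilon$. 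For $1<m\le M$, assuming $0\le u_j^\epsilon\le M_j$ for $j<m$, one bounds the gain part of $L_m^{\tilde M}$ by the constant $K_m$ obtained replacing each $\sigma_{\tilde M}(u_j^\epsilon)$ by $M_j$ (legitimate since $\sigma_{\tilde M}(s)\le s$), drops the nonpositive loss part, and repeats the comparison with $\bar u(t):=K_m t$ (initial datum $U_m^\epsilon=0$) to get $u_m^\epsilon\le M_m:=K_mT$. The bounds $v_m^\epsilon\le\overline M_m$ follow in the same way, by induction on $m$ with a constant $\overline K_m$ built from the $b_{j,k}$ and the $\overline M_j$, $j<m$; the one genuine difficulty — and the step I expect to be the main obstacle — is the influx boundary term $-\epsilon\int_{\Gamma_\epsilon}c_m(u_m^\epsilon-v_m^\epsilon)_+(v_m^\epsilon-\bar v)_+\,d\sigma_\epsilon$ in the $v_m^\epsilon$-equation, which has the unfavourable sign. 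The remedy is to choose the supersolution level always above the bound on $u_m^\epsilon$, namely $\bar v(t):=M_m+\overline K_m t\ge M_m$: then wherever $(v_m^\epsilon-\bar v)_+>0$ one has $v_m^\epsilon>\bar v\ge M_m\ge u_m^\epsilon$ (the last inequality holding also on $\Gamma_\epsilon$, by taking the trace of the a.e. bound $u_m^\epsilon\le M_m$), hence $(u_m^\epsilon-v_m^\epsilon)_+=0$ and that term disappears; one then reads off $v_m^\epsilon\le\overline M_m:=M_m+\overline K_mT$. Finally, since all $M_i,\overline M_i$ are independent of $\tilde M$, fixing $\tilde M$ above their maximum makes the truncation vanish, and slightly enlarging the constants turns the inequalities into the strict bounds $u_i^\epsilon<M_i$, $v_i^\epsilon<\overline M_i$.
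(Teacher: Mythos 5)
Your proposal is correct, but it takes a genuinely different route from the paper in both halves of the lemma. For positivity, the paper tests \eqref{2.7a} and \eqref{2.8a} with $-u_i^{\epsilon,-}$ and $-v_i^{\epsilon,-}$, \emph{adds} the two resulting inequalities, estimates the surviving transmission term via the interpolation-trace inequality \eqref{A.7}, and closes with Gronwall; you instead decouple the system, proving $v_i^{\epsilon}\ge 0$ first (the boundary term in the $v$-equation has a favourable sign irrespective of $u$) and then $u_i^{\epsilon}\ge 0$, using that $(u_i^{\epsilon}-v_i^{\epsilon})_+\,(u_i^{\epsilon})_-\equiv 0$ once $v_i^{\epsilon}\ge 0$ --- this eliminates both the trace inequality and Gronwall from that step. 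For boundedness, the paper runs an $L^p$ argument (test functions $p(u_i^{\epsilon})^{p-1}$ and $p[(v_i^{\epsilon}-\overline m_i)^+]^{p-1}$, Young plus Gronwall, then $p\to\infty$), yielding constants of the form $(\Vert U_1^{\epsilon}\Vert_{\infty}+\Vert f^{\epsilon}\Vert_{\infty})e^{T}$; you compare with explicit spatially constant, affine-in-time supersolutions and test with $(u_m^{\epsilon}-\bar u(t))_+$, $(v_m^{\epsilon}-\bar v(t))_+$, which avoids the $p\to\infty$ limit and produces linear-in-$T$ constants. The decisive device is the same in both proofs: the unfavourable influx term in the $v$-equation is annihilated by choosing the comparison level for $v_m^{\epsilon}$ at least as large as the previously established $L^\infty$ bound for $u_m^{\epsilon}$ (your condition $\bar v\ge M_m$ plays exactly the role of the paper's requirement $M_1\le \overline M_1$ and of the vanishing term $I_1$ in \eqref{10.12}), and in both cases this uses that the trace of an a.e.\ bounded $H^1$ function inherits the bound, which you correctly point out. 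One caveat, shared with the paper rather than specific to your argument: the claimed independence of $M_i,\overline M_i$ from $\epsilon$ tacitly requires $\Vert U_1^{\epsilon}\Vert_{L^{\infty}(\Pi_{\epsilon})}$ and $\Vert f^{\epsilon}\Vert_{L^{\infty}([0,T]\times\Pi_{\epsilon})}$ to be bounded uniformly in $\epsilon$.
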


\begin{proof}
In the case $i=1$,
let us test Eq. (\ref{2.7a}) with the function
$\psi_1=-u_1^{\epsilon, -}(t, \cdot)$:

\begin{eqnarray} \nonumber
&\displaystyle \int_{\Pi_{\epsilon}} \partial_t u_1^{\epsilon} \, 
(-u_1^{\epsilon, -}) \, dx \, dz+
\epsilon^2 \displaystyle \int_{\Pi_{\epsilon}} D_1 
\nabla_x u_1^{\epsilon} \cdot \nabla_x (-u_1^{\epsilon, -}) \, dx \, dz 
\\\nonumber
&+\displaystyle \int_{\Pi_{\epsilon}} \tilde{D}_1 \, 
\partial_z u_1^{\epsilon} \cdot \partial_z (-u_1^{\epsilon, -})
\, dx \, dz
+\epsilon \displaystyle \int_{\Gamma_{\epsilon}} c_1(x,z) 
(u_1^{\epsilon}-v_1^{\epsilon})_{+} \, (-u_1^{\epsilon, -})   
\, d\sigma_{\epsilon} \\
&=\displaystyle \int_{\Pi_{\epsilon}} L_1^{\tilde M}(u^{\epsilon}) \, 
(-u_1^{\epsilon, -}) \, dx \, dz
+\displaystyle \int_{\Pi_{\epsilon}} 
f^{\epsilon}(t,x,z) \, (-u_1^{\epsilon, -}) \, dx \, dz
\end{eqnarray}
Decomposing the function $u_1^{\epsilon}$ in its positive and negative parts,
we get:

\begin{eqnarray} \label{2.9} \nonumber
&\displaystyle \int_{\Pi_{\epsilon}} \frac{\partial u_1^{\epsilon, -}}
{\partial t} \, u_1^{\epsilon, -} \, dx \, dz
+\epsilon^2 \displaystyle \int_{\Pi_{\epsilon}}
 D_1 \, \vert \nabla_x u_1^{\epsilon, -} \vert^2 \, dx \, dz \\\nonumber
&+\displaystyle \int_{\Pi_{\epsilon}} \tilde{D}_1 \, \vert \partial_z
u_1^{\epsilon, -} \vert^2 \, dx \, dz 
= \epsilon \displaystyle \int_{\Gamma_{\epsilon}} c_1(x,z) (u_1^{\epsilon}-
v_1^{\epsilon})_{+}
\, u_1^{\epsilon, -} \, d\sigma_{\epsilon} \\
&+\displaystyle \int_{\Pi_{\epsilon}} \bigg[ \sum_{j=1}^{M} a_{1,j} \,
\sigma_{\tilde M} (u_1^{\epsilon}) \, \sigma_{\tilde M} (u_j^{\epsilon}) 
\bigg] \, u_1^{\epsilon, -} \, dx \, dz
-\displaystyle \int_{\Pi_{\epsilon}}
f^{\epsilon}(t,x,z) \, u_1^{\epsilon, -} \, dx \, dz
\end{eqnarray}
Since the last but one term
on the right-hand side  is always zero and the last one is negative,
one obtains:

\begin{eqnarray} \label{2.10} \nonumber
&\frac{\displaystyle 1}{\displaystyle 2} \partial_t \Vert u_1^{\epsilon, -}
\Vert^2_{L^2(\Pi_{\epsilon})}+\epsilon^2 D_1 \, \Vert
\nabla_x u_1^{\epsilon, -} \Vert^2_{L^2(\Pi_{\epsilon})} 
+\tilde{D}_1 \, \Vert \partial_z u_1^{\epsilon, -} 
\Vert^2_{L^2(\Pi_{\epsilon})} \\
&\leq
\epsilon \displaystyle \int_{\Gamma_{\epsilon}} c_1(x,z) \, (u_1^{\epsilon}-
v_1^{\epsilon})_{+} \, u_1^{\epsilon, -} \, d\sigma_{\epsilon}
\end{eqnarray}

Let us now take $\phi_1=-v_1^{\epsilon, -}(t, \cdot)$ as test function in
Eq. (\ref{2.8a}) and decompose the function $v_1^{\epsilon}$ in its
positive and negative parts:

\begin{eqnarray} \label{2.11} \nonumber
&\displaystyle \int_{\Omega_{\epsilon}} \frac{\partial v_1^{\epsilon, -}}
{\partial t} \, v_1^{\epsilon, -} \, dx \, dz
+ \displaystyle \int_{\Omega_{\epsilon}}
 d_1 \, \vert \nabla v_1^{\epsilon, -} \vert^2 \, dx \, dz 
=
-\epsilon \displaystyle \int_{\Gamma_{\epsilon}} c_1(x,z) 
(u_1^{\epsilon}-v_1^{\epsilon})_{+}
\, v_1^{\epsilon, -} \, d\sigma_{\epsilon} \\
&+\displaystyle \int_{\Omega_{\epsilon}} \bigg[ \sum_{j=1}^{M} b_{1,j} \,
\sigma_{\tilde M} (v_1^{\epsilon}) \, \sigma_{\tilde M} (v_j^{\epsilon}) 
\bigg] \, v_1^{\epsilon, -} \, dx \, dz
\end{eqnarray}
Since the last term on the right-hand side  is always zero, Eq. (\ref{2.11})
reduces to:

\begin{equation} \label{2.12}
\frac{\displaystyle 1}{\displaystyle 2} \partial_t \Vert v_1^{\epsilon, -}
\Vert^2_{L^2(\Omega_{\epsilon})}+d_1 \, \Vert
\nabla v_1^{\epsilon, -} \Vert^2_{L^2(\Omega_{\epsilon})}=
-\epsilon \displaystyle \int_{\Gamma_{\epsilon}} c_1(x,z) \, (u_1^{\epsilon}-
v_1^{\epsilon})_{+} \, v_1^{\epsilon, -} \, d\sigma_{\epsilon}
\end{equation}
Adding Eqs. (\ref{2.10}) and (\ref{2.12}) we obtain:

\begin{eqnarray} \label{2.13} \nonumber
&\frac{\displaystyle 1}{\displaystyle 2} \partial_t \Vert u_1^{\epsilon, -}
\Vert^2_{L^2(\Pi_{\epsilon})}+
\frac{\displaystyle 1}{\displaystyle 2} \partial_t \Vert v_1^{\epsilon, -}
\Vert^2_{L^2(\Omega_{\epsilon})}+
\epsilon^2 D_1 \, \Vert
\nabla_x u_1^{\epsilon, -} \Vert^2_{L^2(\Pi_{\epsilon})} \\ \nonumber
&+\tilde{D}_1 \, \Vert \partial_z u_1^{\epsilon, -} 
\Vert^2_{L^2(\Pi_{\epsilon})}
+d_1 \, \Vert
\nabla v_1^{\epsilon, -} \Vert^2_{L^2(\Omega_{\epsilon})} \\
&\leq \epsilon \displaystyle \int_{\Gamma_{\epsilon}} c_1(x,z) \, 
(u_1^{\epsilon}-
v_1^{\epsilon})_{+} \, (u_1^{\epsilon, -}-v_1^{\epsilon, -}) \, 
d\sigma_{\epsilon}
\end{eqnarray}
Let us now estimate the term on the right-hand side of Eq. (\ref{2.13}):

\begin{eqnarray} \label{2.14} \nonumber
&\epsilon \displaystyle \int_{\Gamma_{\epsilon}} c_1(x,z) \, (u_1^{\epsilon}-
v_1^{\epsilon})_{+} \, (u_1^{\epsilon, -}-v_1^{\epsilon, -}) 
\, d\sigma_{\epsilon} 
\leq \epsilon \displaystyle \int_{\Gamma_{\epsilon}} c_1(x,z) \,
{\mathcal H}(u_1^{\epsilon}-v_1^{\epsilon}) (u_1^{\epsilon}-v_1^{\epsilon}) \,
u_1^{\epsilon, -} \, d\sigma_{\epsilon} \\ 
&\leq \epsilon \displaystyle \int_{\Gamma_{\epsilon}} c_1(x,z) \,
{\mathcal H}(u_1^{\epsilon}-v_1^{\epsilon}) (u_1^{\epsilon, -} \,
v_1^{\epsilon, -}) \, d\sigma_{\epsilon} 
\leq \epsilon \Vert c_1(x,z) \Vert_{L^{\infty}(\Gamma_{\epsilon})} 
\displaystyle \int_{\Gamma_{\epsilon}} u_1^{\epsilon, -} \, v_1^{\epsilon, -}
\, d\sigma_{\epsilon}
\end{eqnarray}
where we denote by ${\mathcal H}(\cdot)$ the Heaviside function:

\begin{equation} \nonumber
{\mathcal H}(s):=\begin{cases}
0, \; \; \; \; s<0 \\
1, \; \; \; \; s \geq 0. 
\end{cases}
\end{equation}
Exploiting the H\"older and  Young
inequalities along with  the generalized interpolation-trace inequality
(\ref{A.7}),
Eq. (\ref{2.14}) becomes

\begin{eqnarray} \label{2.22} \nonumber
&\epsilon \displaystyle \int_{\Gamma_{\epsilon}} c_1(x,z) \, (u_1^{\epsilon}-
v_1^{\epsilon})_{+} \, (u_1^{\epsilon, -}-v_1^{\epsilon, -})
\, d\sigma_{\epsilon} \\ \nonumber
&\leq \frac{\displaystyle \epsilon}{\displaystyle 2}
\Vert c_1(x,z) \Vert_{L^{\infty}(\Gamma_{\epsilon})} \bigg[
\Vert u_1^{\epsilon, -} \Vert^2_{L^2(\Gamma_{\epsilon})}+
\Vert v_1^{\epsilon, -} \Vert^2_{L^2(\Gamma_{\epsilon})} \bigg] \\ \nonumber
&\leq C_1 \, \epsilon \, \eta \bigg[ \Vert \nabla u_1^{\epsilon, -} 
\Vert^2_{L^2(\Pi_{\epsilon})}+
\Vert \nabla v_1^{\epsilon, -} \Vert^2_{L^2(\Omega_{\epsilon})} \bigg] \\
&+C_2 \, \epsilon \, \eta^{-1} \bigg[ \Vert u_1^{\epsilon, -}
\Vert^2_{L^2(\Pi_{\epsilon})}+
\Vert v_1^{\epsilon, -} \Vert^2_{L^2(\Omega_{\epsilon})} \bigg]
\end{eqnarray}
where $\eta$ is a small constant.
Finally from Eqs. (\ref{2.13}) and (\ref{2.22}) it follows that

\begin{eqnarray} \label{2.23} \nonumber
&\frac{\displaystyle 1}{\displaystyle 2} \partial_t \Vert u_1^{\epsilon, -}
\Vert^2_{L^2(\Pi_{\epsilon})}+
\frac{\displaystyle 1}{\displaystyle 2} \partial_t \Vert v_1^{\epsilon, -}
\Vert^2_{L^2(\Omega_{\epsilon})}+
(\epsilon^2 D_1-C_1 \, \epsilon \, \eta) \, \Vert
\nabla_x u_1^{\epsilon, -} \Vert^2_{L^2(\Pi_{\epsilon})} \\ \nonumber
&+(\tilde{D}_1-C_1 \, \epsilon \, \eta) \, \Vert
\partial_z u_1^{\epsilon, -} \Vert^2_{L^2(\Pi_{\epsilon})}
+(d_1-C_1 \, \epsilon \, \eta) \, \Vert
\nabla v_1^{\epsilon, -} \Vert^2_{L^2(\Omega_{\epsilon})} \\
&\leq C_2 \, \epsilon \, \eta^{-1} \bigg[ 
\Vert u_1^{\epsilon, -} \Vert^2_{L^2(\Pi_{\epsilon})}+
\Vert v_1^{\epsilon, -} \Vert^2_{L^2(\Omega_{\epsilon})} \bigg]
\end{eqnarray}
If one chooses $\eta < min \{\frac{\epsilon \, D_1}{C_1},
\frac{\tilde{D}_1}{\epsilon \, C_1}, 
\frac{d_1}{\epsilon \, C_1} \}$, Eq. (\ref{2.23}) reduces to:

\begin{equation} \label{2.24} 
\frac{\displaystyle 1}{\displaystyle 2} \partial_t \Vert u_1^{\epsilon, -}
\Vert^2_{L^2(\Pi_{\epsilon})}+
\frac{\displaystyle 1}{\displaystyle 2} \partial_t \Vert v_1^{\epsilon, -}
\Vert^2_{L^2(\Omega_{\epsilon})} \leq C_2 \, \epsilon \, \eta^{-1} \bigg[
\Vert u_1^{\epsilon, -} \Vert^2_{L^2(\Pi_{\epsilon})}+
\Vert v_1^{\epsilon, -} \Vert^2_{L^2(\Omega_{\epsilon})} \bigg]
\end{equation}

Setting the initial conditions: $u_1^{\epsilon, -}(0) \equiv 0$ and
$v_1^{\epsilon, -}(0) \equiv 0$, Gronwall's lemma gives:

\begin{equation} \label{2.25}
\Vert u_1^{\epsilon, -} (t, \cdot)\Vert^2_{L^2(\Pi_{\epsilon})}+
\Vert v_1^{\epsilon, -} (t, \cdot)\Vert^2_{L^2(\Omega_{\epsilon})} \leq 0 
\end{equation}
that is, $u_1^{\epsilon} \geq 0$ a.e. in $\Pi_{\epsilon}$ and
$v_1^{\epsilon} \geq 0$ a.e. in $\Omega_{\epsilon}$, for all $t \in [0,T]$.

In the case $1 < i \leq M$, by testing Eqs. (\ref{2.7a}) and (\ref{2.8a})
with $\psi_i=-u_i^{\epsilon, -}(t, \cdot)$ and
$\phi_i=-v_i^{\epsilon, -}(t, \cdot)$, respectively, and decomposing
the functions $u_i^{\epsilon}$ and $v_i^{\epsilon}$ in the positive and
negative parts, one gets:

\begin{eqnarray} \label{add10} \nonumber
&\displaystyle \int_{\Pi_{\epsilon}} \frac{\partial u_i^{\epsilon, -}}
{\partial t} \, u_i^{\epsilon, -} \, dx \, dz
+\epsilon^2 \displaystyle \int_{\Pi_{\epsilon}}
 D_i \, \vert \nabla_x u_i^{\epsilon, -} \vert^2 \, dx \, dz \\ \nonumber
&+\displaystyle \int_{\Pi_{\epsilon}} \tilde{D}_i \, \vert \partial_z
u_i^{\epsilon, -} \vert^2 \, dx \, dz
= \epsilon \displaystyle \int_{\Gamma_{\epsilon}} c_i(x,z) (u_i^{\epsilon}-
v_i^{\epsilon})_{+}
\, u_i^{\epsilon, -} \, d\sigma_{\epsilon} \\ \nonumber
&-\frac{\displaystyle 1}{\displaystyle 2}
\displaystyle \int_{\Pi_{\epsilon}} \bigg[ \sum_{j=1}^{i-1} a_{j,i-j} \,
\sigma_{\tilde M} (u_j^{\epsilon}) \, \sigma_{\tilde M} (u_{i-j}^{\epsilon})
\bigg] \, u_i^{\epsilon, -} \, dx \, dz\\
&+\displaystyle \int_{\Pi_{\epsilon}} \bigg[ \sum_{j=1}^{M} a_{i,j} \,
\sigma_{\tilde M} (u_i^{\epsilon}) \, \sigma_{\tilde M} (u_j^{\epsilon}) 
\bigg] \, u_i^{\epsilon, -} \, dx \, dz
\end{eqnarray}

\begin{eqnarray} \label{add11} \nonumber
&\displaystyle \int_{\Omega_{\epsilon}} \frac{\partial v_i^{\epsilon, -}}
{\partial t} \, v_i^{\epsilon, -} \, dx \, dz
+ \displaystyle \int_{\Omega_{\epsilon}}
 d_i \, \vert \nabla v_i^{\epsilon, -} \vert^2 \, dx \, dz \\\nonumber
&=
-\epsilon \displaystyle \int_{\Gamma_{\epsilon}} c_i(x,z) 
(u_i^{\epsilon}-v_i^{\epsilon})_{+}
\, v_i^{\epsilon, -} \, d\sigma_{\epsilon} \\ \nonumber
&-\frac{\displaystyle 1}{\displaystyle 2}
\displaystyle \int_{\Omega_{\epsilon}} \bigg[ \sum_{j=1}^{i-1} b_{j,i-j} \,
\sigma_{\tilde M} (v_j^{\epsilon}) \, \sigma_{\tilde M} (v_{i-j}^{\epsilon})
\bigg] \, v_i^{\epsilon, -} \, dx \, dz \\
&+\displaystyle \int_{\Omega_{\epsilon}} \bigg[ \sum_{j=1}^{M} b_{i,j} \,
\sigma_{\tilde M} (v_i^{\epsilon}) \, \sigma_{\tilde M} (v_j^{\epsilon}) 
\bigg] \, v_i^{\epsilon, -} \, dx \, dz
\end{eqnarray}
Since in both Eqs. (\ref{add10}) and (\ref{add11}) the last but one term on the
right-hand side is negative and the last one always zero, we obtain that
also the functions $u_i^{\epsilon, -}$ and $v_i^{\epsilon, -}$
($1 < i \leq M$) satisfy Eqs. (\ref{2.10}) and  (\ref{2.12}), respectively.
Therefore, applying exactly the same arguments considered for $i=1$, we
conclude that
$u_i^{\epsilon} \geq 0$ a.e. in $\Pi_{\epsilon}$ and
$v_i^{\epsilon} \geq 0$ a.e. in $\Omega_{\epsilon}$, for all $t \in [0,T]$.

Let us now prove the boundedness of solutions.
In the case $i=1$,
we test Eq. (\ref{2.7a}) with $\psi_1=p (u_1^{\epsilon})^{p-1} (p \geq 2)$:

\begin{eqnarray} \label{2.26a}\nonumber
&p \displaystyle \int_{\Pi_{\epsilon}} \partial_t u_1^{\epsilon} \, 
(u_1^{\epsilon})^{p-1} \, dx \, dz+
\epsilon^2 \, p \displaystyle \int_{\Pi_{\epsilon}} D_1 \,
\nabla_x u_1^{\epsilon} \cdot \nabla_x (u_1^{\epsilon})^{p-1}
\, dx \, dz \\ \nonumber
&+p \displaystyle \int_{\Pi_{\epsilon}} \tilde{D}_1 \, 
\partial_z u_1^{\epsilon} \cdot \partial_z (u_1^{\epsilon})^{p-1}
\, dx \, dz
+\epsilon \, p \displaystyle \int_{\Gamma_{\epsilon}} c_1(x,z) 
(u_1^{\epsilon}-v_1^{\epsilon})_{+} \, (u_1^{\epsilon})^{p-1}   
d\sigma_{\epsilon} \\ \nonumber
&=-p \displaystyle \int_{\Pi_{\epsilon}} 
\bigg[ \sum_{j=1}^{M} a_{1,j} \,
\sigma_{\tilde M} (u_1^{\epsilon}) \, \sigma_{\tilde M} (u_j^{\epsilon}) 
\bigg] \, (u_1^{\epsilon})^{p-1} \, dx \, dz \\
&+p \displaystyle \int_{\Pi_{\epsilon}} f^{\epsilon}(t,x,z) \, 
(u_1^{\epsilon})^{p-1} \, dx \, dz
\end{eqnarray}
Since the last term on the left-hand side is positive
and the first term on the right-hand side is negative, Eq. (\ref{2.26a})
reduces to:

\begin{eqnarray} \label{2.26} \nonumber
&\displaystyle \int_{\Pi_{\epsilon}} \frac{\partial}{\partial t}
(u_1^{\epsilon})^{p} \, dx \, dz+
\epsilon^2 \, p \, (p-1) \displaystyle \int_{\Pi_{\epsilon}} D_1 \,
(u_1^{\epsilon})^{p-2} \, \vert \nabla_x u_1^{\epsilon} \vert^2 
\, dx \, dz \\
&+p \, (p-1) \displaystyle \int_{\Pi_{\epsilon}} \tilde{D}_1 \,
(u_1^{\epsilon})^{p-2} \, \vert \partial_z u_1^{\epsilon} \vert^2 
\, dx \, dz \leq
p \displaystyle \int_{\Pi_{\epsilon}} f^{\epsilon}(t,x,z) \,
(u_1^{\epsilon})^{p-1} \, dx \, dz
\end{eqnarray}
Taking into account that the last two terms on the left-hand side are
positive and integrating over $[0,t]$ with $t \in [0,T]$, we obtain:

\begin{equation} \label{2.27}
\displaystyle \int_0^t \int_{\Pi_{\epsilon}} 
\frac{\partial}{\partial s} (u_1^{\epsilon})^p \, ds \, dx \, dz
\leq p \displaystyle \int_0^t \int_{\Pi_{\epsilon}}
f^{\epsilon}(s,x,z) \, (u_1^{\epsilon})^{p-1} \, ds \, dx \, dz
\end{equation}
Hence,

\begin{equation} \label{addendum1}
\displaystyle \int_{\Pi_{\epsilon}} (u_1^{\epsilon})^p \, dx \, dz
\leq \displaystyle \int_{\Pi_{\epsilon}} (u_1^{\epsilon}(0))^p \, dx \, dz 
+p \, \Vert f^{\epsilon} \Vert_{L^{\infty}([0,T] \times \Pi_{\epsilon})}
\displaystyle \int_0^t \int_{\Pi_{\epsilon}} (u_1^{\epsilon})^{p-1} \, ds \, 
dx \, dz
\end{equation}
In order to estimate the last term on the right-hand side of
Eq. (\ref{addendum1}), it is
now convenient to use Young's inequality in the following form \cite{Brezis}:

\begin{equation} \label{Bineq}
a \, b \leq \eta \, a^{p'}+\eta^{1-p} \, b^p,  \; \; \; \forall a \geq 0, 
\; b \geq 0, \; \; p'=\frac{p}{p-1}
\end{equation}
with: $a=(u_1^{\epsilon})^{p-1}$,
$b=p \, \Vert f^{\epsilon} \Vert_{L^{\infty}([0,T] \times \Pi_{\epsilon})}$
and for an arbitrary $\eta >0$.
Therefore, we get:

\begin{eqnarray} \label{2.27a} \nonumber
&p \, \Vert f^{\epsilon} \Vert_{L^{\infty}([0,T] \times \Pi_{\epsilon})}
\displaystyle \int_0^t \int_{\Pi_{\epsilon}} (u_1^{\epsilon})^{p-1} \, ds \,
dx \, dz  \\ \nonumber
&\leq \displaystyle \int_0^t ds \, \int_{\Pi_{\epsilon}} p^p \, 
\Vert f^{\epsilon} \Vert^p_{L^{\infty}([0,T] \times \Pi_{\epsilon})} \,
\eta^{1-p} \, dx dz  
+\displaystyle \int_0^t ds \, \int_{\Pi_{\epsilon}} \eta \, 
(u_1^{\epsilon})^{p} \, dx \, dz \\
&\leq
p^{p-1} \, \Vert f^{\epsilon} \Vert^p_{L^{\infty}([0,T] \times \Pi_{\epsilon})}
\, \eta^{1-p} \, \vert \Pi_{\epsilon} \vert \, T+ \eta \, 
\displaystyle \int_0^t ds \, \int_{\Pi_{\epsilon}}
(u_1^{\epsilon})^{p} \, dx \, dz
\end{eqnarray}
Choosing $\eta=p$ and using the inequality (\ref{2.27a}) in (\ref{addendum1}),
we conclude that:

\begin{equation} \label{addendum2} 
\Vert u_1^{\epsilon} \Vert^p_{L^p(\Pi_{\epsilon})} \leq 
\Vert U_1^{\epsilon} \Vert^p_{L^p(\Pi_{\epsilon})}+
\Vert f^{\epsilon} \Vert^p_{L^{\infty}([0,T] \times \Pi_{\epsilon})} 
\vert \Pi_{\epsilon} \vert \, T 
+p \displaystyle \int_0^t ds \, 
\Vert u_1^{\epsilon} (s)\Vert^p_{L^p(\Pi_{\epsilon})}
\end{equation}
Applying Gronwall's inequality it follows that:

\begin{equation} \label{2.27b}
\Vert u_1^{\epsilon} \Vert^p_{L^p(\Pi_{\epsilon})} \leq
\bigg[ \Vert U_1^{\epsilon} \Vert^p_{L^p(\Pi_{\epsilon})}+
\Vert f^{\epsilon} \Vert^p_{L^{\infty}([0,T] \times \Pi_{\epsilon})} \, 
\vert \Pi_{\epsilon} \vert  \, T \bigg] \, e^{p t}  
\end{equation}
And finally

\begin{equation} \label{2.27c}
\sup_{t \in [0,T]} \lim_{p \rightarrow \infty} \bigg[
\displaystyle \int_{\Pi_{\epsilon}} (u_1^{\epsilon})^p \, dx \, dz
\bigg]^{1/p} \leq \bigg[ \Vert U_1^{\epsilon} 
\Vert_{L^{\infty}(\Pi_{\epsilon})}+\Vert f^{\epsilon} 
\Vert_{L^{\infty}([0,T] \times \Pi_{\epsilon})} \bigg] \, e^T \leq M_1
\end{equation}
where $M_1$ is a positive constant due to the boundedness of the
initial condition $U_1^{\epsilon}(x,z)$ and of the source term
$f^{\epsilon} (t,x,z)$.

We test now Eq. (\ref{2.8a}) with
$\phi_1=(v_1^{\epsilon}(t, \cdot)-\overline {M}_1)^+$, where $\overline {M}_1$
is a positive constant:

\begin{eqnarray}  \nonumber
&\displaystyle \int_{\Omega_{\epsilon}} \partial_t (v_1^{\epsilon}-
\overline {M}_1) \, (v_1^{\epsilon}-\overline {M}_1)^+ \, dx \, dz+
\displaystyle \int_{\Omega_{\epsilon}} d_1 
\nabla (v_1^{\epsilon}-\overline {M}_1) \cdot 
\nabla (v_1^{\epsilon}-\overline {M}_1)^+ \, dx \, dz \\ \nonumber
&-\epsilon \displaystyle \int_{\Gamma_{\epsilon}} c_1(x,z) 
(u_1^{\epsilon}-v_1^{\epsilon})_{+} \, (v_1^{\epsilon}-\overline {M}_1)^+
d\sigma_{\epsilon} \\
&=\displaystyle \int_{\Omega_{\epsilon}} N_1^{\tilde M}(v^{\epsilon}) \, 
(v_1^{\epsilon}-\overline {M}_1)^+ \, dx \, dz
\end{eqnarray}
Decomposing the function $(v_1^{\epsilon}-\overline {M}_1)$ in its positive
and negative parts, one obtains:

\begin{eqnarray} \label{2.28} \nonumber
&\frac{\displaystyle 1}{\displaystyle 2} \displaystyle \int_{\Omega_{\epsilon}}
\partial_t \vert (v_1^{\epsilon}-\overline {M}_1)^+ \vert^2 
\, dx \, dz+
 \displaystyle \int_{\Omega_{\epsilon}} d_1
\vert \nabla (v_1^{\epsilon}-\overline {M}_1)^+ \vert^2  \, dx \, dz 
\\ \nonumber
&=\epsilon \displaystyle \int_{\Gamma_{\epsilon}} c_1(x,z)
(u_1^{\epsilon}-v_1^{\epsilon})_{+} \, (v_1^{\epsilon}-\overline {M}_1)^+ 
d\sigma_{\epsilon} \\
&-\displaystyle \int_{\Omega_{\epsilon}}
\bigg[ \sum_{j=1}^{M} b_{1,j} \,
\sigma_{\tilde M} (v_1^{\epsilon}) \, \sigma_{\tilde M} (v_j^{\epsilon}) 
\bigg] \, (v_1^{\epsilon}-\overline {M}_1)^+ \, dx \, dz
\end{eqnarray}
Since the last term on the right-hand side is negative, we conclude:

\begin{eqnarray} \label{2.29} \nonumber
&\frac{\displaystyle 1}{\displaystyle 2} \partial_t \Vert
(v_1^{\epsilon}-\overline {M}_1)^+ \Vert^2_{L^2(\Omega_{\epsilon})}+
d_1 \Vert \nabla (v_1^{\epsilon}-\overline {M}_1)^+ 
\Vert^2_{L^2(\Omega_{\epsilon})} \\
&\leq \epsilon \displaystyle \int_{\Gamma_{\epsilon}} c_1(x,z)
(u_1^{\epsilon}-v_1^{\epsilon})_{+} \,
(v_1^{\epsilon}-\overline {M}_1)^+ d\sigma_{\epsilon}
\end{eqnarray}
Let us now estimate the term on the right-hand side of (\ref{2.29}):

\begin{eqnarray} \label{2.31} \nonumber
&\epsilon \displaystyle \int_{\Gamma_{\epsilon}} c_1(x,z)
(u_1^{\epsilon}-v_1^{\epsilon})_{+} \,
(v_1^{\epsilon}-\overline {M}_1)^+ d\sigma_{\epsilon} \\ \nonumber
&\leq
\epsilon \displaystyle \int_{\Gamma_{\epsilon}} c_1(x,z)
{\mathcal H}(u_1^{\epsilon}-v_1^{\epsilon}) \, (u_1^{\epsilon}-M_1) \,
(v_1^{\epsilon}-\overline {M}_1)^+ d\sigma_{\epsilon} \\ \nonumber
&-\epsilon \displaystyle \int_{\Gamma_{\epsilon}} c_1(x,z)
{\mathcal H}(u_1^{\epsilon}-v_1^{\epsilon}) \, (v_1^{\epsilon}-M_1) \,
(v_1^{\epsilon}-\overline {M}_1)^+ d\sigma_{\epsilon}\\
&\leq \epsilon \displaystyle \int_{\Gamma_{\epsilon}} c_1(x,z)
{\mathcal H}(u_1^{\epsilon}-v_1^{\epsilon}) \, (u_1^{\epsilon}-M_1)^+ \,
(v_1^{\epsilon}-\overline {M}_1)^+ d\sigma_{\epsilon}
\end{eqnarray}
if one chooses $M_1 \leq \overline {M}_1$.
By applying the H\"older and Young inequalities, along with (\ref{A.7}),
one gets:

\begin{eqnarray} \label{2.32} \nonumber
&\epsilon \displaystyle \int_{\Gamma_{\epsilon}} c_1(x,z)
(u_1^{\epsilon}-v_1^{\epsilon})_{+} \,
(v_1^{\epsilon}-\overline {M}_1)^+ d\sigma_{\epsilon} \\ \nonumber
&\leq
\frac{\displaystyle \epsilon}{\displaystyle 2} \Vert c_1(x,z) 
\Vert_{L^{\infty}(\Gamma_{\epsilon})} 
\bigg[ \Vert (u_1^{\epsilon}-M_1)^+ \Vert^2_{L^2(\Gamma_{\epsilon})}+
\Vert (v_1^{\epsilon}-\overline {M}_1)^+ \Vert^2_{L^2(\Gamma_{\epsilon})}
\bigg] \\ 
&\leq C_1 \, \epsilon \, \eta \,
\Vert \nabla (v_1^{\epsilon}-\overline {M}_1)^+ 
\Vert^2_{L^2(\Omega{\epsilon})}  
+C_2 \, \epsilon \, \eta^{-1} 
\Vert (v_1^{\epsilon}-\overline {M}_1)^+
\Vert^2_{L^2(\Omega{\epsilon})} 
\end{eqnarray}
where $\eta$ is a small constant and $u_1^{\epsilon}$ satisfies the
inequality (\ref{2.27c}).
Therefore, from Eq. (\ref{2.29}):

\begin{eqnarray} \label{2.33} \nonumber
&\frac{\displaystyle 1}{\displaystyle 2} \partial_t \Vert
(v_1^{\epsilon}-\overline {M}_1)^+ \Vert^2_{L^2(\Omega_{\epsilon})} 
+(d_1-C_1 \epsilon \eta)
\Vert \nabla (v_1^{\epsilon}-\overline {M}_1)^+
\Vert^2_{L^2(\Omega_{\epsilon})} \\
&\leq C_2 \, \epsilon \, \eta^{-1}  
\, \Vert (v_1^{\epsilon}-\overline {M}_1)^+ \Vert^2_{L^2(\Omega_{\epsilon})}
\end{eqnarray}
If we choose
$\eta < 
\frac{d_1}{\epsilon \, C_1}$, Eq. (\ref{2.33}) reduces to:

\begin{equation} \label{2.34} 
\frac{\displaystyle 1}{\displaystyle 2} \partial_t \Vert
(v_1^{\epsilon}-\overline {M}_1)^+ \Vert^2_{L^2(\Omega_{\epsilon})} 
\leq C_2 \, \epsilon \, \eta^{-1}  
\, \Vert (v_1^{\epsilon}-\overline {M}_1)^+ \Vert^2_{L^2(\Omega_{\epsilon})}
\end{equation}
and the Gronwall lemma gives:

\begin{equation} \label{2.35} 
\Vert (v_1^{\epsilon}(t, \cdot)-\overline {M}_1)^+ 
\Vert^2_{L^2(\Omega_{\epsilon})}
\leq
\Vert (v_1^{\epsilon}-\overline {M}_1)^+ (0) \Vert^2_{L^2(\Omega_{\epsilon})}
\, \exp (2 \, C_2 \, \epsilon \, \eta^{-1} \, T)
\end{equation}
for all $t \in [0,T]$.
Since $v_1^{\epsilon}(0)=0$, then:

$$\Vert (v_1^{\epsilon}-\overline {M}_1)^+ (0) \Vert^2_{L^2(\Omega_{\epsilon})}
=0$$
and from Eq. (\ref{2.35}) it follows:

\begin{equation} \label{2.35a}
v_1^{\epsilon} \leq \overline {M}_1.
\end{equation}

In the case $1 < i \leq M$ we proceed by induction and
test Eq. (\ref{2.7a}) with
$\psi_i=p (u_i^{\epsilon})^{p-1} (p \geq 2)$:

\begin{eqnarray} \label{10.1}\nonumber
&p \displaystyle \int_{\Pi_{\epsilon}} \partial_t u_i^{\epsilon} \, 
(u_i^{\epsilon})^{p-1} \, dx \, dz+
\epsilon^2 \, p \displaystyle \int_{\Pi_{\epsilon}} D_i \,
\nabla_x u_i^{\epsilon} \cdot \nabla_x (u_i^{\epsilon})^{p-1}
\, dx \, dz \\ \nonumber
&+p \displaystyle \int_{\Pi_{\epsilon}} \tilde{D}_i \, 
\partial_z u_i^{\epsilon} \cdot \partial_z (u_i^{\epsilon})^{p-1}
\, dx \, dz
+\epsilon \, p \displaystyle \int_{\Gamma_{\epsilon}} c_i(x,z) 
(u_i^{\epsilon}-v_i^{\epsilon})_{+} \, (u_i^{\epsilon})^{p-1}   
d\sigma_{\epsilon} \\ \nonumber
&=-p \displaystyle \int_{\Pi_{\epsilon}} 
\bigg[ \sum_{j=1}^{M} a_{i,j} \,
\sigma_{\tilde M} (u_i^{\epsilon}) \, \sigma_{\tilde M} (u_j^{\epsilon}) 
\bigg] \, (u_i^{\epsilon})^{p-1} \, dx \, dz\\
&+\frac{\displaystyle p}{\displaystyle 2}  \displaystyle \int_{\Pi_{\epsilon}} 
\bigg[ \sum_{j=1}^{i-1} a_{j,i-j} \,
\sigma_{\tilde M} (u_j^{\epsilon}) \, \sigma_{\tilde M} (u_{i-j}^{\epsilon})
\bigg] \,
(u_i^{\epsilon})^{p-1} \, dx \, dz
\end{eqnarray}
Since the last term on the left-hand side is positive
and the first term on the right-hand side is negative, Eq. (\ref{10.1})
reduces to:

\begin{eqnarray} \label{10.2} \nonumber
&\displaystyle \int_{\Pi_{\epsilon}} \frac{\partial}{\partial t}
(u_i^{\epsilon})^{p}\, dx \, dz+
\epsilon^2 \, p \, (p-1) \displaystyle \int_{\Pi_{\epsilon}} D_i \,
(u_i^{\epsilon})^{p-2} \, \vert \nabla_x u_i^{\epsilon} \vert^2 
\, dx \, dz \\ \nonumber
&+p \, (p-1) \displaystyle \int_{\Pi_{\epsilon}} \tilde{D}_i \,
(u_i^{\epsilon})^{p-2} \, \vert \partial_z u_i^{\epsilon} \vert^2 
\, dx \, dz\\
&\leq
\frac{\displaystyle p}{\displaystyle 2}  \displaystyle \int_{\Pi_{\epsilon}}
\bigg[ \sum_{j=1}^{i-1} a_{j,i-j} \,
\sigma_{\tilde M} (u_j^{\epsilon}) \, \sigma_{\tilde M} (u_{i-j}^{\epsilon})
\bigg] \,
(u_i^{\epsilon})^{p-1} \, dx \, dz
\end{eqnarray}
Taking into account that the last two terms on the left-hand side are
positive and integrating over $[0,t]$ with $t \in [0,T]$, we obtain:

\begin{equation} \label{10.3}
\displaystyle \int_0^t \int_{\Pi_{\epsilon}} 
\frac{\partial}{\partial s} (u_i^{\epsilon})^p \, ds \, dx \, dz
\leq
\frac{\displaystyle p}{\displaystyle 2} \displaystyle \int_0^t
\int_{\Pi_{\epsilon}} \bigg[ \sum_{j=1}^{i-1} a_{j,i-j} \,
\sigma_{\tilde M} (u_j^{\epsilon}) \, \sigma_{\tilde M} (u_{i-j}^{\epsilon})
\bigg] \, (u_i^{\epsilon})^{p-1} \, ds \, dx \, dz
\end{equation}
Exploiting the boundedness of $u_j^{\epsilon} (1 \leq j \leq i-1)$ in
$L^{\infty}(0,T; L^{\infty} (\Pi_{\epsilon}))$ and setting the initial
conditions, one gets:

\begin{equation} \label{10.4}
\displaystyle \int_{\Pi_{\epsilon}} (u_i^{\epsilon})^p \, dx \, dz
\leq \frac{\displaystyle p}{\displaystyle 2} \displaystyle \int_0^t
\int_{\Pi_{\epsilon}} \bigg[ \sum_{j=1}^{i-1} a_{j,i-j} \,
K_j \, K_{i-j} \bigg] \, (u_i^{\epsilon})^{p-1} \, ds \, dx \, dz 
\end{equation}
where $K_j (1 \leq j \leq i-1)$ are positive constants.
In order to estimate the term on the right-hand side of Eq. (\ref{10.4}),
we use the Young inequality (\ref{Bineq}) with: $a=(u_i^{\epsilon})^{p-1}$
and $b=p \, [\sum_{j=1}^{i-1} a_{j,i-j} \, K_j \, K_{i-j}]$.
We find:

\begin{eqnarray} \label{10.5} \nonumber
&p \, \displaystyle \int_0^t \int_{\Pi_{\epsilon}} \bigg[ \sum_{j=1}^{i-1} 
a_{j,i-j} \, K_j \, K_{i-j} \bigg] \, (u_i^{\epsilon})^{p-1} \, ds \, dx \, dz
\\ \nonumber
&\leq \displaystyle \int_0^t ds \, \int_{\Pi_{\epsilon}} p^p \, 
\bigg[ \sum_{j=1}^{i-1} a_{j,i-j} \, K_j \, K_{i-j} \bigg]^p \,
\eta^{1-p} \, dx dz  
+\displaystyle \int_0^t ds \, \int_{\Pi_{\epsilon}} \eta \, 
(u_i^{\epsilon})^{p} \, dx \, dz \\
&\leq
p^{p-1} \, \bigg[ \sum_{j=1}^{i-1} a_{j,i-j} \, K_j \, K_{i-j} \bigg]^p  
\, \eta^{1-p} \, \vert \Pi_{\epsilon} \vert \, T+ \eta \, 
\displaystyle \int_0^t ds \, \int_{\Pi_{\epsilon}}
(u_i^{\epsilon})^{p} \, dx \, dz
\end{eqnarray}
Choosing $\eta=p$ and using the inequality (\ref{10.5}) in (\ref{10.4}),
we obtain:

\begin{equation} \label{10.6} 
\Vert u_i^{\epsilon} \Vert^p_{L^p(\Pi_{\epsilon})} \leq 
\bigg[ \sum_{j=1}^{i-1} a_{j,i-j} \, K_j \, K_{i-j} \bigg]^p \,
\vert \Pi_{\epsilon} \vert \, T 
+p \displaystyle \int_0^t ds \, 
\Vert u_i^{\epsilon} (s)\Vert^p_{L^p(\Pi_{\epsilon})}
\end{equation}
The Gronwall lemma applied to (\ref{10.6}) leads to the estimate:

\begin{equation} \label{10.7}
\Vert u_i^{\epsilon} \Vert^p_{L^p(\Pi_{\epsilon})} \leq
\bigg[ \sum_{j=1}^{i-1} a_{j,i-j} \, K_j \, K_{i-j} \bigg]^p \,
\vert \Pi_{\epsilon} \vert  \, T \, e^{p t}  
\end{equation}
Hence,

\begin{equation} \label{10.8}
\sup_{t \in [0,T]} \lim_{p \rightarrow \infty} \bigg[
\displaystyle \int_{\Pi_{\epsilon}} (u_i^{\epsilon})^p \, dx \, dz
\bigg]^{1/p} \leq 
\sum_{j=1}^{i-1} a_{j,i-j} \, K_j \, K_{i-j} \,  e^T \leq M_i
\end{equation}
where $M_i$ is a positive constant.

We test now Eq. (\ref{2.8a}) with $\phi_i=p [(v_i^{\epsilon}(t, \cdot)-
\overline {m}_i)^+]^{p-1}$ ($p \geq 2$), where $\overline {m}_i$
is a positive constant:

\begin{eqnarray}  \nonumber
&p \, \displaystyle \int_{\Omega_{\epsilon}} \partial_t (v_i^{\epsilon}-
\overline {m}_i) \, [(v_i^{\epsilon}-\overline {m}_i)^+]^{p-1} 
\, dx \, dz \\ \nonumber
&+p \, \displaystyle \int_{\Omega_{\epsilon}} d_i 
\nabla (v_i^{\epsilon}-\overline {m}_i) \cdot 
\nabla [(v_i^{\epsilon}-\overline {m}_i)^+]^{p-1} \, dx \, dz
\\ \nonumber
&-\epsilon \, p \displaystyle \int_{\Gamma_{\epsilon}} c_i(x,z) 
(u_i^{\epsilon}-v_i^{\epsilon})_{+} \, [(v_i^{\epsilon}-
\overline {m}_i)^+]^{p-1} d\sigma_{\epsilon} \\ \nonumber  
&=-p \displaystyle \int_{\Omega_{\epsilon}} 
\bigg[ \sum_{j=1}^{M} b_{i,j} \,
\sigma_{\tilde M} (v_i^{\epsilon}) \, \sigma_{\tilde M} (v_j^{\epsilon}) 
\bigg] \, [(v_i^{\epsilon}-\overline {m}_i)^+]^{p-1} \, dx \, dz\\
&+\frac{\displaystyle p}{\displaystyle 2}  
\displaystyle \int_{\Omega_{\epsilon}} 
\bigg[ \sum_{j=1}^{i-1} b_{j,i-j} \,
\sigma_{\tilde M} (v_j^{\epsilon}) \, \sigma_{\tilde M} (v_{i-j}^{\epsilon})
\bigg] \,
[(v_i^{\epsilon}-\overline {m}_i)^+]^{p-1} \, dx \, dz
\end{eqnarray}
Decomposing the function $(v_i^{\epsilon}-\overline {m}_i)$
in its positive and negative parts and taking into account the boundedness of
$v_j^{\epsilon} \, (1 \leq j \leq i-1)$ in
$L^\infty (0,T; L^\infty(\Omega_{\epsilon}))$ due to the inductive hypothesis,
one obtains:

\begin{eqnarray} \label{10.9} \nonumber
& \displaystyle \int_{\Omega_{\epsilon}}
\partial_t [(v_i^{\epsilon}-\overline {m}_i)^+]^p \, dx \, dz+
p \, (p-1) \displaystyle \int_{\Omega_{\epsilon}} d_i \,
\vert \nabla (v_i^{\epsilon}-\overline {m}_i)^+ \vert^2 \,
[(v_i^{\epsilon}-\overline {m}_i)^+]^{p-2}  \, dx \, dz \\ \nonumber
& \leq \epsilon \, p \displaystyle \int_{\Gamma_{\epsilon}} c_i(x,z)
(u_i^{\epsilon}-v_i^{\epsilon})_{+} \, 
[(v_i^{\epsilon}-\overline {m}_i)^+]^{p-1} d\sigma_{\epsilon} \\ \nonumber
&+\frac{\displaystyle p}{\displaystyle 2}  
\displaystyle \int_{\Omega_{\epsilon}} 
\bigg[ \sum_{j=1}^{i-1} b_{j,i-j} \,
K_j \, K_{i-j} \bigg]\, [(v_i^{\epsilon}-\overline {m}_i)^+]^{p-1} 
\, dx \, dz \\
&-p \displaystyle \int_{\Omega_{\epsilon}}
\bigg[ \sum_{j=1}^{M} b_{i,j} \,
\sigma_{\tilde M} (v_i^{\epsilon}) \, \sigma_{\tilde M} (v_j^{\epsilon})
\bigg] \, [(v_i^{\epsilon}-\overline {m}_i)^+]^{p-1} \, dx \, dz
\end{eqnarray}
where $K_j (1 \leq j \leq i-1)$ are positive constants.
Since the last term on the left-hand side is positive and the one
on the right-hand side is negative, Eq. (\ref{10.9}) reduces to:

\begin{eqnarray} \label{10.10} \nonumber
& \displaystyle \int_{\Omega_{\epsilon}}
\partial_t [(v_i^{\epsilon}-\overline {m}_i)^+]^p \, dx \, dz \leq
\epsilon \, p \displaystyle \int_{\Gamma_{\epsilon}} c_i(x,z)
(u_i^{\epsilon}-v_i^{\epsilon})_{+} \,
[(v_i^{\epsilon}-\overline {m}_i)^+]^{p-1} d\sigma_{\epsilon} \\
&+\frac{\displaystyle p}{\displaystyle 2} 
\displaystyle \int_{\Omega_{\epsilon}}
\bigg[ \sum_{j=1}^{i-1} b_{j,i-j} \,
K_j \, K_{i-j} \bigg]\, [(v_i^{\epsilon}-\overline {m}_i)^+]^{p-1}
\, dx \, dz
\end{eqnarray}
Let us now estimate the first term, $I_1$, on the right-hand side of
Eq. (\ref{10.10}):

\begin{eqnarray} \label{10.11} \nonumber
&I_1 = \epsilon \, p \displaystyle \int_{\Gamma_{\epsilon}} c_i(x,z)
{\mathcal H} (u_i^{\epsilon}-v_i^{\epsilon}) \, (u_i^{\epsilon}-v_i^{\epsilon})
[(v_i^{\epsilon}-\overline {m}_i)^+]^{p-1} d\sigma_{\epsilon} \\ \nonumber
&\leq \epsilon \, p \displaystyle \int_{\Gamma_{\epsilon}} c_i(x,z)
{\mathcal H} (u_i^{\epsilon}-v_i^{\epsilon}) \, 
(u_i^{\epsilon}-\overline {m}_i) \, 
[(v_i^{\epsilon}-\overline {m}_i)^+]^{p-1} d\sigma_{\epsilon} \\ \nonumber
&-\epsilon \, p \displaystyle \int_{\Gamma_{\epsilon}} c_i(x,z)
{\mathcal H} (u_i^{\epsilon}-v_i^{\epsilon}) \,
(v_i^{\epsilon}-\overline {m}_i) \,
[(v_i^{\epsilon}-\overline {m}_i)^+]^{p-1} d\sigma_{\epsilon} \\
&\leq \epsilon \, p \displaystyle \int_{\Gamma_{\epsilon}} c_i(x,z)
(u_i^{\epsilon}-\overline {m}_i)^+ \, 
[(v_i^{\epsilon}-\overline {m}_i)^+]^{p-1} d\sigma_{\epsilon}
\end{eqnarray}
where the last step in inequality (\ref{10.11} ) has been obtained
decomposing the functions $(u_i^{\epsilon}-\overline {m}_i)$ and
$(v_i^{\epsilon}-\overline {m}_i)$ in positive and negative parts.
By applying the H\"older inequality and exploiting the generalized
interpolation-trace inequality (\ref{A.7}) along with the
the boundedness of $u_i^{\epsilon}$ in
$L^{\infty}(0,T; L^{\infty} (\Pi_{\epsilon}))$, one concludes that:

\begin{equation} \label{10.12}
0 < I_1 \leq \epsilon \, p \, \Vert c_i(x,z) 
\Vert_{L^\infty(\Gamma_{\epsilon})} \, \Vert (u_i^{\epsilon}-\overline {m}_i)^+
\Vert_{L^2(\Gamma_{\epsilon})} \, 
\Vert [(v_i^{\epsilon}-\overline {m}_i)^+]^{p-1}
\Vert_{L^2(\Gamma_{\epsilon})}=0
\end{equation}
Hence, $I_1=0$.

Therefore, Eq. (\ref{10.10}) reduces to:

\begin{equation} \label{10.13} 
 \displaystyle \int_{\Omega_{\epsilon}}
\partial_t [(v_i^{\epsilon}-\overline {m}_i)^+]^p \, dx \, dz \leq
p \, \displaystyle \int_{\Omega_{\epsilon}}
\bigg[ \sum_{j=1}^{i-1} b_{j,i-j} \,
K_j \, K_{i-j} \bigg]\, [(v_i^{\epsilon}-\overline {m}_i)^+]^{p-1}
\, dx \, dz
\end{equation}
Integrating over $[0,t]$ with $t \in [0,T]$, and estimating the term
on the right-hand side by using Young's inequality in the form (\ref{Bineq}),
with: $a=[(v_i^{\epsilon}-\overline {m}_i)^+]^{p-1}$ and
$b=p \, \bigg[ \sum_{j=1}^{i-1} b_{j,i-j} \, K_j \, K_{i-j} \bigg]$, we get

\begin{eqnarray}  \nonumber
&\displaystyle \int_0^t \int_{\Omega_{\epsilon}}
\frac{\partial}{\partial s} [(v_i^{\epsilon}-\overline {m}_i)^+]^p
\, ds \, dx \, dz \\ \nonumber
& \leq p^{p-1} \, \bigg[ \sum_{j=1}^{i-1} b_{j,i-j} \,
K_j \, K_{i-j} \bigg]^p \, \eta^{1-p} \, \vert \Omega_{\epsilon} \vert \, T+
\eta \, \displaystyle \int_0^t \int_{\Omega_{\epsilon}}
[(v_i^{\epsilon}-\overline {m}_i)^+]^p \, ds \, dx \, dz
\end{eqnarray}
Choosing $\eta=p$ and setting the initial conditions, it follows that:

\begin{eqnarray} \label{10.14} \nonumber
& \Vert (v_i^{\epsilon}-\overline {m}_i)^+ \Vert^p_{L^p(\Omega_{\epsilon})}
\leq \bigg[ \sum_{j=1}^{i-1} b_{j,i-j} \,
K_j \, K_{i-j} \bigg]^p \, \vert \Omega_{\epsilon} \vert \, T \\
&+p \, \displaystyle \int_0^t ds \, 
\Vert (v_i^{\epsilon}-\overline {m}_i)^+ \Vert^p_{L^p(\Omega_{\epsilon})}
\end{eqnarray}
Finally, the Gronwall Lemma leads to the estimate

\begin{equation} \label{10.15} 
 \Vert (v_i^{\epsilon}-\overline {m}_i)^+ \Vert^p_{L^p(\Omega_{\epsilon})}
\leq \bigg[ \sum_{j=1}^{i-1} b_{j,i-j} \,
K_j \, K_{i-j} \bigg]^p \, \vert \Omega_{\epsilon} \vert \, T \, e^{p \, t}
\end{equation}
Hence,

\begin{equation} \label{10.16}
\sup_{t \in [0,T]} \lim_{p \rightarrow \infty} \bigg[
\displaystyle \int_{\Omega_{\epsilon}}
[(v_i^{\epsilon}-\overline {m}_i)^+]^p \, dx\, dz \bigg]^{1/p} \leq
\bigg[ \sum_{j=1}^{i-1} b_{j,i-j} \,
K_j \, K_{i-j} \bigg] \, e^T 
\end{equation}
Therefore, since the positive part of the function
$(v_i^{\epsilon}-\overline {m}_i)$ is bounded,
it follows that: $v_i^{\epsilon}(t,x,z) \leq \overline{M}_i$, where
$\overline{M}_i$ is a positive constant.

\end{proof}

\begin{lemma} \label{l3.2}

For a given small $\epsilon >0$,
let $u_i^{\epsilon}(t,x,z)$ and $v_i^{\epsilon}(t,x,z)$ ($1 \leq i \leq M$)
be solutions of the system
(\ref{2.1a}) in the sense of the Definition \ref{d2.1}.
Then, the following estimates hold:

\begin{equation} \label{11.1}
\epsilon \Vert \nabla_x u_i^{\epsilon} 
\Vert_{L^{\infty} (0,T; L^2(\Pi_{\epsilon}))} \leq C_i^x
\end{equation}

\begin{equation} \label{11.2}
\Vert \partial_z u_i^{\epsilon} 
\Vert_{L^{\infty} (0,T; L^2(\Pi_{\epsilon}))} \leq C_i^z
\end{equation}

\begin{equation} \label{11.3}
\Vert \partial_t u_i^{\epsilon} 
\Vert_{L^2 (0,T; L^2(\Pi_{\epsilon}))} \leq C_i^t
\end{equation}

\begin{equation} \label{11.4}
\Vert \nabla v_i^{\epsilon} 
\Vert_{L^{\infty} (0,T; L^2(\Omega_{\epsilon}))} \leq D_i
\end{equation}

\begin{equation} \label{11.5}
\Vert \partial_t v_i^{\epsilon} 
\Vert_{L^2 (0,T; L^2(\Omega_{\epsilon}))} \leq D_i^t
\end{equation}
where $1 \leq i \leq M$ and $C_i^x, C_i^z, C_i^t, D_i, D_i^t$
are positive constants independent of $\tilde M$ and $\epsilon$.

\end{lemma}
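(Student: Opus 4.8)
The plan is to re-run the a~priori estimates on the time derivatives and the gradients already obtained in the proof of Lemma~\ref{l4.1} (Eqs.~(\ref{4.27})--(\ref{4.35})), but this time exploiting the pointwise bounds $0\le u_i^\epsilon< M_i$, $0\le v_i^\epsilon<\overline M_i$ of Lemma~\ref{l3.1}, whose constants are independent of $\tilde M$ and of $\epsilon$. The point is that, choosing $\tilde M>\max_i\{M_i,\overline M_i\}$, the truncation (\ref{2.4})--(\ref{2.6}) is inactive, so $L_i^{\tilde M}(u^\epsilon)=L_i(u^\epsilon)$ and $N_i^{\tilde M}(v^\epsilon)=N_i(v^\epsilon)$, and these quadratic reaction terms are bounded in $L^\infty([0,T]\times\Pi_\epsilon)$, resp.\ $L^\infty([0,T]\times\Omega_\epsilon)$, by constants depending only on the $M_j$, $\overline M_j$ and on the coagulation rates $a_{j,k},b_{j,k}$. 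Hence, when the computations of Lemma~\ref{l4.1} are repeated, every $\tilde M$-dependent constant (the $C^1_{\tilde M}$, $C^2_{\tilde M}$ appearing in (\ref{4.29}), (\ref{4.31})) is replaced by a constant controlled by these uniform reaction bounds together with $|\Pi_\epsilon|,|\Omega_\epsilon|\le|\Omega|$ and $\|f^\epsilon\|_{L^\infty([0,T]\times\Pi_\epsilon)}$. No induction on $i$ is needed, since Lemma~\ref{l3.1} already bounds all the $u_i^\epsilon$, $v_i^\epsilon$ simultaneously.

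Concretely, one tests (\ref{2.7a}) with $\psi_i=\partial_t u_i^\epsilon(t,\cdot)$ and (\ref{2.8a}) with $\phi_i=\partial_t v_i^\epsilon(t,\cdot)$, applies the H\"older and Young inequalities, and discards the negative coagulation contributions exactly as in (\ref{4.27})--(\ref{4.31}); adding the two inequalities, the boundary terms on $\Gamma_\epsilon$ combine into $-\tfrac{\epsilon}{2}\,\partial_t\!\int_{\Gamma_\epsilon}c_i(u_i^\epsilon-v_i^\epsilon)_+^2\,d\sigma_\epsilon$ as in (\ref{4.32})--(\ref{4.33}) --- a perfect-square time derivative, so that no trace inequality is needed at this stage. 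Integrating over $[0,t]$, dropping the (negative) boundary term at time $t$, and bounding the boundary term at time $0$ by $\tfrac{\epsilon}{2}\int_{\Gamma_\epsilon}c_i(U_i^\epsilon)^2\,d\sigma_\epsilon$ (recall $v_i^\epsilon(0)=0$ and $u_i^\epsilon(0)=U_i^\epsilon$, which vanishes for $i>1$), one arrives at
\begin{equation*}
\int_0^t\!\Big(\|\partial_s u_i^\epsilon\|^2_{L^2(\Pi_\epsilon)}+\|\partial_s v_i^\epsilon\|^2_{L^2(\Omega_\epsilon)}\Big)ds+\epsilon^2 D_i\,\|\nabla_x u_i^\epsilon(t)\|^2_{L^2(\Pi_\epsilon)}+\tilde D_i\,\|\partial_z u_i^\epsilon(t)\|^2_{L^2(\Pi_\epsilon)}+d_i\,\|\nabla v_i^\epsilon(t)\|^2_{L^2(\Omega_\epsilon)}\le K
\end{equation*}
for $t\in[0,T]$, where $K$ depends only on $T$, on the uniform reaction bounds, on $\|c_i\|_{L^\infty(\Gamma_\epsilon)}$ and $\|f^\epsilon\|_{L^\infty}$, and on the initial quantities $\epsilon^2\|\nabla_x U_i^\epsilon\|^2_{L^2(\Pi_\epsilon)}$, $\|\partial_z U_i^\epsilon\|^2_{L^2(\Pi_\epsilon)}$, $\epsilon\|U_i^\epsilon\|^2_{L^2(\Gamma_\epsilon)}$. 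Taking the supremum over $t\in[0,T]$ and extracting square roots yields precisely the five estimates (\ref{11.1})--(\ref{11.5}), with $C_i^x=\sqrt{K/D_i}$, $C_i^z=\sqrt{K/\tilde D_i}$, $C_i^t=\sqrt{K}$, $D_i=\sqrt{K/d_i}$, $D_i^t=\sqrt{K}$.

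Two points require care. The $\epsilon$-uniformity of $K$ rests on the fact that $\epsilon\,\sigma_\epsilon(\Gamma_\epsilon)=O(1)$, so that the prefactor $\epsilon$ exactly compensates the $\epsilon^{-1}$ blow-up of the boundary measure; combined with $c_i\in L^\infty(\Gamma_\epsilon)$ and the $\epsilon$-uniform $L^\infty$-bound on $U_1^\epsilon$, and with the analogous $\epsilon$-uniformity of the initial-gradient terms for well-prepared data $U_1^\epsilon(x,z)\sim U_1(x,x/\epsilon,z)$ (for $i>1$ all these contributions vanish, since then $U_i^\epsilon\equiv0$). The main technical obstacle is legitimating the test choices $\psi_i=\partial_t u_i^\epsilon$, $\phi_i=\partial_t v_i^\epsilon$, since a priori $\partial_t u_i^\epsilon(t,\cdot)$ lies only in $L^2(\Pi_\epsilon)$, not in $H^1(\Pi_\epsilon)$: this is handled either by carrying out the computation on the Galerkin approximations of the \emph{untruncated} system --- for which the nonlinear terms are already controlled by the $\tilde M$-free bounds of Lemma~\ref{l3.1} --- and passing to the limit via weak lower semicontinuity of the $L^2$-norms, or, alternatively, by a Steklov/time-difference-quotient argument in the $t$ variable, the quadratic nonlinearities being Lipschitz with constant governed by $\max_i\{M_i,\overline M_i\}$. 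In either case the resulting constants stay independent of $\tilde M$ and of $\epsilon$, which is the content of the Lemma.
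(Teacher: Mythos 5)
Your proposal is correct in substance and follows essentially the same route as the paper: the core computation is exactly the energy estimate of Lemma \ref{l4.1} obtained by testing with $\partial_t u_{i,n}^{\epsilon}$, $\partial_t v_{i,n}^{\epsilon}$, combining the interface terms into the perfect-square time derivative $-\tfrac{\epsilon}{2}\partial_t\int_{\Gamma_\epsilon}c_i(u_{i,n}^{\epsilon}-v_{i,n}^{\epsilon})_+^2\,d\sigma_\epsilon$ and integrating in time (Eqs. (\ref{4.27})--(\ref{4.35})), and the paper's proof of Lemma \ref{l3.2} consists precisely of citing (\ref{4.35}) and passing to the limit $n\to\infty$ via the convergences of Proposition \ref{conv} and the weak lower semicontinuity of the norms (Theorem \ref{tB.2}) — i.e. your first suggested justification, not the Steklov variant. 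The one inaccurate sub-claim is your assertion that one may run the Galerkin computation on the \emph{untruncated} system with the nonlinear terms "already controlled by the $\tilde M$-free bounds of Lemma \ref{l3.1}": those $L^\infty$ bounds are proved for the weak solutions of (\ref{2.1a}), not for Galerkin approximations (and the truncation is what makes the Galerkin ODE system globally solvable in the first place), so they cannot be invoked at the approximate level. The repair is the one you yourself indicate at the start: keep the truncated scheme, note that the constants in (\ref{4.29}), (\ref{4.31}), (\ref{4.35}) depend only on $\tilde M$, and then fix $\tilde M>\max_i\{M_i,\overline M_i\}$, a choice independent of $\epsilon$ by Lemma \ref{l3.1}, so the final constants are independent of $\tilde M$ and $\epsilon$ — a point the paper leaves implicit (via the remark after (\ref{2.6})) and which your write-up usefully makes explicit, as is your observation that the $\epsilon$-uniformity of the data terms in (\ref{4.34a}) uses $\epsilon\,\vert\Gamma_\epsilon\vert=O(1)$ and uniformly controlled initial gradients.
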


\begin{proof}
In the case $i=1$,
let $u_{1,n}^{\epsilon}$ and $v_{1,n}^{\epsilon}$ be the approximate solutions
defined in the proof of Lemma \ref{l4.1}.
Then the inequality (\ref{4.35}) holds with a constant
$\tilde{C} \geq 0$ independent
of $n$ and $\epsilon$.
Using the convergence results reported in Proposition \ref{conv} and the
lower-semicontinuity of the norm from Theorem \ref{tB.2} \cite{CD} (see
Appendix B), we get from Eq. (\ref{4.35}):

\begin{eqnarray} \label{11.6} \nonumber
& \Vert  \partial_t u_1^{\epsilon}
\Vert^2_{L^2(0,T; L^2(\Pi_{\epsilon}))}+
2 \,\Vert \partial_t v_1^{\epsilon}
\Vert^2_{L^2(0,T; L^2(\Omega_{\epsilon}))}+
2 \,\epsilon^2 \,  D_1\, 
\Vert \nabla_x u_1^{\epsilon} \Vert^2_{L^{\infty}(0,T; L^2(\Pi_{\epsilon}))} \\
\nonumber
&+2 \, \tilde{D}_1 \, \Vert \partial_z u_1^{\epsilon} \Vert^2_{L^{\infty}
(0,T; L^2(\Pi_{\epsilon}))}
+2 \,d_1 \,
\Vert \nabla v_1^{\epsilon} \Vert^2_{L^{\infty}(0,T; L^2(\Omega_{\epsilon}))} 
\\ \nonumber
& \leq \lim \inf_{n \rightarrow \infty}
\Vert  \partial_t u_{1,n}^{\epsilon}
\Vert^2_{L^2(0,T; L^2(\Pi_{\epsilon}))} 
+\lim \inf_{n \rightarrow \infty}
2 \,\Vert \partial_t v_{1,n}^{\epsilon}
\Vert^2_{L^2(0,T; L^2(\Omega_{\epsilon}))} \\ \nonumber
&+ \lim \inf_{n \rightarrow \infty}
2 \, \epsilon^2 \,  D_1 \, 
\Vert \nabla_x u_{1,n}^{\epsilon} \Vert^2_{L^{\infty}(0,T; L^2(\Pi_{\epsilon}))}
+\lim \inf_{n \rightarrow \infty}
2 \, \tilde{D}_1 \, \Vert \partial_z u_{1,n}^{\epsilon} 
\Vert^2_{L^{\infty}(0,T; L^2(\Pi_{\epsilon}))} \\
&+\lim \inf_{n \rightarrow \infty}
2 \,d_1 \,
\Vert \nabla v_{1,n}^{\epsilon} \Vert^2_{L^{\infty}(0,T; 
L^2(\Omega_{\epsilon}))} \leq \tilde{C}
\end{eqnarray}

For the case $1 < i \leq M$, the proof carries over verbatim, since also the
functions $u_{i,n}^{\epsilon}$ and $v_{i,n}^{\epsilon}$ ($1 < i \leq M$)
satisfy the inequality (\ref{4.35}).

\end{proof}

\section{Homogenization}\label{sec4}

The behavior of the solutions $u_i^{\epsilon}, v_i^{\epsilon}$
$(1 \leq i \leq M)$ of the set of Eqs. (\ref{2.1a})
as $\epsilon \rightarrow 0$ will now be studied.
In order to pass to the limit, it is necessary to obtain equations
and estimates in $\Omega$.

\begin{lemma} \label{l6.2}

Let us consider the sets defined in Section \ref{sec1}.

\par\noindent
1. There exists a linear continuous extension operator

\begin{equation} \label{6.5}
{\tilde P}: H^1(X) \rightarrow H^1(Y)
\end{equation}
such that

\begin{equation}
{\tilde P} u=u \quad \text{in} \; X
\end{equation}
and

\begin{equation} \label{6.6a}
\Vert {\tilde P} \, u \Vert_{L^2(Y)} \leq C \Vert 
u \Vert_{L^2(X)}
\end{equation}

\begin{equation} \label{6.6}
\Vert \nabla ({\tilde P}\, u) \Vert_{L^2(Y)} \leq C \Vert \nabla 
u \Vert_{L^2(X)}
\end{equation}
where $C$ is a positive constant.

\par\noindent
2. There exists a family of linear continuous extension operators

\begin{equation} \label{6.7}
{\tilde P}_{\epsilon}: H^1(G_{\epsilon}) \rightarrow H^1(D)
\end{equation}
such that

\begin{equation}
{\tilde P}_{\epsilon} u^{\epsilon}=u^{\epsilon} \quad \text{in} \; G_{\epsilon}
\end{equation}
and

\begin{equation} \label{6.8a}
\Vert {\tilde P}_{\epsilon} u^{\epsilon} \Vert_{L^2(D)} \leq C 
\Vert u^{\epsilon} \Vert_{L^2(G_{\epsilon})}
\end{equation}

\begin{equation} \label{6.8}
\Vert \nabla ({\tilde P}_{\epsilon} u^{\epsilon}) \Vert_{L^2(D)} \leq C 
\Vert \nabla u^{\epsilon} \Vert_{L^2(G_{\epsilon})}
\end{equation}
where the constant $C>0$ does not depend on $\epsilon$.

\end{lemma}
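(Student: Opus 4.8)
Lemma~\ref{l6.2} is the classical extension lemma of Cioranescu and Saint~Jean~Paulin for periodically perforated domains (see \cite{CP,CD}); the plan is to construct the operator on a single reference cell, where the real work lies, and then to periodize it while tracking the dependence on $\epsilon$. On the reference cell, since $R$ is smooth and $\overline X\subset\mathrm{Int}\,Y$, a standard Sobolev extension across $R$ (flatten the interface locally, reflect, and glue with a partition of unity) furnishes a bounded linear operator $\mathcal E$ on $Y$ that restitutes a function from its values on one side of $R$; it can be arranged to be the identity outside a fixed collar of $R$ contained in $\mathrm{Int}\,Y$ — so in particular to leave the datum unchanged near $\partial Y$ — and, being reflection based, to be bounded from $L^2$ into $L^2$ with the expected norm, which already gives the $L^2$ estimate \eqref{6.6a}. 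It does \emph{not} give the sharp gradient bound \eqref{6.6}, because a priori $\Vert\nabla(\mathcal E u)\Vert_{L^2(Y)}$ is controlled only by $\Vert u\Vert_{H^1}$. To repair this I would subtract the mean: with $\langle u\rangle$ the average of $u$ over its domain, set
\[
\tilde P u:=\langle u\rangle+\mathcal E\bigl(u-\langle u\rangle\bigr),
\]
so that $\tilde Pu=u$ where $u$ is given, $\nabla\tilde Pu=\nabla\mathcal E(u-\langle u\rangle)$, and the Poincar\'e--Wirtinger inequality on the (connected, smooth) set underlying $u$ yields $\Vert\nabla\tilde Pu\Vert_{L^2(Y)}\le C\Vert u-\langle u\rangle\Vert_{H^1}\le C\Vert\nabla u\Vert_{L^2}$, which is \eqref{6.6}; while $\Vert\tilde Pu\Vert_{L^2(Y)}\le|\langle u\rangle|\,|Y|^{1/2}+C\Vert u-\langle u\rangle\Vert_{L^2}\le C\Vert u\Vert_{L^2}$ reproduces \eqref{6.6a} for this same operator.

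For the second item I would periodize. For each admissible $k$, transport $\tilde P$ to the cell $\epsilon(k+Y)$ through the affine map $y\mapsto\epsilon(k+y)$, and do nothing on the cells meeting $\partial D$, which by the definition of $G_\epsilon$ carry no inclusion; this defines $\tilde P_\epsilon$. The patched function lies in $H^1(D)$: it is $H^1$ on each cell by the first step, and across an interface $\partial(\epsilon(k+Y))\cap\partial(\epsilon(k'+Y))$ the two cell-pieces agree, because the reference operator is the identity near $\partial Y$ and therefore both pieces coincide there with the trace of the original function — no jump is created. The constant is independent of $\epsilon$ by scaling: under the dilation $x\mapsto\epsilon x$ in $\mathbb R^2$ the two reference inequalities transform into the corresponding cell inequalities with the \emph{same} constant (the Jacobian factors cancel between the two sides, which for the gradient inequality uses the dilation invariance of $\int|\nabla\cdot|^2$ in dimension two); summing the rescaled cell estimates over $k$ and adding the trivial contribution of the boundary cells gives \eqref{6.8a}--\eqref{6.8} with the constant of \eqref{6.6a}--\eqref{6.6}.

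The only genuinely non-formal point is the gradient estimate on the reference cell: a bare Sobolev extension is not enough, and it is precisely the average-subtraction combined with Poincar\'e--Wirtinger that converts the right-hand side into $\Vert\nabla u\Vert_{L^2}$. Everything else is bookkeeping, the decisive preparatory normalization being that $\mathcal E$ (hence $\tilde P$) be the identity near $\partial Y$: this is what makes $\tilde P_\epsilon u^\epsilon$ globally $H^1$ after periodization, and, together with the scale invariance of the estimates, what makes the constant $\epsilon$-independent.
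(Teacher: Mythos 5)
For part 1 your construction is essentially the paper's own proof: an extension of $u$ across $R$ by reflection in a collar, a cutoff supported away from $X$, and the decisive mean-subtraction so that the Poincar\'e--Wirtinger inequality on $X$ turns the raw $H^1$ bound of the extension into the pure gradient bound \eqref{6.6} (the paper writes this as $\tilde u=(1-\Psi)(u^*-m)+m$ with $m$ the average of $u$ over $X$). So up to \eqref{6.6a}--\eqref{6.6} you and the paper coincide.

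The gap is in part 2, at the gluing step. In this lemma the datum lives on $G_\epsilon=\bigcup_k\epsilon(k+\overline X)$, i.e.\ on islands compactly contained in the \emph{interior} of their cells; $u^\epsilon$ is not defined anywhere near the cell interfaces, so the normalization ``$\mathcal E$ (hence $\tilde P$) is the identity near $\partial Y$'' is simply not available --- there is nothing there for the operator to be the identity on. (What you describe is the geometry of Lemma \ref{l6.1}, where the function is defined on $Z$, which does touch $\partial Y$; that is the classical Cioranescu--Saint Jean Paulin situation in which the periodized pieces match trivially.) With the operator you actually built, $\tilde Pu=\langle u\rangle$ in a neighbourhood of $\partial Y$, so after periodization the extension equals the island average $m_k$ near the boundary of the $k$-th cell; adjacent cells carry different averages, the traces do not match, and the patched function lies in $H^1$ of each cell but not in $H^1(D)$. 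Your sentence ``both pieces coincide there with the trace of the original function --- no jump is created'' is therefore false here, and it is exactly where the argument breaks (the cells meeting $\partial D$, on which you ``do nothing'', raise the same matching problem, since the extension must still be defined there).

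Moreover, this difficulty cannot be repaired by a cleverer cell operator: if $u^\epsilon$ is a different constant on each island, then $\nabla u^\epsilon=0$ on $G_\epsilon$, and \eqref{6.8} together with the extension property would force $\tilde P_\epsilon u^\epsilon$ to be constant on the connected set $D$, a contradiction. What your scaling argument does give is the cell-wise estimate, i.e.\ $\Vert\nabla(\tilde P_\epsilon u^\epsilon)\Vert_{L^2(D)}\le C\,(\Vert\nabla u^\epsilon\Vert_{L^2(G_\epsilon)}+\epsilon^{-1}\Vert u^\epsilon\Vert_{L^2(G_\epsilon)})$, equivalently a bound on $\epsilon\nabla(\tilde P_\epsilon u^\epsilon)$, which is in fact all that the two-scale compactness argument for $u^\epsilon_i$ (Proposition \ref{p6.4} via Theorem \ref{tC.5}) uses. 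To be fair to you, the paper's own proof of part 2 is the single sentence that the construction is ``obvious by summation over the individual cells'' and does not address the interface matching or the $\epsilon$-uniformity of \eqref{6.8} either; but since your write-up makes the gluing claim explicit, the flaw in it has to be pointed out.
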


\begin{proof}

1. First we extend $u$ into a neighbourhood $X_0$ of $R=\partial X$
with smooth boundary,
such that $\text{clos} (X) \subset X_0$ and $\text{clos} (X_0) \subset 
\text{int} (Y)$.
Since we have assumed that $R$ is sufficiently smooth, we can construct
a diffeomorphism as follows:

\begin{eqnarray} \label{diffeo}
&\Phi: R \times ]-\delta, \delta[ \rightarrow X_0 \\
&\Phi(y, \lambda)=x
\end{eqnarray}
Exploiting this coordinate transformation, the function $u$ can be extended
by reflection:

\begin{equation} \label{reflect}
u^*(x)=u^*(\Phi(y, \lambda))=
\begin{cases}
u(\Phi(y, \lambda)) & \lambda \ge 0 \\
u(\Phi(y, -\lambda)) & \lambda <0
\end{cases}
\end{equation}
Let us now consider the following smooth function

\begin{equation} \label{func}
\Psi: Y \rightarrow [0,1]
\end{equation}
such that $\text{supp} \Psi \subseteq Z$ and $\Psi=1$ in $Z \setminus X_0$.
Then, we define

\begin{equation} \label{extens}
\tilde{u}(x):=(1-\Psi)(u^*(x)-m)+m
\end{equation}
where
\begin{equation} \label{media}
m:=\frac{\displaystyle 1}{\displaystyle \vert X \vert}
\displaystyle \int_X u(y) \, dy
\end{equation}

Let us prove that $\tilde{u}(x)$ is an extension of $u(x)$, which satisfies
(\ref{6.6a}) and (\ref{6.6}).
One gets:

\begin{eqnarray} \label{ineq1} \nonumber
&\Vert \tilde{u} \Vert^2_{L^2(Z)}=\displaystyle \int_Z
\vert (1-\Psi) (u^*-m)+m \vert^2 \, dx \\  
&=\displaystyle \int_{Z \cap X_0} \vert (1-\Psi) u^*+\Psi m \vert^2 \, dx
+\displaystyle \int_{Z \setminus X_0} m^2 \, dx
\end{eqnarray}
Taking into account the following estimates

\begin{equation} \label{ineq2} 
\displaystyle \int_{Z \cap X_0} \bigg[ \frac{\displaystyle 1}{\displaystyle 
\vert X \vert} \displaystyle \int_X u(y) \, dy \bigg]^2 \, dx
\leq \displaystyle \int_{Z \cap X_0} \frac{\displaystyle 1}{\displaystyle
\vert X \vert} \bigg( \displaystyle \int_X u^2 \, dy \bigg) \, dx
\leq \frac{\displaystyle \vert Z \vert}{\displaystyle \vert X \vert}
\Vert u \Vert^2_{L^2(X)}
\end{equation}

\begin{equation} \label{ineq3} 
\displaystyle \int_{Z \setminus X_0} \bigg[ \frac{\displaystyle 1}
{\displaystyle 
\vert X \vert} \displaystyle \int_X u(y) \, dy \bigg]^2 \, dx
\leq \displaystyle \int_{Z \setminus X_0} \frac{\displaystyle 1}{\displaystyle
\vert X \vert} \bigg( \displaystyle \int_X u^2 \, dy \bigg) \, dx
\leq \frac{\displaystyle \vert Z \vert}{\displaystyle \vert X \vert}
\Vert u \Vert^2_{L^2(X)}
\end{equation}
and the Minkowski inequality, Eq. (\ref{ineq1}) can be rewritten as

\begin{eqnarray} \label{ineq4} \nonumber
&\Vert \tilde{u} \Vert^2_{L^2(Z)} \leq 
2 \displaystyle \int_{Z \cap X_0} (1-\Psi)^2 \, \vert u^* \vert^2 \, dx
+2 \displaystyle \int_{Z \cap X_0} \Psi^2 \, m^2 \, dx+
\frac{\displaystyle \vert Z \vert}{\displaystyle \vert X \vert}
\Vert u \Vert^2_{L^2(X)} \\
& \leq C(Z, X, \Psi) \Vert u \Vert^2_{L^2(X)}
\end{eqnarray}
For the derivative we obtain:

\begin{eqnarray} \label{ineq5} \nonumber
&\Vert \nabla \tilde{u} \Vert^2_{L^2(Z)}=\displaystyle \int_Z
\vert \nabla [ (1-\Psi) (u^*-m)] \vert^2 \, dx \\  
&=\displaystyle \int_Z \vert (u^*-m) \nabla (1-\Psi)+(1-\Psi) \nabla u^*
\vert^2 \, dx
\end{eqnarray}
By using the Minkowski and Poincar{\'e} inequalities, Eq. (\ref{ineq5})
becomes:

\begin{eqnarray} \label{ineq6} \nonumber
&\Vert \nabla \tilde{u} \Vert^2_{L^2(Z)} \leq
2 \displaystyle \int_{Z \cap X_0} \vert (u^*-m) \nabla (1-\Psi) \vert^2 \, dx
+2 \displaystyle \int_{Z \cap X_0} \vert (1-\Psi) \nabla u^* \vert^2 \,
dx \\
&\leq c \displaystyle \int_{Z \cap X_0} \vert (u^*-m)\vert^2 \, dx+
c^* \displaystyle \int_{Z \cap X_0} \vert \nabla u^* \vert^2 \, dx
\leq C_1(\Psi) \Vert \nabla u \Vert^2_{L^2(X)}
\end{eqnarray}

2. The construction of $\tilde{P}_{\epsilon}$ is obvious by summation
over the individual cells.

\end{proof}

\begin{lemma} \label{l6.1}

Let us consider the sets defined in Section \ref{sec1}.

\par\noindent
1. There exists a linear continuous extension operator

\begin{equation} \label{6.5b}
{P}: H^1(Z) \rightarrow H^1(Y)
\end{equation}
such that

\begin{equation}
{P} v=v \quad \text{in} \; Z
\end{equation}
and

\begin{equation} \label{6.6bb}
\Vert {P} \, v \Vert_{L^2(Y)} \leq C \Vert 
v \Vert_{L^2(Z)}
\end{equation}

\begin{equation} \label{6.6b}
\Vert \nabla ({P}\, v) \Vert_{L^2(Y)} \leq C \Vert \nabla 
v \Vert_{L^2(Z)}
\end{equation}
where $C$ is a positive constant.

\par\noindent
2. There exists a family of linear continuous extension operators

\begin{equation} \label{6.7b}
{P}_{\epsilon}: H^1(D_{\epsilon}) \rightarrow H^1(D)
\end{equation}
such that

\begin{equation}
{P}_{\epsilon} v^{\epsilon}=v^{\epsilon} \quad \text{in} \; D_{\epsilon}
\end{equation}
and

\begin{equation} \label{6.8bb}
\Vert {P}_{\epsilon} v^{\epsilon} \Vert_{L^2(D)} \leq C 
\Vert v^{\epsilon} \Vert_{L^2(D_{\epsilon})}
\end{equation}

\begin{equation} \label{6.8b}
\Vert \nabla ({P}_{\epsilon} v^{\epsilon}) \Vert_{L^2(D)} \leq C 
\Vert \nabla v^{\epsilon} \Vert_{L^2(D_{\epsilon})}
\end{equation}
where the constant $C>0$ does not depend on $\epsilon$.

\end{lemma}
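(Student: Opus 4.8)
The proof of Lemma \ref{l6.1} is the mirror image of that of Lemma \ref{l6.2}, with the roles of the inclusion $X$ and of the perforated cell $Z=Y\setminus X$ interchanged: there one extends a function defined on the inner inclusion $X$ outward into $Z$, whereas here one is given a function on $Z$ and must fill in the hole $X$. For part 1, I would fix, as in \eqref{diffeo}, a two-sided collar $X_0\supset R$ of the smooth interface $R=\partial X$, with $\text{clos}(X_0)\subset\text{int}(Y)$, and a diffeomorphism $\Phi\colon R\times\,]-\delta,\delta[\;\to X_0$ such that $\Phi(\cdot,\lambda)$ sweeps the $Z$-side of the collar for $\lambda>0$ and the $X$-side for $\lambda<0$. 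Given $v\in H^1(Z)$, I extend it across $R$ by reflection, $v^{*}(\Phi(y,\lambda)):=v(\Phi(y,-\lambda))$ for $\lambda<0$, so that $v^{*}\in H^1(X\cap X_0)$ with $\Vert v^{*}\Vert_{L^2(X\cap X_0)}\le C\Vert v\Vert_{L^2(Z)}$ and $\Vert\nabla v^{*}\Vert_{L^2(X\cap X_0)}\le C\Vert\nabla v\Vert_{L^2(Z)}$, the constant depending only on $\Phi$. Choosing a smooth $\Psi\colon Y\to[0,1]$ with $\Psi\equiv1$ on $Z$ and on a slightly smaller collar of $R$ inside $X$, with $\text{supp}\,\Psi\cap X\subset X\cap X_0$, and setting $m:=\tfrac{1}{|Z|}\int_Z v\,dy$, I define
\[
(Pv)(x):=\begin{cases} v(x) & x\in Z,\\ \Psi(x)\,v^{*}(x)+(1-\Psi(x))\,m & x\in X.\end{cases}
\]
Since $\Psi\equiv1$ near $R$, the two branches have the same trace $v|_R$ on $R$, hence $Pv\in H^1(Y)$, $Pv=v$ in $Z$, and $Pv\equiv m$ where $\Psi=0$; in particular $Pv=v$ on $\partial Y$, which will make the cell-by-cell construction in part 2 $H^1$-conforming.

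The estimates follow the pattern of \eqref{ineq1}--\eqref{ineq6}. For \eqref{6.6bb} one splits $\Vert Pv\Vert^2_{L^2(Y)}=\Vert v\Vert^2_{L^2(Z)}+\Vert Pv\Vert^2_{L^2(X)}$ and bounds, by Minkowski's inequality, $\Vert Pv\Vert_{L^2(X)}\le\Vert v^{*}\Vert_{L^2(X\cap X_0)}+|m|\,|X|^{1/2}\le C\Vert v\Vert_{L^2(Z)}$, using $|m|\le|Z|^{-1/2}\Vert v\Vert_{L^2(Z)}$. For \eqref{6.6b}, on $X$ one has $\nabla(Pv)=\nabla\Psi\,(v^{*}-m)+\Psi\,\nabla v^{*}$; the second term is controlled by $C\Vert\nabla v\Vert_{L^2(Z)}$, and the first by $\Vert\nabla\Psi\Vert_{\infty}\Vert v^{*}-m\Vert_{L^2(X\cap X_0)}\le C\Vert v-m\Vert_{L^2(Z)}\le C\Vert\nabla v\Vert_{L^2(Z)}$, the last step being the Poincar\'e--Wirtinger inequality on the connected set $Z$ with mean $m$. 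The use of the mean $m$ (rather than keeping $\Psi v^{*}$ alone) is the one genuinely delicate point: it is precisely what frees the gradient bound of any $\Vert v\Vert_{L^2}$ contribution, and this is indispensable for the $\epsilon$-uniformity of part 2.

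For part 2, $P_\epsilon$ is assembled cell by cell. For every admissible translate with $\epsilon(k+\overline X)\subset D$ one has $D_\epsilon\cap\epsilon(k+Y)=\epsilon(k+Z)$; rescaling $v^\epsilon$ on this set to the reference cell via $y\mapsto v^\epsilon(\epsilon(k+y))$, applying $P$, and rescaling back defines $P_\epsilon v^\epsilon$ on $\bigcup_k\epsilon(k+Y)$, while on the remaining part of $D$ --- a neighbourhood of $\partial D$ where $D_\epsilon$ has no holes --- one keeps $v^\epsilon$ unchanged. Since $P$ restores the original values on $Z$, the two definitions agree on every cell interface (which lies in a $Z$-part) and along the interface with the unperforated region near $\partial D$, so $P_\epsilon v^\epsilon\in H^1(D)$. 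The change of variables $x=\epsilon(k+y)$ turns \eqref{6.6bb}--\eqref{6.6b} into the corresponding inequalities between $\epsilon(k+Y)$ and $\epsilon(k+Z)$ --- the Jacobian powers of $\epsilon$ being identical on both sides of each inequality --- and summing the squared norms over all cells gives \eqref{6.8bb} and \eqref{6.8b} with the constant of $P$, which depends only on the fixed geometry of the reference cell $Y$ and hence not on $\epsilon$. The operator $\tilde P_\epsilon$ of Lemma \ref{l6.2} is constructed in the same way, as already noted there.
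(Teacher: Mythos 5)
Your proposal is correct and follows essentially the same route as the paper, whose proof of Lemma \ref{l6.1} simply invokes the argument of Lemma \ref{l6.2} (reflection across the smooth interface $R$ via a collar diffeomorphism, cut-off interpolation with the cell mean $m$ so that the gradient bound is free of lower-order terms, then cell-by-cell rescaling and summation for $P_\epsilon$), with the roles of $X$ and $Z$ interchanged exactly as you do. You merely write out explicitly the mirrored construction and the scale-invariance of the estimates that the paper leaves to the reader and to the cited reference.
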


\begin{proof}

This Lemma can be proved by applying the same arguments considered
in the proof of Lemma \ref{l6.2} and in Ref. \cite{NEUSS2} (p. $25$).

\end{proof}

\begin{remark}

Analogous extension theorems hold also for $u_i^{\epsilon}(t,x,z)$, defined
in $[0,T] \times \Pi_{\epsilon}$, and $v_i^{\epsilon}(t,x,z)$, defined
in $[0,T] \times \Omega_{\epsilon}$ ($1 \leq i \leq M$), since $z$ and $t$
can be considered as parameters, because of the geometry of the domain.

\end{remark}

From now on, we identify $v_i^{\epsilon}$ with its extension
$P_{\epsilon} v_i^{\epsilon}$ according to Lemma \ref{l6.1}
and $u_i^{\epsilon}$ with its extension
${\tilde P}_{\epsilon} u_i^{\epsilon}$ according
to Lemma \ref{l6.2}.
In order to study the limiting behavior of the set of
Eqs. (\ref{2.1a}), we use the notion of two-scale
convergence.
Some definitions and results on two-scale convergence, introduced
in Refs. \cite{Ngu}, \cite{All}, are reported in
Appendix C.

\begin{proposition} \label{p6.3}

Let $v_i^{\epsilon}$ ($1 \leq i \leq M$) be the extension of the solutions to
the system (\ref{2.1a}).
Then, up to a subsequence:

\begin{equation} \label{6.9a} 
v_i^{\epsilon} (t,x,z) \rightharpoonup v_i (t,x,z) 
\; \; \; \text{ weakly \, in} \; \; \; L^2(0,T; H^1(\Omega))
\end{equation}

\begin{equation} \label{6.9b}
\partial_t v_i^{\epsilon} (t,x,z) \rightharpoonup \partial_t v_i (t,x,z)  
\; \; \; \text{ weakly \, in} \; \; \; L^2([0,T] \times \Omega)
\end{equation}

\begin{equation} \label{6.9c}
v_i^{\epsilon} (t,x,z) \rightarrow v_i (t,x,z) \; \; \; \text{ strongly \, in}
\; \; \; C^0([0,T]; L^2(\Omega))
\end{equation}

\end{proposition}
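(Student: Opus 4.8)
The plan is to deduce Proposition~\ref{p6.3} directly from the $\epsilon$-uniform \emph{a priori} estimates of Lemma~\ref{l3.2}, combined with the extension operators of Lemma~\ref{l6.1} and a compactness argument. First I would fix $1\le i\le M$ and recall that, after identifying $v_i^{\epsilon}$ with its extension $P_{\epsilon}v_i^{\epsilon}\in H^1(D)$ (extended trivially in $z$ and $t$ as a parameter, by the Remark), the bounds \eqref{6.8bb}--\eqref{6.8b} transfer the estimates \eqref{11.4}--\eqref{11.5} to the fixed domain $\Omega$: there is a constant $C$, independent of $\epsilon$, with
\begin{equation*}
\Vert v_i^{\epsilon}\Vert_{L^\infty(0,T;H^1(\Omega))}+\Vert\partial_t v_i^{\epsilon}\Vert_{L^2(0,T;L^2(\Omega))}\le C,
\end{equation*}
where the $L^\infty$-in-time, $L^2$-in-space control of $v_i^{\epsilon}$ itself comes from the boundedness $0\le v_i^{\epsilon}<\overline M_i$ of Lemma~\ref{l3.1} (again transported by \eqref{6.8bb}). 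In particular $v_i^{\epsilon}$ is bounded in $L^2(0,T;H^1(\Omega))$ and in $H^1(0,T;L^2(\Omega))$.

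Next I would extract the weak limits. Since $L^2(0,T;H^1(\Omega))$ is a Hilbert space and $(v_i^{\epsilon})$ is bounded there, by the Banach--Alaoglu theorem there is a subsequence (not relabelled) and a function $v_i\in L^2(0,T;H^1(\Omega))$ with $v_i^{\epsilon}\rightharpoonup v_i$ weakly in $L^2(0,T;H^1(\Omega))$, which is \eqref{6.9a}. Likewise $(\partial_t v_i^{\epsilon})$ is bounded in $L^2([0,T]\times\Omega)$, so along a further subsequence $\partial_t v_i^{\epsilon}\rightharpoonup \chi$ weakly in $L^2([0,T]\times\Omega)$; a standard argument (testing against $\varphi(x,z)\psi'(t)$ with $\psi\in\mathcal D(0,T)$ and passing to the limit in $\int_0^T\!\int_\Omega v_i^{\epsilon}\varphi\psi'=-\int_0^T\!\int_\Omega \partial_t v_i^{\epsilon}\varphi\psi$) identifies $\chi=\partial_t v_i$, giving \eqref{6.9b} and $v_i\in H^1(0,T;L^2(\Omega))$.

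For the strong convergence \eqref{6.9c} I would invoke the Aubin--Lions--Simon theorem (\cite{BF}, Theorem~II.5.16, as already used in Proposition~\ref{conv} and recalled in Appendix~B) with the Gelfand-type triple $H^1(\Omega)\hookrightarrow\hookrightarrow L^2(\Omega)\hookrightarrow H^1(\Omega)^*$: the set $\{v_i^{\epsilon}\}$ is bounded in $L^\infty(0,T;H^1(\Omega))\cap H^1(0,T;L^2(\Omega))$, hence relatively compact in $C^0([0,T];L^2(\Omega))$. Passing once more to a subsequence we get $v_i^{\epsilon}\to \tilde v_i$ strongly in $C^0([0,T];L^2(\Omega))$, and by uniqueness of limits (the strong limit is also a weak limit in $L^2([0,T]\times\Omega)$) we have $\tilde v_i=v_i$, which is \eqref{6.9c}. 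Finally, a diagonal extraction over $i=1,\dots,M$ produces a single subsequence along which all three convergences hold for every $i$.

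I do not expect any serious obstacle here: the only point requiring a little care is that the estimates of Lemmas~\ref{l3.1}--\ref{l3.2} are stated on the perforated domains $\Pi_{\epsilon},\Omega_{\epsilon}$, so one must systematically apply the extension operator of Lemma~\ref{l6.1} (together with the $z$-- and $t$--as--parameter Remark) before quoting compactness on the fixed cylinder $\Omega$; and one should make sure that the constants produced by $P_{\epsilon}$ are genuinely $\epsilon$-independent, which is exactly the content of \eqref{6.8bb}--\eqref{6.8b}. Everything else is the standard weak-compactness-plus-Aubin--Lions scheme already deployed in Section~\ref{sec2}.
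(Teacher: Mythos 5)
Your argument is correct and follows exactly the route the paper intends: the paper's proof is a one-line appeal to the a priori estimates of Lemma \ref{l3.2} (after the identification of $v_i^{\epsilon}$ with $P_{\epsilon}v_i^{\epsilon}$ made just before the proposition), and your write-up simply makes explicit the standard steps — transport of the bounds by Lemma \ref{l6.1} and Lemma \ref{l3.1}, Banach--Alaoglu, identification of the weak time derivative, and Aubin--Lions--Simon for the strong convergence in $C^0([0,T];L^2(\Omega))$ — that the authors leave implicit.
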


\begin{proof}

The convergence results (\ref{6.9a})-(\ref{6.9c}) follow immediately from the
a priori estimates given in Lemma \ref{l3.2}.

\end{proof}

\begin{remark}

Since $v_i^{\epsilon} (t,x,z)$ converges weakly
to $v_i (t,x,z)$ ($1 \le i \le M$) in
$L^2(0,T; H^1(\Omega)) \cap H^1 (0,T; L^2(\Omega))$,
Theorem \ref{tC.4} in Appendix C implies the two-scale
convergence to
the same $v_i (t,x,z)$, and there exists a function ${\tilde v}_i \in
L^2([0,T] \times \Omega; H^1_{\sharp}(Y)/\mathbb{R})$ such that up to a
subsequence $\nabla_x v_i^{\epsilon}(t,x,z)$ two-scale converges to
$\nabla_x v_i (t,x,z)+\nabla_y \tilde{v}_i(t,x,y,z)$ (where $y \in Y$
is the microscopic variable).

Moreover, the interpolation-trace inequality (\ref{A.7})
in Appendix A and Theorem \ref{tC.6} in Appendix C
allow one to infer the two-scale convergence of
$v_i^{\epsilon}$ on the boundary $\Gamma_{\epsilon}$.

\end{remark}

Next we prove the convergence of $u_i^{\epsilon}$ ($1 \leq i \leq M$).

\begin{proposition} \label{p6.4}

Let $u_i^{\epsilon}(t,x,z)$ ($1 \leq i \leq M$) be the extension of the
solutions to the system (\ref{2.1a}).
Then, up to a subsequence:

\begin{eqnarray} \label{6.10} \nonumber
&u_i^{\epsilon} (t,x,z) \rightarrow {u}_i (t,x,y,z) \\ \nonumber
&\partial_t u_i^{\epsilon} (t,x,z) \rightarrow 
\partial_t {u}_i (t,x,y,z) \\ 
\nonumber
&\partial_z u_i^{\epsilon} (t,x,z) \rightarrow 
\partial_z {u}_i (t,x,y,z) \\
&\epsilon \nabla_x u_i^{\epsilon} (t,x,z) \rightarrow
\nabla_y {u}_i(t,x,y,z)
\end{eqnarray}
in the two-scale sense with
${u}_i(t,x,y,z) \in L^2([0,T] \times \Omega; H^1_{\sharp}(Y))
\cap H^1(0,T; L^2(\Omega \times Y))$ (where $y \in Y$ is the microscopic
variable).

\end{proposition}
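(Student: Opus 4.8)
The plan is to derive \eqref{6.10} from the uniform a priori bounds of Lemma \ref{l3.2} combined with the compactness theorem for two-scale convergence recalled in Appendix C, and then to identify the resulting limits. Having identified $u_i^{\epsilon}$ with its extension $\tilde P_{\epsilon}u_i^{\epsilon}$ (Lemma \ref{l6.2}), the estimates \eqref{6.8a} and \eqref{6.8} ensure that the bounds \eqref{11.1}, \eqref{11.2}, \eqref{11.3} — and, for $u_i^{\epsilon}$ itself, also the pointwise bound $0\le u_i^{\epsilon}<M_i$ from Lemma \ref{l3.1} — are transported, with $\epsilon$-independent constants, to the fixed domain $\Omega$. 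Hence the four sequences $u_i^{\epsilon}$, $\partial_t u_i^{\epsilon}$, $\partial_z u_i^{\epsilon}$ and $\epsilon\nabla_x u_i^{\epsilon}$ are all bounded in $L^2([0,T]\times\Omega)$, so that, up to a common subsequence, there exist $u_i,\chi_i,\psi_i\in L^2([0,T]\times\Omega\times Y)$ and $\xi_i\in L^2([0,T]\times\Omega\times Y)^3$ such that, in the two-scale sense, $u_i^{\epsilon}\to u_i$, $\partial_t u_i^{\epsilon}\to\chi_i$, $\partial_z u_i^{\epsilon}\to\psi_i$ and $\epsilon\nabla_x u_i^{\epsilon}\to\xi_i$.

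The key step is to prove that $\xi_i=\nabla_y u_i$ with $y\mapsto u_i(t,x,y,z)\in H^1_{\sharp}(Y)$ for a.e. $(t,x,z)$. I would take a test function $\Phi(t,x,z,y)$ which is smooth, compactly supported in $(t,x,z)\in(0,T)\times D\times(0,L)$, $Y$-periodic in $y$ and $\mathbb R^3$-valued, and use the identity $\nabla_x\big[\Phi(t,x,z,x/\epsilon)\big]=(\nabla_x\Phi)(t,x,z,x/\epsilon)+\epsilon^{-1}(\nabla_y\Phi)(t,x,z,x/\epsilon)$. Integrating by parts in $x\in D$ (no boundary term, $\Phi$ being compactly supported in $D$) and multiplying by $\epsilon$ gives
$$\epsilon\int_0^T\!\!\int_{\Omega}\nabla_x u_i^{\epsilon}\cdot\Phi\big(t,x,z,\tfrac x\epsilon\big)\,dx\,dz\,dt=-\,\epsilon\int_0^T\!\!\int_{\Omega}u_i^{\epsilon}\,(\mathrm{div}_x\Phi)\big(t,x,z,\tfrac x\epsilon\big)\,dx\,dz\,dt-\int_0^T\!\!\int_{\Omega}u_i^{\epsilon}\,(\mathrm{div}_y\Phi)\big(t,x,z,\tfrac x\epsilon\big)\,dx\,dz\,dt.$$
Letting $\epsilon\to 0$, the left-hand side tends to $\int_0^T\!\int_{\Omega}\!\int_Y\xi_i\cdot\Phi$, the first term on the right-hand side vanishes (an $\epsilon$ against the bounded $u_i^{\epsilon}$), and the last term tends to $-\int_0^T\!\int_{\Omega}\!\int_Y u_i\,\mathrm{div}_y\Phi$. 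Therefore $\xi_i=\nabla_y u_i$ in the sense of distributions in $y$, which is precisely the statement that $u_i\in L^2([0,T]\times\Omega;H^1_{\sharp}(Y))$ and that $\epsilon\nabla_x u_i^{\epsilon}$ two-scale converges to $\nabla_y u_i$.

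The remaining identifications $\chi_i=\partial_t u_i$ and $\psi_i=\partial_z u_i$ are analogous but simpler: since the periodic microstructure oscillates only in the cross-section variable $x$ — so that $t$ and $z$ act merely as parameters, as already exploited through the Remark following Lemma \ref{l6.1} — there is no fast scale in those two directions. Testing $\partial_t u_i^{\epsilon}$ against $\Phi(t,x,z,x/\epsilon)$ with $\Phi$ as above, integrating by parts in $t$ and passing to the two-scale limit yields $\int_0^T\!\int_{\Omega}\!\int_Y\chi_i\,\Phi=-\int_0^T\!\int_{\Omega}\!\int_Y u_i\,\partial_t\Phi$, whence $\chi_i=\partial_t u_i$ and, since $\chi_i\in L^2$, $u_i\in H^1(0,T;L^2(\Omega\times Y))$; the identical argument with integration by parts in $z$ gives $\psi_i=\partial_z u_i$. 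Together with the first paragraph, this proves all four convergences stated in \eqref{6.10}.

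The point I expect to require the most care is the bookkeeping in the first paragraph: one must check that the $\epsilon$-independent a priori estimates of Lemma \ref{l3.2}, valid on the perforated domain $\Pi_{\epsilon}$, genuinely pass to the whole of $\Omega$ through the extension operators of Lemma \ref{l6.2}, and that the critical factor $\epsilon$ in front of $\nabla_x u_i^{\epsilon}$ is retained at every stage. The admissibility of the oscillating test functions $\Phi(t,x,z,x/\epsilon)$ in the two-scale limits is then secured by the standard density argument reducing everything to smooth, $Y$-periodic $\Phi$.
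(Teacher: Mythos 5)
Your proposal is correct and follows essentially the same route as the paper: the paper's proof is a one-line appeal to the a priori estimates of Lemma \ref{l3.2} (transported to $\Omega$ via the extension operators) together with the two-scale compactness result of Theorem \ref{tC.5}, and your oscillating-test-function/integration-by-parts argument is precisely the standard proof of that theorem, so you are unfolding the cited result rather than taking a different path. If anything, your write-up is more complete than the paper's, since it also makes explicit the identification of the two-scale limits of $\partial_t u_i^{\epsilon}$ and $\partial_z u_i^{\epsilon}$ with $\partial_t u_i$ and $\partial_z u_i$ (and hence $u_i\in H^1(0,T;L^2(\Omega\times Y))$), a step the paper leaves implicit.
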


\begin{proof}

The convergence results (\ref{6.10}) follow immediately from
the a priori estimates given in Lemma \ref{l3.2} and
Theorem \ref{tC.5} in Appendix C.

\end{proof}

\noindent

\textbf{Proof of the main Theorem \ref{t6.5}}

\noindent


In the case $m=1$, let us rewrite Eq. (\ref{2.8a}) in the form:

\begin{equation} \label{6.18a} 
\begin{split}
&\displaystyle \int_0^T \int_{\Omega} 
\partial_t v_1^{\epsilon} \,
\chi(\frac{\displaystyle x}{\displaystyle \epsilon})\,
\phi_1 \, dt \, dx \, dz+
d_1 \, \displaystyle \int_0^T \int_{\Omega} 
\nabla_x v_1^{\epsilon} \, \chi(\frac{\displaystyle x}{\displaystyle \epsilon})
\, \nabla_x \phi_1 \, dt \, dx \, dz \\ 
&+d_1 \, \displaystyle \int_0^T \int_{\Omega}
\partial_z v_1^{\epsilon} \, 
\chi(\frac{\displaystyle x}{\displaystyle \epsilon}) \, \partial_z \phi_1
\, dt \, dx \, dz 
-\epsilon \, \displaystyle \int_0^T \int_{\Gamma_{\epsilon}}
c_1(x,z) \, (u_1^{\epsilon}-v_1^{\epsilon})_+ \, \phi_1 \,
dt \, d\sigma_{\epsilon} \\
&=-\displaystyle \int_0^T \int_{\Omega} 
\bigg[ \sum_{j=1}^M b_{1,j} \, v_1^{\epsilon} \, v_j^{\epsilon} \bigg]\, 
\chi(\frac{\displaystyle x}{\displaystyle \epsilon}) \,
\phi_1   \, dt \, dx \, dz
\end{split}
\end{equation}
where $\chi(\frac{\displaystyle x}{\displaystyle \epsilon})$ is the
characteristic function of $\Omega_{\epsilon}$.
Inserting in Eq. (\ref{6.18a}) the following test function:

$$\phi_1 := \phi_1^0 (t,x,z)+ \epsilon
{\tilde \phi}_1 \bigg(t, x, \frac{x}{\epsilon},z \bigg)$$
where $\phi_1^0 \in C^1([0,T] \times \overline {\Omega})$ and
$\tilde{\phi}_1 \in C^1 ([0,T] \times \overline {\Omega}; C_{\#}^{\infty}(Y))$,
we obtain:

\begin{equation} \label{6.18} 
\begin{split}
&\displaystyle \int_0^T \int_{\Omega} 
\partial_t v_1^{\epsilon}(t,x,z) \,
\chi(\frac{\displaystyle x}{\displaystyle \epsilon}) \,
[\phi_1^0(t,x,z)+\epsilon \, \tilde{\phi}_1 (t,x, \frac{x}{\epsilon},z) ]
\, dt \, dx \, dz \\
&+d_1 \, \displaystyle \int_0^T \int_{\Omega} 
\nabla_x v_1^{\epsilon}  \,
\chi(\frac{\displaystyle x}{\displaystyle \epsilon}) \,
\bigg[ \nabla_x \phi_1^0 +\epsilon \nabla_x \tilde{\phi}_1 (t,x, 
\frac{x}{\epsilon},z)+
\nabla_{y} \tilde{\phi}_1 (t,x,\frac{x}{\epsilon},z) \bigg] \, dt 
\, dx \, dz\\
&+d_1 \, \displaystyle \int_0^T \int_{\Omega}
\partial_z v_1^{\epsilon} (t,x,z) \, 
\chi(\frac{\displaystyle x}{\displaystyle \epsilon}) \,
[\partial_z \phi_1^0 (t,x,z) +\epsilon \partial_z \tilde{\phi}_1 (t,x,
\frac{x}{\epsilon},z)] \, dt \, dx \, dz \\
&-\epsilon \, \displaystyle \int_0^T \int_{\Gamma_{\epsilon}}
c_1(x,z) \, (u_1^{\epsilon}(t,x,z)-v_1^{\epsilon}(t,x,z))_+ \, 
[\phi_1^0+\epsilon \tilde{\phi}_1(t,x,\frac{x}{\epsilon},z)] \,
dt \, d\sigma_{\epsilon}\\
&=-\displaystyle \int_0^T \int_{\Omega} 
\bigg[ \sum_{j=1}^M b_{1,j} \, v_1^{\epsilon}(t,x,z) \, v_j^{\epsilon} (t,x,z) 
\bigg]\, \chi(\frac{\displaystyle x}{\displaystyle \epsilon}) \,
[\phi_1^0+\epsilon \tilde{\phi}_1(t,x,\frac{x}{\epsilon},z)] \, dt 
\, dx \, dz
\end{split}
\end{equation}
Passing to the two-scale limit we get

\begin{equation} \label{6.19} 
\begin{split}
&\displaystyle \int_0^T \, dt \int_{\Omega} \, dx \, dz \int_{Z}
\partial_t v_1(t,x,z) \, 
\phi_1^0(t,x,z) \, d{y} \\
&+d_1 \, \displaystyle \int_0^T \, dt \int_{\Omega} \, dx \, dz \int_{Z}
[ \nabla_x v_1+\nabla_{y} \tilde{v}_1(t,x,{y},z) ] \,
[\nabla_x \phi_1^0+\nabla_{y} \tilde{\phi}_1(t,x,{y},z) ]
\, d{y} \\
&+d_1 \, \displaystyle \int_0^T \, dt \int_{\Omega} \, dx \, dz \int_{Z}
\partial_z v_1(t,x,z) \, \partial_z \phi_1^0(t,x,z) \, d{y}\\
&-\displaystyle \int_0^T \, dt \int_{\Omega} \, dx \, dz \int_{\Gamma} 
c_1(x,z) \, 
(u_1 (t,x,{y},z)-v_1(t,x,z))_+ \, \phi_1^0(t,x,z) \, d\sigma({y}) \\
&=-\displaystyle \int_0^T \, dt \int_{\Omega} \, dx \, dz \int_{Z} 
\bigg[ \sum_{j=1}^M b_{1,j} \, v_1 (t,x,z)\, v_j (t,x,z)\bigg]\, 
\phi_1^0(t,x,z) \,  d{y}
\end{split}
\end{equation}
An integration by parts shows that Eq. (\ref{6.19}) is a variational
formulation associated with the following homogenized system:

\begin{equation} \label{6.20}
-div_{y} [d_1 (\nabla_x v_1(t,x,z)+\nabla_{y} 
{\tilde v}_1(t,x,{y},z))]=0, \; \; \;
\; \; \; t>0, \; \; (x,z) \in \Omega, \; \; {y} \in Z 
\end{equation}
\begin{equation} \label{6.21}
[\nabla_x v_1(t,x,z)+\nabla_{y} {\tilde v}_1(t,x,{y},z)] \cdot 
\nu=0, \; \; \; \; \; \; \;
\; \; \;  \; \; \; \;  
t>0, \; \; (x,z) \in \Omega, \; \; {y} \in \Gamma
\end{equation}
\begin{equation} \label{6.22}
\begin{split}
&\vert Z \vert \frac{\displaystyle \partial v_1}{\displaystyle \partial t}
(t,x,z)-
div_x \bigg[ d_1 \displaystyle \int_{Z} d{y} \, (\nabla_x v_1(t,x,z)+
\nabla_{y} \tilde {v}_1(t,x,{y},z)) \bigg] \\ 
&-d_1 \, \vert Z \vert \, \partial_z^2 v_1(t,x,z)
+\vert Z \vert \sum_{j=1}^{M} b_{1,j} \, v_1(t,x,z) \, v_j(t,x,z) \\
&=\displaystyle \int_{\Gamma} c_1(x,z) \, (u_1(t,x,{y},z)-
v_1(t,x,z))_+ \, d\sigma(y) \;\;\;
\text{ in} \, \, [0,T] \times \Omega
\end{split}
\end{equation}
\begin{equation} \label{6.23}
\bigg[ \displaystyle \int_{Z} (\nabla_x v_1(t,x,z)+\nabla_{y} 
{\tilde v}_1(t,x,{y},z))\, d{y}
\bigg] \cdot \nu=0 \; \; \;  \qquad \; \; \; \; \; \; \; \; \; \; \; \;
\; \; \; \; \;
\text{ on} \, \, [0,T] \times \Gamma_L
\end{equation}
\begin{equation} \label{6.23a}
\partial_z v_1(t,x,z)=0 \, \, \, \qquad \; \; \; \; \; \; \; \;  \; \; \; 
\; \; \; \; \; \; \; \; \; \; \; \; 
\; \; \; \; \; \; \; \; \; \; \; \; \; \; \; \; \; \; \; \; \; \; \; \;
\; \; \; \; \; \; \; \; \; \;
\text{ on} \, \, [0,T] \times \Gamma_B
\end{equation}
By continuity we have that
\begin{equation} \label{6.24}
v_1 (t=0,x,z)=0 \, \, \, \qquad \; \; \; \; \; \; \; \;  \; \; \; \; \; \; \; \;
\; \; \; \; \; \; \; \; \; \; \; \; \; \; \; \; \; \; \; \; \; \; \; \;
\; \; \; \; \; \; \; \; \; \; \; \; \; \; \; \; \; \; \; \; \; \; \;
\; \; \; \;
\text{ in} \, \, \Omega.
\end{equation}

Since we have assumed that the diffusion coefficient is constant and
we have proved that the limiting function $v_1(t,x,z)$ does not depend
on the microscopic variable $y$,
Eqs. (\ref{6.20}) and (\ref{6.21}) reduce to:

\begin{equation} \label{6.25}
\triangle_{y} {\tilde v}_1(t,x,{y},z)=0, \, \, \, \qquad 
\; \; \; \; \; \; \; \; \; \;
\; \; \; \; \; \; \; \; \; \; \; \; \; \; \; \; \; \; \;
t>0, \; \; (x,z) \in \Omega, \; \; {y} \in Z 
\end{equation}
\begin{equation} \label{6.26}
\nabla_{y} {\tilde v}_1(t,x,{y},z) \cdot \nu=
-\nabla_x v_1(t,x,z) \cdot \nu,
\, \, \, \qquad \; \; \; \; \; 
t>0, \; \; (x,z) \in \Omega, \; \; {y} \in \Gamma
\end{equation}
Then, ${\tilde v}_1(t,x,{y},z)$ satisfying
Eqs. (\ref{6.25}) and (\ref{6.26}) can be written as

\begin{equation} \label{6.27}
{\tilde v}_1(t,x,{y},z)=\sum_{i=1}^3 \, w_i ({y}) \, 
\frac{\displaystyle \partial v_1}{\displaystyle \partial x_i}(t,x,z)
\end{equation}
where $(w_i)_{1 \leq i \leq 3}$
is the family of solutions of the cell problem

\begin{eqnarray} \label{6.28}
\begin{cases}
-\text{div}_{y} [\nabla_{y} w_i+ \hat{e}_i]=0 \, \, \, 
\qquad \; \; \; \; \; 
\text{ in} \, \, Z \\
(\nabla_{y} w_i+\hat{e}_i) \cdot \nu=0 \, \, \, \qquad \; \; \; \; 
\; \; \; \;
\text{ on} \, \, \Gamma \\
y \rightarrow w_i({y}) \; \; \; \; \; Y-\text{periodic}
\end{cases}
\end{eqnarray}
with $\hat{e}_i$ being the $i$th unit vector in ${\mathbb R}^3$.

Inserting the relation (\ref{6.27}) in Eqs. (\ref{6.22}) and (\ref{6.23}),
we get

\begin{equation} \label{6.29} 
\begin{split}
&\vert Z \vert \, \frac{\displaystyle \partial v_1}
{\displaystyle \partial t}(t,x,z)
-\text{div}_x \bigg[ d_1 \, A \, 
\nabla_x v_1(t,x,z) \bigg] \\
&-d_1 \, \vert Z \vert \, \partial_z^2 v_1(t,x,z) 
+\vert Z \vert 
\sum_{j=1}^M b_{1,j} \, v_1(t,x,z) \, v_j(t,x,z)  \\
&= \displaystyle \int_{\Gamma} c_1(x,z) \, (u_1(t,x,{y},z)-
v_1(t,x,z))_+ \, 
d\sigma(y) \, \, \, 
\; \; \; \text{ in} \, \, [0,T] \times \Omega
\end{split}
\end{equation}

\begin{equation} \label{6.30}
[ A \, \nabla_x v_1 (t,x,z)] \cdot \nu=0 \; \; \; \; \; \;  
\; \; \; \; \; \; \; \;
\text{ on} \, \, 
[0,T] \times \Gamma_{L}
\end{equation}
where $A$ is a matrix with constant coefficients defined by

$$A_{ij}=\displaystyle \int_{Z} (\nabla_{y} w_i+ \hat{e}_i) \cdot
(\nabla_{y} w_j+ \hat{e}_j) \, d{y}.$$

Let us now rewrite Eq. (\ref{2.7a}) as follows:

\begin{equation} \label{6.31} 
\begin{split}
&\displaystyle \int_0^T \int_{\Omega} 
\partial_t u_1^{\epsilon} \,
{\tilde \chi}(\frac{\displaystyle x}{\displaystyle \epsilon})\,
\psi_1 \, dt \, dx \, dz+
\epsilon^2 \, D_1 \, \displaystyle \int_0^T \int_{\Omega} 
\nabla_x u_1^{\epsilon} \,
{\tilde \chi}(\frac{\displaystyle x}{\displaystyle \epsilon}) \,
\nabla_x \psi_1  \, dt \, dx \, dz \\
&+{\tilde D}_1 \, \displaystyle \int_0^T \int_{\Omega}
\partial_z u_1^{\epsilon} \,
{\tilde \chi}(\frac{\displaystyle x}{\displaystyle \epsilon}) \,
\partial_z \psi_1  \, dt \, dx \, dz
+\epsilon \, \displaystyle \int_0^T \int_{\Gamma_{\epsilon}}
c_1(x,z) \, (u_1^{\epsilon}-v_1^{\epsilon})_+ \, \psi_1 \,
dt \, d\sigma_{\epsilon} \\
&=-\displaystyle \int_0^T \int_{\Omega} 
\bigg[ \sum_{j=1}^M a_{1,j} \, u_1^{\epsilon} \, u_j^{\epsilon} \bigg]\, 
{\tilde \chi}(\frac{\displaystyle x}{\displaystyle \epsilon})\,
\psi_1 \,  \, dt \, dx \, dz
+\displaystyle \int_0^T \int_{\Omega} f^{\epsilon} \, 
{\tilde \chi}(\frac{\displaystyle x}{\displaystyle \epsilon})\,
\psi_1 \,  \, dt \, dx \, dz
\end{split}
\end{equation}
where ${\tilde \chi}(\frac{\displaystyle x}{\displaystyle \epsilon})$
is the characteristic function of $\Pi_{\epsilon}$.
If we choose as test function:

$$\psi_1:= \tilde{\psi}_1 \bigg(t, x, \frac{x}{\epsilon},z \bigg)$$
where $\tilde{\psi}_1 \in C^1 ([0,T] \times \overline {\Omega}; 
C_{\#}^{\infty}(Y))$,
Eq. (\ref{6.31}) reads:

\begin{equation} \label{6.32} 
\begin{split}
&\displaystyle \int_0^T \int_{\Omega} 
\partial_t u_1^{\epsilon} (t,x,z)\,
{\tilde \chi}(\frac{\displaystyle x}{\displaystyle \epsilon}) \,
\tilde{\psi}_1(t,x, \frac{x}{\epsilon},z) \, dt \, dx \, dz \\
&+\epsilon^2 \, D_1 \, \displaystyle \int_0^T \int_{\Omega} 
\nabla_x u_1^{\epsilon} \,
{\tilde \chi}(\frac{\displaystyle x}{\displaystyle \epsilon}) \,
\bigg[ \nabla_x \tilde{\psi}_1(t,x, \frac{x}{\epsilon},z) 
+\frac{1}{\epsilon} \nabla_{y} 
\tilde{\psi}_1(t,x, \frac{x}{\epsilon},z) \bigg] \, dt 
\, dx\, dz \\
&+{\tilde D}_1 \, \, \displaystyle \int_0^T \int_{\Omega}
\partial_z u_1^{\epsilon} (t,x,z) \, 
{\tilde \chi}(\frac{\displaystyle x}{\displaystyle \epsilon}) \,
\partial_z \tilde{\psi}_1(t,x, \frac{x}{\epsilon},z) \,
dt \, dx \, dz \\
&+\epsilon \, \displaystyle \int_0^T \int_{\Gamma_{\epsilon}}
c_1(x,z) \, (u_1^{\epsilon}(t,x,z)-v_1^{\epsilon}(t,x,z))_+ \, 
{\tilde \psi}_1(t,x, \frac{x}{\epsilon},z) \, \,
dt \, d\sigma_{\epsilon}(x,z) \\
&=-\displaystyle \int_0^T \int_{\Omega} 
\bigg[ \sum_{j=1}^M a_{1,j} \, u_1^{\epsilon}(t,x,z) \, u_j^{\epsilon}(t,x,z) 
\bigg]\, 
{\tilde \chi}(\frac{\displaystyle x}{\displaystyle \epsilon}) \,
{\tilde \psi}_1(t,x, \frac{x}{\epsilon},z) \,  dt \, dx \, dz \\
&+\displaystyle \int_0^T \int_{\Omega} f^{\epsilon}(t,x,z) \,
{\tilde \chi}(\frac{\displaystyle x}{\displaystyle \epsilon}) \,
{\tilde \psi}_1(t,x, \frac{x}{\epsilon},z) \,  dt \, dx \, dz
\end{split}
\end{equation}
Passing to the two-scale limit we obtain:

\begin{equation} \label{6.33} 
\begin{split}
&\displaystyle \int_0^T \, dt \int_{\Omega} \, dx \, dz \int_{X}
\partial_t u_1(t,x,{y},z) \, 
\tilde{\psi}_1(t,x,{y},z) \, d{y} \\ 
&+D_1 \, \displaystyle \int_0^T \, dt \int_{\Omega} \, dx \, dz \int_{X}
\nabla_{y} u_1(t,x,{y},z) \,
\nabla_{y} \tilde{\psi}_1(t,x,{y},z) \, d{y}\\ 
&+\tilde{D}_1 \, \displaystyle \int_0^T \, dt \int_{\Omega} \, dx \, dz
\int_{X} \partial_z u_1(t,x,{y},z) \, 
\partial_z \tilde{\psi}_1(t,x,{y},z) \, d{y} \\
&+\displaystyle \int_0^T \, dt \int_{\Omega} \, dx \, dz \int_{\Gamma} 
c_1(x,z) \, 
(u_1 (t,x,{y},z)-v_1(t,x,z))_+ \, 
\tilde{\psi}_1(t,x,{y},z)\, d\sigma({y}) \\
&=-\displaystyle \int_0^T \, dt \int_{\Omega} \, dx \, dz  \int_{X} 
\bigg[ \sum_{j=1}^M a_{1,j} \, u_1(t,x,{y},z) \, 
u_j(t,x,{y},z) \bigg]\, \tilde{\psi}_1(t,x,{y},z) \, 
d{y} \\
&+\displaystyle \int_0^T \, dt \int_{\Omega} \, dx \, dz  \int_{X}
f(t,x,{y},z) \, \tilde{\psi}_1(t,x,{y},z) \,
d{y}
\end{split}
\end{equation}
An integration by parts shows that Eq. (\ref{6.33}) is a variational
formulation associated with the following homogenized system:

\begin{equation} \label{6.34} 
\begin{split}
&\frac{\displaystyle \partial u_1}{\displaystyle \partial t}(t,x,{y},z)-
D_1 \, \triangle_{y} u_1 (t,x,{y},z)
-\tilde{D}_1 \, \partial_z^2 u_1 (t,x,{y},z) \\
&= -\sum_{j=1}^M a_{1,j} \, u_1(t,x,{y},z)\, u_j(t,x,{y},z)
+f(t,x,{y},z), 
 \; t>0, \; (x,z) \in \Omega, \;  {y} \in X
\end{split}
\end{equation}
\begin{eqnarray} \label{6.35} \nonumber
&D_1 \, \nabla_{y} u_1 (t,x,{y},z) \cdot \nu \\
&=-c_1(x,z) \,
(u_1 (t,x,{y},z)-v_1 (t,x,z))_+,  \;  t>0, \;
(x,z) \in \Omega, \; \; {y} \in \Gamma
\end{eqnarray}
\begin{equation} \label{6.36}
\partial_z u_1 (t,x,{y},z)=0,\, \, \, \qquad t>0, \; \;
(x,z) \in \overline{D} \times \{0,L \}, \; \; {y} \in X
\end{equation}
To conclude, by continuity, we have that

\begin{equation} \label{6.37}
u_1 (t=0,x,{y},z)=U_1(x,{y},z) \, \, \, \qquad \; \; \; \; \; 
(x,z) \in \Omega, \; \; {y} \in X
\end{equation}

The proof in the case $1< m \leq M$ is achieved by applying exactly the same
arguments considered when $m=1$.


\section*{Acknowledgements}
S. L. is supported by GNFM of INdAM, Italy.


\appendix 

\section*{Appendix A} \label{appA}
\renewcommand{\theequation}{A.\arabic{equation}}
\setcounter{equation}{0}

In the following, we generalize the interpolation-trace inequality
from Ladyzenskaja \cite{Lady}(Eq. (2.21), p. 69):

\begin{equation} \label{A.1}
\Vert u \Vert^2_{L^2(\Gamma)} \leq C \Vert \nabla u \Vert_{L^2(\Omega)} \,
\Vert u \Vert_{L^2(\Omega)}
\end{equation}
which is valid for any function $u(x) \in H^1(\Omega)$ with:

\begin{equation} \label{A.2}
\displaystyle \int_{\Omega} u(x) \, dx=0
\end{equation}
where $\Gamma$ is an $(n-1)$-dimensional boundary of an $n$-dimensional
domain $\Omega$.

Let us consider now functions $u(x) \in H^1(\Omega)$
which do not satisfy (\ref{A.2}).
In this case, one can define

\begin{equation} \label{A.3}
u_0:=\frac{1}{\vert \Omega \vert} \displaystyle \int_{\Omega} u(x) \, dx
\end{equation}
and apply (\ref{A.1}) to $(u-u_0)$.
Hence,

\begin{eqnarray} \label{A.4} \nonumber
&\displaystyle \int_{\Gamma} \vert u(x) \vert^2 \, d\sigma(x)
\leq 2 \bigg[ \Vert u-u_0 \Vert^2_{L^2(\Gamma)}+\Vert u_0 
\Vert^2_{L^2(\Gamma)} \bigg] \\ \nonumber
&\leq 2 \, C \, \Vert \nabla (u-u_0) \Vert_{L^2(\Omega)} \Vert u-u_0 
\Vert_{L^2(\Omega)}
+\frac{\displaystyle 2 \vert \Gamma \vert }
{\displaystyle \vert \Omega , \vert^2} \bigg \vert \displaystyle \int_{\Omega} 
u(x) \, dx \bigg \vert^2  \\
&\leq
 2 \, C \bigg[ \eta \Vert \nabla u \Vert^2_{L^2(\Omega)}+
\eta^{-1} \Vert u-u_0 \Vert^2_{L^2(\Omega)} \bigg]+
\frac{\displaystyle 2 \vert \Gamma \vert }
{\displaystyle \vert \Omega \vert^2} \bigg \vert \displaystyle \int_{\Omega}
u(x) \, dx \bigg \vert^2
\end{eqnarray}
where the Young inequality has been used with $\eta$ being a
small constant.
By exploiting the Minkowski and H\"older inequalities, respectively, to
estimate the terms:

\begin{equation} \label{A.5}
\Vert u-u_0 \Vert^2_{L^2(\Omega)} \leq 2 \bigg[ \Vert u 
\Vert^2_{L^2(\Omega)}+\Vert u_0 \Vert^2_{L^2(\Omega)} \bigg]
\end{equation}

\begin{equation} \label{A.6}
\bigg \vert \displaystyle \int_{\Omega}u(x) \, dx \bigg \vert^2 \leq
\vert \Omega \vert \,  \Vert u \Vert^2_{L^2(\Omega)} 
\end{equation}
Eq. (\ref{A.4}) reads

\begin{equation} \label{A.7}
\Vert u \Vert^2_{L^2(\Gamma)} \leq 2 \, C \, \eta \Vert \nabla u
\Vert^2_{L^2(\Omega)}+\bigg[ 8 \, C \, \eta^{-1}+\frac{2 \vert \Gamma \vert}
{\vert \Omega \vert} \bigg] \Vert u \Vert^2_{L^2(\Omega)}
\end{equation}

\section*{Appendix B} \label{appB}

\renewcommand{\theequation}{B.\arabic{equation}}
\setcounter{equation}{0}

\renewcommand{\thetheorem}{B.\arabic{theorem}}

\begin{theorem}(Aubin-Lions-Simon) \label{tB.1}

Let $B_0 \subset B_1 \subset B_2$ be three Banach spaces.
We assume that the embedding of $B_1$ in $B_2$ is continuous and that the
embedding of $B_0$ in $B_1$ is compact.
Let $p,r$ such that $1 \leq p, r \leq +\infty$.
For $T>0$, we define

$$E_{p,r}=\{v \in L^p(]0,T[,B_0), \partial_t v \in L^r(]0,T[,B_2)\}.$$

\par\noindent
(i) If $p < +\infty$, $E_{p,r}$ is compactly embedded in $L^p(]0,T[,B_1)$.

\par\noindent
(ii) If $p=+\infty$ and if $r>1$, $E_{p,r}$ is compactly embedded in
$C^0([0,T],B_1)$.

\end{theorem}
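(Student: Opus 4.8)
The plan is to prove both parts following Simon's approach, in three conceptual steps: an interpolation inequality, a reduction from $B_1$ to $B_2$, and a direct compactness argument in the weaker space $B_2$. First I would establish the \textbf{Ehrling--Lions interpolation inequality}: under the hypotheses (compact embedding $B_0\hookrightarrow B_1$, continuous embedding $B_1\hookrightarrow B_2$), for every $\eta>0$ there is $C_\eta>0$ with
$$\|w\|_{B_1} \le \eta\,\|w\|_{B_0} + C_\eta\,\|w\|_{B_2} \qquad \text{for all } w\in B_0.$$
I would prove this by contradiction: were it to fail for some $\eta_0$, there would exist $w_n$ with $\|w_n\|_{B_1}=1$ and $\eta_0\|w_n\|_{B_0}+n\|w_n\|_{B_2}\le 1$; then $(w_n)$ is bounded in $B_0$ with $\|w_n\|_{B_2}\to 0$, so by compactness of $B_0\hookrightarrow B_1$ a subsequence converges in $B_1$ to a limit $w$ with $\|w\|_{B_1}=1$, while continuity of $B_1\hookrightarrow B_2$ forces $w=0$, a contradiction.

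Next I would carry out the reduction. Let $\mathcal F$ be bounded in $E_{p,r}$, hence bounded in $L^p(0,T;B_0)$ by some $M$. The key observation is that \emph{relative compactness of $\mathcal F$ in $L^p(0,T;B_2)$ together with its boundedness in $L^p(0,T;B_0)$ already yields relative compactness in $L^p(0,T;B_1)$}. Indeed, for a sequence $(v_n)\subset\mathcal F$ that is Cauchy in $L^p(0,T;B_2)$, applying the Ehrling inequality to $w=v_n(t)-v_m(t)$ and integrating in $t$ gives
$$\|v_n-v_m\|_{L^p(0,T;B_1)} \le \eta\big(\|v_n\|_{L^p(0,T;B_0)}+\|v_m\|_{L^p(0,T;B_0)}\big) + C_\eta\,\|v_n-v_m\|_{L^p(0,T;B_2)} \le 2\eta M + C_\eta\,\|v_n-v_m\|_{L^p(0,T;B_2)}.$$
Choosing $\eta$ small and then $n,m$ large makes the right-hand side arbitrarily small, so $(v_n)$ is Cauchy in $L^p(0,T;B_1)$; the identical estimate with the supremum over $t$ in place of the integral handles the $C^0([0,T];B_1)$ norm needed for part (ii).

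It then remains to produce relative compactness of $\mathcal F$ in $L^p(0,T;B_2)$ (resp.\ $C^0([0,T];B_2)$), and here I would use the derivative bound together with the compact embedding $B_0\hookrightarrow B_2$ (composition of the compact $B_0\hookrightarrow B_1$ with the continuous $B_1\hookrightarrow B_2$). Writing $v(t)-v(s)=\int_s^t \partial_\tau v\,d\tau$ and applying H\"older gives
$$\|v(t)-v(s)\|_{B_2} \le \int_s^t \|\partial_\tau v\|_{B_2}\,d\tau \le |t-s|^{1-1/r}\,\|\partial_\tau v\|_{L^r(0,T;B_2)},$$
so time translations are controlled uniformly over $\mathcal F$. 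For part (i) ($p<\infty$) I would invoke the vector-valued Riesz--Fr\'echet--Kolmogorov criterion: the uniform smallness of $\|v(\cdot+h)-v(\cdot)\|_{L^p(0,T-h;B_2)}$, obtained by reconciling the $L^r$ translation estimate above with the uniform $L^p(0,T;B_0)$ bound, combined with the relative compactness in $B_2$ of the local averages $\frac{1}{h}\int_t^{t+h}v(s)\,ds$ (bounded in $B_0$, hence precompact in $B_2$), yields relative compactness in $L^p(0,T;B_2)$. For part (ii) ($p=\infty$, $r>1$) the positive exponent $1-1/r$ shows each $v$ is uniformly H\"older continuous into $B_2$, so $\mathcal F$ is equicontinuous and pointwise bounded in $B_0$, hence pointwise precompact in $B_2$; the vector-valued Arzel\`a--Ascoli theorem then gives relative compactness in $C^0([0,T];B_2)$. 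Composing with the reduction step completes both parts.

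The step I expect to be the main obstacle is the third one in part (i): formulating and verifying the vector-valued Riesz--Fr\'echet--Kolmogorov criterion and, above all, upgrading the time-translation control — which arises naturally in $L^r(0,T;B_2)$ from the derivative bound — to the full $L^p$ equicontinuity that the criterion demands when $p\neq r$. This reconciliation of the two exponents (interpolating the small $L^r$-translation estimate against the uniform bound in $L^p(0,T;B_0)\hookrightarrow L^p(0,T;B_2)$) is the delicate technical point, whereas the Ehrling reduction and the Arzel\`a--Ascoli argument of part (ii) are comparatively routine.
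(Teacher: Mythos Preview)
The paper does not prove this theorem: it is stated in Appendix~B purely as a quoted result, with a citation to \cite{BF}, Theorem~II.5.16, so there is no ``paper's own proof'' to compare against. Your outline is the standard Simon argument (Ehrling interpolation, reduction from $B_1$ to $B_2$, then Riesz--Fr\'echet--Kolmogorov or Arzel\`a--Ascoli in $B_2$) and is correct as a sketch; your identification of the $L^r$-to-$L^p$ translation reconciliation in part~(i) as the genuine technical crux is accurate, and the remaining steps are routine.
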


\begin{theorem}(Lower-semicontinuity of the norm) \label{tB.2}

Let $E$ and $F$ be Banach spaces and $F'$ be the dual space of $F$.

\par\noindent
(i) Let $\{x_n\}$ be a sequence weakly convergent to $x$ in $E$.
Then, the norm on $E$ is lower semi-continuous with respect to the weak
convergence, i.e.

\begin{equation} \label {b3}
\Vert x \Vert_E \leq \lim \inf_{n \rightarrow \infty} \Vert x_n \Vert_E
\end{equation}

\par\noindent
(ii) Let $\{x_n\}$ be a sequence weakly$^*$ convergent to $x$ in $F'$.
Then, the norm on $F'$ is lower semi-continuous with respect to the
weak$^*$ convergence, i.e.

\begin{equation} \label {b4}
\Vert x \Vert_{F'} \leq \lim \inf_{n \rightarrow \infty} \Vert x_n \Vert_{F'}
\end{equation}

\end{theorem}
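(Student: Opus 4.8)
The plan is to treat the two parts separately, since the natural tools differ: part (i) rests on the Hahn--Banach theorem, whereas part (ii) exploits directly the characterization of the dual norm as a supremum of linear functionals.

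For part (i), I would first dispose of the trivial case $x=0$, in which the inequality $\Vert x \Vert_E = 0 \leq \liminf_{n\to\infty} \Vert x_n \Vert_E$ holds automatically. Assuming $x \neq 0$, the key step is to invoke the standard corollary of the Hahn--Banach theorem producing a norming functional: there exists $f \in E'$ with $\Vert f \Vert_{E'}=1$ and $f(x)=\Vert x \Vert_E$. Since $x_n \rightharpoonup x$ weakly in $E$, by the very definition of weak convergence $f(x_n) \to f(x)$. Combining this with the elementary bound $f(x_n) \leq |f(x_n)| \leq \Vert f \Vert_{E'}\,\Vert x_n \Vert_E = \Vert x_n \Vert_E$, I would conclude
$$\Vert x \Vert_E = f(x) = \lim_{n\to\infty} f(x_n) = \liminf_{n\to\infty} f(x_n) \leq \liminf_{n\to\infty} \Vert x_n \Vert_E,$$
which is exactly the desired estimate.

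For part (ii), I would start from the definition of the dual norm,
$$\Vert x \Vert_{F'} = \sup_{\Vert y \Vert_F \leq 1} |\langle x, y\rangle|,$$
and fix an arbitrary $y \in F$ with $\Vert y \Vert_F \leq 1$. Weak* convergence of $x_n$ to $x$ in $F'$ means precisely that $\langle x_n, y\rangle \to \langle x, y\rangle$ for every $y \in F$, so that $|\langle x, y\rangle| = \lim_{n\to\infty} |\langle x_n, y\rangle|$. Using $|\langle x_n, y\rangle| \leq \Vert x_n \Vert_{F'}\,\Vert y \Vert_F \leq \Vert x_n \Vert_{F'}$, this yields $|\langle x, y\rangle| \leq \liminf_{n\to\infty} \Vert x_n \Vert_{F'}$ for each admissible $y$. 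Taking the supremum over all $y$ with $\Vert y \Vert_F \leq 1$ on the left-hand side then gives $\Vert x \Vert_{F'} \leq \liminf_{n\to\infty} \Vert x_n \Vert_{F'}$.

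I do not anticipate a genuine obstacle, as both assertions are classical; the only point requiring care is conceptual rather than technical. In part (i) one cannot bound $\Vert x \Vert_E$ directly by a limit of the norms $\Vert x_n \Vert_E$, because the norm is not weakly continuous but merely weakly lower semicontinuous; the selection of the norming functional via Hahn--Banach is precisely what circumvents this. In part (ii), the representation of the norm as a supremum of the maps $x \mapsto |\langle x, y\rangle|$, each of which is weak* continuous, is exactly the structure that transfers the pointwise weak* convergence into lower semicontinuity of the norm.
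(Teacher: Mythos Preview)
Your argument is correct and is the standard textbook proof of this classical fact. Note, however, that the paper does not actually prove Theorem~\ref{tB.2}: it is simply recalled in Appendix~B as a known result from functional analysis (with a reference to \cite{CD}), so there is no proof in the paper to compare against.
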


\section*{Appendix C} \label{appC}

\renewcommand{\theequation}{C.\arabic{equation}}
\setcounter{equation}{0}

\renewcommand{\thedefinition}{C.\arabic{definition}}
\renewcommand{\thetheorem}{C.\arabic{theorem}}

\begin{definition} \label{dC.1}
A sequence of functions $v^{\epsilon}$ in $L^2 ([0,T] \times \Omega)$
 two-scale converges to  $v_0 \in L^2 ([0,T] \times \Omega \times Y)$
if
\begin{equation} \label{c1}
\lim_{\epsilon \rightarrow 0} \int_0^T \int_{\Omega} v^{\epsilon}(t,x) \,
\phi \bigg( t,x,\frac{x}{\epsilon} \bigg) \,dt \,dx=
\int_0^T \int_{\Omega} \int_{Y} v_0 (t,x,y) \, \phi(t,x,y) \,dt \, dx\, dy
\end{equation}
for all $\phi \in C^1 ([0,T] \times \overline {\Omega}; C_{\#}^{\infty}(Y))$.
\end{definition}

\begin{theorem} \label{tC.2} (Compactness theorem)
If $v^{\epsilon}$ is a bounded sequence in $L^2 ([0,T] \times \Omega)$, then
there exists a function $v_0 (t,x,y)$ in $L^2 ([0,T] \times \Omega \times Y)$
such that, up to a subsequence,
$v^{\epsilon}$ two-scale converges to $v_0$.
\end{theorem}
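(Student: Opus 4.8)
The plan is to follow the classical Nguetseng--Allaire argument, realizing the two-scale limit as the Riesz representative of a bounded linear functional on the Hilbert space $L^2([0,T]\times\Omega\times Y)$. Throughout, the admissible test functions are those of Definition \ref{dC.1}, namely $\phi\in C^1([0,T]\times\overline{\Omega};C^\infty_\#(Y))$, and I note that $(t,x)\mapsto\phi(t,x,x/\epsilon)$ is continuous, hence measurable, so all the integrals below make sense. The starting point is the elementary Cauchy--Schwarz bound
\begin{equation*}
\left| \int_0^T \int_\Omega v^\epsilon(t,x)\, \phi\Big(t,x,\tfrac{x}{\epsilon}\Big)\, dt\, dx \right|
\le \Vert v^\epsilon \Vert_{L^2([0,T]\times\Omega)}\,
\left\Vert \phi\Big(\cdot,\cdot,\tfrac{\cdot}{\epsilon}\Big) \right\Vert_{L^2([0,T]\times\Omega)} .
\end{equation*}
The decisive ingredient is the mean-value (oscillation) lemma applied to $|\phi|^2$, which is again admissible: it gives
\begin{equation*}
\lim_{\epsilon\to 0}\left\Vert \phi\Big(\cdot,\cdot,\tfrac{\cdot}{\epsilon}\Big)\right\Vert^2_{L^2([0,T]\times\Omega)}
= \int_0^T\!\int_\Omega\!\int_Y |\phi(t,x,y)|^2\, dt\, dx\, dy
= \Vert \phi \Vert^2_{L^2([0,T]\times\Omega\times Y)} .
\end{equation*}
Combining the two displays, for $\epsilon$ small the left-hand integrals are bounded by $C\,\Vert \phi \Vert_{L^2([0,T]\times\Omega\times Y)}$, where $C=\sup_\epsilon\Vert v^\epsilon\Vert_{L^2([0,T]\times\Omega)}<\infty$ by hypothesis.

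Next I would extract the subsequence by a diagonal procedure. The space $C^1([0,T]\times\overline{\Omega};C^\infty_\#(Y))$ is separable, so it admits a countable subset $\{\phi_k\}$ that is dense in the $L^2([0,T]\times\Omega\times Y)$ norm. For each fixed $k$ the scalar sequence $\int_0^T\!\int_\Omega v^\epsilon\,\phi_k(\cdot,\cdot,\cdot/\epsilon)\,dt\,dx$ is bounded, so a diagonal extraction yields a subsequence (still denoted $v^\epsilon$) along which
\begin{equation*}
\mu(\phi):=\lim_{\epsilon\to 0}\int_0^T\!\int_\Omega v^\epsilon(t,x)\,\phi\Big(t,x,\tfrac{x}{\epsilon}\Big)\,dt\,dx
\end{equation*}
exists for every $\phi_k$. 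The uniform bound gives $|\mu(\phi_k)|\le C\,\Vert\phi_k\Vert_{L^2([0,T]\times\Omega\times Y)}$, so $\mu$ is Lipschitz on a dense set and extends uniquely to a bounded linear functional on all of $L^2([0,T]\times\Omega\times Y)$; a routine density argument, again using the uniform bound, then shows that the limit defining $\mu(\phi)$ in fact exists for every admissible $\phi$ and coincides with this extension.

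Finally I would invoke the Riesz representation theorem on $L^2([0,T]\times\Omega\times Y)$ to produce a unique $v_0\in L^2([0,T]\times\Omega\times Y)$ with
\begin{equation*}
\mu(\phi)=\int_0^T\!\int_\Omega\!\int_Y v_0(t,x,y)\,\phi(t,x,y)\,dt\,dx\,dy
\end{equation*}
for all admissible $\phi$, which is exactly the two-scale convergence of $v^\epsilon$ to $v_0$ in the sense of Definition \ref{dC.1}. The main obstacle is the oscillation lemma itself: one must justify that the average of $\phi(\cdot,\cdot,\cdot/\epsilon)$ converges to the $Y$-average of $\phi$. I would establish this by expanding the periodic profile in a Fourier series in $y$, $\phi(t,x,y)=\sum_k\hat\phi_k(t,x)\,e^{2\pi i k\cdot y}$, where the smoothness of $\phi$ in $y$ forces rapid decay of $\hat\phi_k$ and hence uniform control of the tail in $(t,x)$; the $k=0$ term reproduces the $Y$-average, while for $k\neq 0$ the integral $\int_\Omega\hat\phi_k(t,x)\,e^{2\pi i k\cdot x/\epsilon}\,dx\to 0$ by the Riemann--Lebesgue lemma (or one integration by parts, using $\hat\phi_k\in C^1$ in $x$).
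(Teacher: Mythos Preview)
Your argument is correct and follows the classical Nguetseng--Allaire proof. Note, however, that the paper does not supply its own proof of this theorem: it is stated in Appendix~C as a background result, with the references \cite{Ngu} and \cite{All} given earlier for the two-scale convergence framework. So there is nothing to compare against beyond observing that your sketch reproduces precisely the original argument in those references.
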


\begin{theorem} \label{tC.3}
Let $v^{\epsilon}$ be a sequence of functions in $L^2 ([0,T] \times \Omega)$
which two-scale converges to a limit
$v_0 \in L^2 ([0,T] \times \Omega \times Y)$. Suppose, furthermore, that

\begin{equation} \label{c2} 
\lim_{\epsilon \rightarrow 0} \int_0^T \int_{\Omega} \vert v^{\epsilon}(t,x)
\vert^2 \, dt \, dx=
\int_0^T \int_{\Omega} \int_{Y} \vert v_0 (t,x,y) \vert^2 \, dt \, dx \, dy
\end{equation}
Then, for any sequence $w^{\epsilon}$ in $L^2 ([0,T] \times \Omega)$
that two-scale converges to a limit
$w_0 \in L^2 ([0,T] \times \Omega \times Y)$, we have

\begin{equation} \label{c3}\begin{split}
\lim_{\epsilon \rightarrow 0} \int_0^T \int_{\Omega} v^{\epsilon}(t,x) \, &
w^{\epsilon}(t,x) \,
\phi \bigg( t,x,\frac{x}{\epsilon} \bigg) \,dt \,dx
\\&=
\int_0^T \int_{\Omega} \int_{Y} v_0 (t,x,y) \, w_0 (t,x,y) \, 
\phi(t,x,y) \,dt \, dx\, dy
\end{split}\end{equation}
for all $\phi \in C^1 ([0,T] \times \overline {\Omega}; C_{\#}^{\infty}(Y))$.
\end{theorem}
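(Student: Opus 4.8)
The final statement is the classical product rule for two-scale convergence due to Allaire: hypothesis \eqref{c2} says precisely that $v^{\epsilon}$ two-scale converges to $v_0$ \emph{with preservation of the $L^2$-norm} (strong two-scale convergence). The plan is to reduce the product to oscillating test functions by density, and then to absorb the error by a corrector estimate and Cauchy--Schwarz. Throughout I write $\psi_{\epsilon}(t,x):=\psi(t,x,x/\epsilon)$ for an admissible $\psi\in C^1([0,T]\times\overline{\Omega};C_{\#}^{\infty}(Y))$, and I use $|Y|=1$.

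First I would record two elementary facts. (a) By the mean-value theorem for rapidly oscillating functions applied to the continuous $Y$-periodic integrand $|\psi|^2$,
\[
\int_0^T\!\!\int_{\Omega}|\psi_{\epsilon}(t,x)|^2\,dt\,dx\ \longrightarrow\ \int_0^T\!\!\int_{\Omega}\!\int_{Y}|\psi(t,x,y)|^2\,dt\,dx\,dy .
\]
(b) Since $v^{\epsilon}$ two-scale converges to $v_0$, Definition \ref{dC.1} with test function $\phi=\psi$ gives $\int v^{\epsilon}\psi_{\epsilon}\to\int\!\int v_0\,\psi$. Expanding $\|v^{\epsilon}-\psi_{\epsilon}\|_{L^2}^2=\int|v^{\epsilon}|^2-2\int v^{\epsilon}\psi_{\epsilon}+\int|\psi_{\epsilon}|^2$ and letting $\epsilon\to0$ using \eqref{c2}, (a) and (b) yields the corrector estimate
\[
\lim_{\epsilon\to0}\|v^{\epsilon}-\psi_{\epsilon}\|_{L^2([0,T]\times\Omega)}^2
=\|v_0-\psi\|_{L^2([0,T]\times\Omega\times Y)}^2 .
\]

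Next, for the product I would split, for an arbitrary admissible $\psi$,
\[
\int_0^T\!\!\int_{\Omega}v^{\epsilon}w^{\epsilon}\,\phi_{\epsilon}
=\int_0^T\!\!\int_{\Omega}(v^{\epsilon}-\psi_{\epsilon})\,w^{\epsilon}\,\phi_{\epsilon}
+\int_0^T\!\!\int_{\Omega}(\psi\phi)_{\epsilon}\,w^{\epsilon} .
\]
In the second integral $\psi\phi\in C^1([0,T]\times\overline{\Omega};C_{\#}^{\infty}(Y))$ is again admissible, so two-scale convergence of $w^{\epsilon}$ sends it to $\int_0^T\!\int_{\Omega}\!\int_{Y}w_0\,\psi\,\phi$. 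The first integral is bounded, by Cauchy--Schwarz, by $\|v^{\epsilon}-\psi_{\epsilon}\|_{L^2}\,\|\phi\|_{L^{\infty}}\,\|w^{\epsilon}\|_{L^2}$; since $w^{\epsilon}$ is two-scale convergent and hence bounded in $L^2$ (as are all sequences produced in Section \ref{sec4}), the corrector estimate above shows its $\limsup$ is at most $C\,\|v_0-\psi\|_{L^2([0,T]\times\Omega\times Y)}$. Therefore
\[
\limsup_{\epsilon\to0}\Bigl|\int_0^T\!\!\int_{\Omega}v^{\epsilon}w^{\epsilon}\,\phi_{\epsilon}
-\int_0^T\!\!\int_{\Omega}\!\int_{Y}w_0\,\psi\,\phi\Bigr|
\le C\,\|v_0-\psi\|_{L^2([0,T]\times\Omega\times Y)} .
\]

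Finally I would let $\psi\to v_0$ in $L^2([0,T]\times\Omega\times Y)$, which is legitimate because $C^1([0,T]\times\overline{\Omega};C_{\#}^{\infty}(Y))$ is dense there; simultaneously $\int\!\int\!\int w_0\,\psi\,\phi\to\int\!\int\!\int w_0\,v_0\,\phi$ since $w_0\phi\in L^2$. A standard $\epsilon/3$ argument then yields \eqref{c3}. The only genuinely delicate point is the corrector estimate: it is exactly where hypothesis \eqref{c2} is consumed, upgrading the mere weak two-scale convergence of $v^{\epsilon}$ to the strong control $\|v^{\epsilon}-\psi_{\epsilon}\|_{L^2}\to\|v_0-\psi\|$ that lets the error term be absorbed; the remainder is density plus Cauchy--Schwarz.
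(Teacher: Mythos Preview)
The paper does not actually prove this theorem: it is stated in Appendix~C as a background result quoted from Nguetseng and Allaire, with no argument given. Your proof is correct and is exactly Allaire's original argument (Theorem~1.8 in his 1992 paper): derive the corrector identity $\lim_{\epsilon\to0}\|v^{\epsilon}-\psi_{\epsilon}\|_{L^2}^2=\|v_0-\psi\|_{L^2}^2$ from \eqref{c2}, split the product, pass to the limit in the smooth part by two-scale convergence of $w^{\epsilon}$, bound the remainder by Cauchy--Schwarz, and conclude by density. One small point worth tightening: you invoke boundedness of $\|w^{\epsilon}\|_{L^2}$ ``since $w^{\epsilon}$ is two-scale convergent and hence bounded''. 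Definition~\ref{dC.1} alone does not force boundedness; in Allaire's formulation two-scale convergence is only considered for bounded sequences, and that is the implicit convention here (your parenthetical about Section~\ref{sec4} is the honest justification in the present paper).
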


In the following, we identify $H^1(\Omega)=W^{1,2} (\Omega)$, where the
Sobolev space $W^{1,p} (\Omega)$ is defined by
$$W^{1,p} (\Omega)=\bigg \{ v \vert v \in L^p(\Omega), 
\frac{\partial v}{\partial x_i} \in L^p(\Omega), i=1, 2, 3 \bigg \}$$
and we denote by $H^1_{\#}(Y)$ the closure of $C^{\infty}_{\#}(Y)$
for the $H^1$-norm.

\begin{theorem} \label{tC.4}
Let $v^{\epsilon}$ be a bounded sequence in
$L^2 ([0,T]; H^1 (\Omega))$ that converges weakly to a limit
$v(t,x)$ in $L^2 ([0,T]; H^1 (\Omega))$.
Then, $v^{\epsilon}$ two-scale converges to
$v(t,x)$, and there exists a function $v_1 (t,x,y)$ in
$L^2 ([0,T] \times \Omega; H^1_{\#} (Y)/\mathbb {R})$ such that, up to a
subsequence, $\nabla v^{\epsilon}$ two-scale converges to
$\nabla_x v(t,x)+\nabla_y v_1 (t,x,y)$.
\end{theorem}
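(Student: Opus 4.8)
The plan is to follow the classical Nguetseng--Allaire argument for the two-scale limit of gradients, treating the time variable $t$ and the slow variable $x$ jointly as parameters for the two-scale machinery, since no $t$-derivative enters. Write $N=\dim\Omega$ (equal to $\dim Y$). First I would invoke the compactness Theorem \ref{tC.2}: as $v^\epsilon$ is bounded in $L^2([0,T];H^1(\Omega))$, both $v^\epsilon$ and each component of $\nabla v^\epsilon$ are bounded in $L^2([0,T]\times\Omega)$, so after extraction there are $v_0\in L^2([0,T]\times\Omega\times Y)$ and $\xi_0\in L^2([0,T]\times\Omega\times Y)^N$ with $v^\epsilon$ two-scale converging to $v_0$ and $\nabla v^\epsilon$ two-scale converging to $\xi_0$. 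The goal is then to identify $v_0=v$ and $\xi_0=\nabla_x v+\nabla_y v_1$.

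Next I would show $v_0$ is independent of $y$. For a test field $\Psi\in C^1([0,T]\times\overline\Omega;C^\infty_\#(Y))^N$ with compact support in the $x$-variable, integrating by parts the identity $\mathrm{div}[\Psi(t,x,x/\epsilon)]=(\mathrm{div}_x\Psi)+\tfrac1\epsilon(\mathrm{div}_y\Psi)$ and multiplying by $\epsilon$ gives
\begin{equation*}
\epsilon\int_0^T\!\!\int_\Omega \nabla v^\epsilon\cdot\Psi\Big(t,x,\tfrac{x}{\epsilon}\Big)\,dt\,dx=-\epsilon\int_0^T\!\!\int_\Omega v^\epsilon\,(\mathrm{div}_x\Psi)\Big(t,x,\tfrac{x}{\epsilon}\Big)\,dt\,dx-\int_0^T\!\!\int_\Omega v^\epsilon\,(\mathrm{div}_y\Psi)\Big(t,x,\tfrac{x}{\epsilon}\Big)\,dt\,dx.
\end{equation*}
The boundedness of $\nabla v^\epsilon$ and $v^\epsilon$ in $L^2$ forces the left-hand side and the first right-hand term to zero as $\epsilon\to0$, while the last term two-scale converges (Definition \ref{dC.1}, since $\mathrm{div}_y\Psi$ is an admissible test function). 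Hence $\int_0^T\!\int_\Omega\int_Y v_0\,\mathrm{div}_y\Psi\,dt\,dx\,dy=0$ for all such $\Psi$, i.e.\ $\nabla_y v_0=0$, so $v_0=v_0(t,x)$. Testing the two-scale convergence against $\phi=\phi(t,x)$ independent of $y$ then recovers the weak $L^2$-limit, giving $v_0=v$.

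To identify $\xi_0$, I would restrict to test fields $\Psi$ compactly supported in $x$ and divergence-free in $y$, $\mathrm{div}_y\Psi=0$. For these the same integration by parts yields $\int_0^T\!\int_\Omega\nabla v^\epsilon\cdot\Psi(t,x,\tfrac{x}{\epsilon})\,dt\,dx=-\int_0^T\!\int_\Omega v^\epsilon\,(\mathrm{div}_x\Psi)(t,x,\tfrac{x}{\epsilon})\,dt\,dx$. Passing to the two-scale limit, and using $v=v(t,x)$ together with one further integration by parts in $x$, leads to
\begin{equation*}
\int_0^T\!\!\int_\Omega\int_Y \xi_0\cdot\Psi\,dt\,dx\,dy=\int_0^T\!\!\int_\Omega\int_Y \nabla_x v\cdot\Psi\,dt\,dx\,dy
\end{equation*}
for every $Y$-periodic, $y$-divergence-free $\Psi$. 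Thus $\xi_0-\nabla_x v$ is $L^2$-orthogonal in $y$ to all such fields; taking $\Psi$ constant in $y$ shows in particular that its $y$-average vanishes.

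The decisive step, and the one I expect to be the main obstacle, is the functional-analytic characterization of this orthogonality: on the torus $Y$ the $L^2$-orthogonal complement of the $Y$-periodic divergence-free fields is exactly $\{\nabla_y\varphi:\varphi\in H^1_\#(Y)\}$ (the periodic Helmholtz/de Rham decomposition). Applying it for almost every $(t,x)$ produces $v_1(t,x,\cdot)\in H^1_\#(Y)/\mathbb{R}$ with $\xi_0-\nabla_x v=\nabla_y v_1$, whence $\nabla v^\epsilon$ two-scale converges to $\nabla_x v+\nabla_y v_1$. It then remains to verify that this pointwise construction yields a genuine element $v_1\in L^2([0,T]\times\Omega;H^1_\#(Y)/\mathbb{R})$: the Poincar\'e--Wirtinger inequality on $Y$ bounds $\Vert v_1(t,x,\cdot)\Vert_{H^1_\#(Y)/\mathbb{R}}$ by $\Vert \xi_0(t,x,\cdot)-\nabla_x v(t,x)\Vert_{L^2(Y)}$, which is square-integrable over $[0,T]\times\Omega$, and the linear, continuous dependence of the decomposition on its data secures the joint measurability in $(t,x,y)$.
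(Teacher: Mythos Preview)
The paper does not actually prove Theorem~\ref{tC.4}: it is stated without proof in Appendix~C as one of several standard two-scale convergence results recalled from the literature (Nguetseng~\cite{Ngu}, Allaire~\cite{All}). Your proposal reproduces precisely the classical Nguetseng--Allaire argument found in those references, and the reasoning is correct throughout, so there is nothing to compare against and nothing to fix.
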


\begin{theorem} \label{tC.5}
Let $v^{\epsilon}$ and $\epsilon \nabla v^{\epsilon}$ be two bounded
sequences in $L^2 ([0,T] \times \Omega)$.
Then, there exists a function $v_1 (t,x,y)$ in
$L^2 ([0,T] \times \Omega; H^1_{\#} (Y)/\mathbb {R})$ such that, up to a
subsequence, $v^{\epsilon}$ and $\epsilon \nabla v^{\epsilon}$
two-scale converge to $v_1 (t,x,y)$ and $\nabla_y v_1 (t,x,y)$,
respectively.
\end{theorem}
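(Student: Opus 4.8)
The plan is to reproduce the classical structure theorem of Nguetseng and Allaire. Since $v^{\epsilon}$ and $\epsilon\nabla v^{\epsilon}$ are bounded in $L^2([0,T]\times\Omega)$, the compactness theorem (Theorem \ref{tC.2}) gives, after extracting a single common subsequence, a function $v_0\in L^2([0,T]\times\Omega\times Y)$ and a vector field $\xi_0\in L^2([0,T]\times\Omega\times Y)$ such that $v^{\epsilon}$ two-scale converges to $v_0$ and $\epsilon\nabla v^{\epsilon}$ two-scale converges to $\xi_0$. It then remains to show that $v_0(t,x,\cdot)$ lies in $H^1_{\#}(Y)$ for a.e.\ $(t,x)$ and that $\xi_0=\nabla_y v_0$; one may then take $v_1=v_0$.

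The key step is an integration by parts. For a test field $\phi\in C^1([0,T]\times\overline{\Omega};C_{\#}^{\infty}(Y))$ with compact support in the $x$-variable (so that the boundary term on $\partial\Omega$ drops), the chain rule $\epsilon\,\mathrm{div}_x[\phi(t,x,x/\epsilon)]=\epsilon\,(\mathrm{div}_x\phi)(t,x,x/\epsilon)+(\mathrm{div}_y\phi)(t,x,x/\epsilon)$ gives
$$\int_0^T\!\!\int_{\Omega}\epsilon\,\nabla v^{\epsilon}(t,x)\cdot\phi\Big(t,x,\frac{x}{\epsilon}\Big)\,dt\,dx=-\int_0^T\!\!\int_{\Omega}v^{\epsilon}(t,x)\Big[\epsilon\,(\mathrm{div}_x\phi)\Big(t,x,\frac{x}{\epsilon}\Big)+(\mathrm{div}_y\phi)\Big(t,x,\frac{x}{\epsilon}\Big)\Big]\,dt\,dx.$$
Letting $\epsilon\to0$, the term carrying the explicit factor $\epsilon$ vanishes because $v^{\epsilon}$ is bounded in $L^2$, and invoking the two-scale convergences together with Definition \ref{dC.1} (note $\mathrm{div}_y\phi$ is again an admissible test function) we obtain
$$\int_0^T\!\!\int_{\Omega}\!\!\int_Y\xi_0(t,x,y)\cdot\phi(t,x,y)\,dt\,dx\,dy=-\int_0^T\!\!\int_{\Omega}\!\!\int_Y v_0(t,x,y)\,(\mathrm{div}_y\phi)(t,x,y)\,dt\,dx\,dy$$
for every such $\phi$.

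First restrict to test fields with $\mathrm{div}_y\phi\equiv0$: the right-hand side vanishes, so $\xi_0(t,x,\cdot)$ is, for a.e.\ $(t,x)$, orthogonal in $L^2_{\#}(Y)$ to the subspace of $Y$-periodic divergence-free vector fields. By the de Rham-type orthogonal decomposition of $L^2_{\#}(Y)$, whose orthogonal complement of that subspace is exactly $\{\nabla_y q:q\in H^1_{\#}(Y)\}$, there is $v_1(t,x,\cdot)\in H^1_{\#}(Y)/\mathbb{R}$ with $\xi_0=\nabla_y v_1$; measurability of $(t,x)\mapsto v_1(t,x,\cdot)$ and the bound $v_1\in L^2([0,T]\times\Omega;H^1_{\#}(Y)/\mathbb{R})$ follow by realizing $v_1(t,x,\cdot)$ as the Lax--Milgram solution of the periodic cell problem $\int_Y\nabla_y v_1\cdot\nabla_y q=\int_Y\xi_0\cdot\nabla_y q$ for all $q\in H^1_{\#}(Y)$, and using linearity and continuity of the solution operator with the $L^2$ bound on $\xi_0$. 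Substituting $\xi_0=\nabla_y v_1$ back into the displayed identity and integrating by parts in $y$ (no boundary terms, by periodicity), we find that $v_0-v_1$ pairs to zero against $\mathrm{div}_y\phi$ for every admissible $\phi$; choosing $\phi=\nabla_y q$ with $q$ periodic shows $v_0-v_1$ is $Y$-periodic and harmonic in $y$, hence $y$-independent. Consequently $v_0(t,x,\cdot)\in H^1_{\#}(Y)$ with $\nabla_y v_0=\xi_0$, and taking $v_1=v_0$ establishes both asserted two-scale limits.

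The only substantive obstacle is the de Rham-type decomposition above together with its measurable-in-$(t,x)$ version. I would prove the decomposition by Fourier series on the torus $Y$: for each nonzero wave vector $k$, a divergence-free field has Fourier coefficient orthogonal to $k$, so the orthogonal complement of the divergence-free subspace is spanned by coefficients parallel to the $k$'s, i.e.\ by Fourier expansions of gradients; the measurable selection in $(t,x)$ then comes for free from the boundedness and linearity of the Lax--Milgram solution map. Everything else is routine manipulation with the definition of two-scale convergence.
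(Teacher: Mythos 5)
Your argument is correct, but note that the paper does not prove Theorem \ref{tC.5} at all: Appendix C merely recalls it from the two-scale convergence literature (Nguetseng \cite{Ngu}, Allaire \cite{All}), so the comparison is with the classical proof rather than with anything in the text. Your proof is essentially that classical argument -- compactness for both sequences, then the integration-by-parts identity
$\int\!\!\int\!\!\int \xi_0\cdot\phi\,=\,-\int\!\!\int\!\!\int v_0\,\mathrm{div}_y\phi$
-- except that you then take an unnecessary detour: that identity already states that the distributional $y$-gradient of $v_0(t,x,\cdot)$, as a periodic distribution, equals the $L^2$ field $\xi_0(t,x,\cdot)$ for a.e.\ $(t,x)$, which immediately gives $v_0(t,x,\cdot)\in H^1_{\#}(Y)$ and $\xi_0=\nabla_y v_0$; no restriction to divergence-free test fields, no Helmholtz/de Rham decomposition of $L^2_{\#}(Y)^3$, and no Lax--Milgram selection are needed. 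That orthogonality-plus-decomposition machinery is what the classical proof uses for the companion result, Theorem \ref{tC.4} (bounded $\nabla v^\epsilon$, where the corrector $v_1$ genuinely lives in $H^1_{\#}(Y)/\mathbb{R}$), not for Theorem \ref{tC.5}. Your detour is nonetheless sound: the decomposition you invoke is valid on the torus (and your Fourier argument for it is the standard one), the measurability via the cell-problem solution operator works, and the final step showing $v_0-v_1$ is $y$-independent (harmonic and periodic, hence constant) is correct, so you do land on the stated conclusion -- indeed on the slightly stronger statement that the two-scale limit itself lies in $L^2([0,T]\times\Omega;H^1_{\#}(Y))$, the quotient by $\mathbb{R}$ in the paper's formulation being inherited from Theorem \ref{tC.4} rather than genuinely needed here. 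The only points worth making explicit are the routine ones: the a.e.-in-$(t,x)$ orthogonality requires testing with products $\varphi(t,x)\Phi(y)$ over a countable dense family of divergence-free $\Phi$, and the integration by parts in $x$ presupposes (as the statement implicitly does) that $\nabla v^\epsilon$ exists in $L^2$ and uses test fields supported compactly inside $\Omega$.
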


\begin{theorem} \label{tC.6}
Let $v^{\epsilon}$ be a sequence in $L^2 ([0,T] \times \Gamma_{\epsilon})$
such that

\begin{equation} \label{c4}
\epsilon \, \int_0^T \int_{\Gamma_{\epsilon}} \vert v^{\epsilon}(t,x) 
\vert^2 \, dt \, d\sigma_{\epsilon}(x) \leq C
\end{equation}
where $C$ is a positive constant, independent of $\epsilon$.
There exist a subsequence (still denoted by $\epsilon$) and a two-scale limit
$v_0(t,x,y) \in L^2([0,T] \times \Omega; L^2(\Gamma))$ such that
$v^{\epsilon}(t,x)$ two-scale converges to $v_0(t,x,y)$ in the sense that

\begin{equation} \label{c5} 
 \lim_{\epsilon \rightarrow 0} \, \epsilon \, \displaystyle \int_0^T 
\int_{\Gamma_{\epsilon}} v^{\epsilon}(t,x) \, \phi \bigg(t,x,
\frac{x}{\epsilon} \bigg) \, dt \,
d\sigma_{\epsilon}(x)  = 
 \displaystyle \int_0^T \int_{\Omega} \int_{\Gamma} v_0(t,x,y) \,
\phi(t,x,y) \, dt \, dx \, d\sigma(y) 
\end{equation}
for any function $\phi \in C^1 ([0, T] \times \overline {\Omega};
C_{\#}^{\infty}(Y))$.
\end{theorem}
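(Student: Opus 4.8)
The plan is to follow the now-standard argument for two-scale convergence on periodic oscillating surfaces (in the spirit of \cite{NEUSS} and \cite{All}): realize the left-hand side of \eqref{c5} as the action of a family of uniformly bounded linear functionals on the test space, extract a weak-$*$ limit along a subsequence, and identify that limit through a Riesz representation. The cornerstone is a surface-measure convergence lemma, which I would establish first. Concretely, I would prove that for every $\psi \in C^1([0,T] \times \overline{\Omega}; C_{\#}^{\infty}(Y))$,
\begin{equation} \label{prop-surf}
\lim_{\epsilon \to 0} \epsilon \int_0^T \int_{\Gamma_{\epsilon}} \psi\Big(t,x,z,\tfrac{x}{\epsilon}\Big)\, dt\, d\sigma_{\epsilon} = \int_0^T \int_{\Omega} \int_{\Gamma} \psi(t,x,z,y)\, dt\, dx\, dz\, d\sigma(y).
\end{equation}

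Since $\Gamma_{\epsilon} = R_{\epsilon} \times {]0,L[}$ is cylindrical, the variables $z$ and $t$ act as parameters and it suffices to treat the two-dimensional cross-section $R_{\epsilon}$. I would split $\epsilon \int_{R_{\epsilon}}$ into a sum over the cells $\epsilon(k + Y) \subset D$; on the boundary $\epsilon(k + \Gamma)$ of a single cell the change of variables $x = \epsilon(k+y)$ with $y \in \Gamma$ gives $d\sigma_{\epsilon}(x) = \epsilon\, d\sigma(y)$ and $x/\epsilon \equiv y \pmod{\mathbb{Z}^2}$, so $Y$-periodicity turns each cell contribution into $\epsilon \int_{\Gamma} \psi(\epsilon(k+y), y)\, d\sigma(y)$. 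Summing, the prefactor combines into the cell area $\epsilon^2$ (recall $|Y| = 1$), producing exactly the Riemann sum $\sum_k \epsilon^2 \int_{\Gamma} \psi(\epsilon k, y)\, d\sigma(y)$ for $\int_D \int_{\Gamma} \psi(x,y)\, dx\, d\sigma(y)$; the $C^1$-regularity in the slow variable controls the replacement of $\psi(\epsilon(k+y), \cdot)$ by $\psi(\epsilon k, \cdot)$ uniformly, while the cells truncated by $\partial D$ form a boundary layer whose cross-sectional length is $O(\epsilon^{-1})$ and which therefore contributes $O(\epsilon)$ after the $\epsilon$-scaling.

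With \eqref{prop-surf} in hand I would set $\mu_{\epsilon}(\phi) := \epsilon \int_0^T \int_{\Gamma_{\epsilon}} v^{\epsilon}(t,x)\, \phi(t,x,\tfrac{x}{\epsilon})\, dt\, d\sigma_{\epsilon}$ for $\phi \in C^1([0,T] \times \overline{\Omega}; C_{\#}^{\infty}(Y))$. By Cauchy--Schwarz and hypothesis \eqref{c4},
$$|\mu_{\epsilon}(\phi)| \le \Big(\epsilon \int_0^T \int_{\Gamma_{\epsilon}} |v^{\epsilon}|^2\, dt\, d\sigma_{\epsilon}\Big)^{1/2} \Big(\epsilon \int_0^T \int_{\Gamma_{\epsilon}} \big|\phi(t,x,\tfrac{x}{\epsilon})\big|^2\, dt\, d\sigma_{\epsilon}\Big)^{1/2} \le \sqrt{C}\, \Big(\epsilon \int_0^T \int_{\Gamma_{\epsilon}} \big|\phi(t,x,\tfrac{x}{\epsilon})\big|^2\, dt\, d\sigma_{\epsilon}\Big)^{1/2}.$$
Applying \eqref{prop-surf} to $\psi = |\phi|^2$ (again of the admissible class) shows that the last factor converges to $\|\phi\|_{L^2([0,T] \times \Omega \times \Gamma)}$, so $\{\mu_{\epsilon}\}$ is a uniformly bounded family. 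Because $C^1([0,T] \times \overline{\Omega}; C_{\#}^{\infty}(Y))$ is dense in the Hilbert space $\mathcal{H} := L^2([0,T] \times \Omega; L^2(\Gamma))$ (any smooth function on $\Gamma$ extends to a smooth $Y$-periodic one), a weak-$*$/diagonal extraction yields a subsequence along which $\mu_{\epsilon}(\phi) \to \mu(\phi)$ for every test $\phi$, with $|\mu(\phi)| \le \sqrt{C}\, \|\phi\|_{\mathcal{H}}$. Thus $\mu$ extends to a bounded linear functional on $\mathcal{H}$, and the Riesz representation theorem furnishes a unique $v_0 \in L^2([0,T] \times \Omega; L^2(\Gamma))$ with $\mu(\phi) = \int_0^T \int_{\Omega} \int_{\Gamma} v_0\, \phi$, which is precisely \eqref{c5}.

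The main obstacle I anticipate is the careful proof of \eqref{prop-surf}: controlling the Riemann-sum error uniformly genuinely requires the $C^1$ dependence on the slow variables, and one must dispose cleanly of the partial cells cut by $\partial D$, whose total surface measure is $O(\epsilon^{-1})$ and hence negligible only after the $\epsilon$-weighting. A secondary but essential point is the density of $C^1([0,T] \times \overline{\Omega}; C_{\#}^{\infty}(Y))$ in $\mathcal{H}$, needed to pass from the test space to all of $\mathcal{H}$ in the Riesz step; this is classical but should be invoked explicitly.
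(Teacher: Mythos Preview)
The paper does not actually prove Theorem~\ref{tC.6}: it is stated in Appendix~C without proof, as one of several results on two-scale convergence recalled from the literature (in particular \cite{NEUSS} and \cite{All}). Your proposal reproduces the standard argument from those references---surface-measure Riemann-sum lemma, Cauchy--Schwarz bound via \eqref{c4}, diagonal extraction on a countable dense family, and Riesz representation---and is correct in outline; since the paper offers no proof of its own, there is nothing further to compare.
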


\end{document}